\documentclass[11pt, oneside, reqno]{amsart}  	

\usepackage[colorinlistoftodos]{todonotes}
\usepackage[T1]{fontenc}
\usepackage[utf8]{inputenc}

\usepackage{graphicx}

\usepackage{parskip}

\usepackage{geometry}                		
\usepackage{amssymb}
\usepackage{amscd, xcolor, changepage,cancel, mathtools}
\usepackage{faktor}
\usepackage{bbold}
\usepackage{comment}
\usepackage{centernot}

\usepackage{microtype}
\usepackage{xurl}

\usepackage[breaklinks=true]{hyperref}
\hypersetup{
    colorlinks=true,
    linkcolor=blue,
    citecolor=blue,
    filecolor=blue,      
    urlcolor=blue,
}
\usepackage[most]{tcolorbox}

\usepackage[backend=bibtex,style=alphabetic,sorting=anyt]{biblatex} 
\makeatletter
\def\@tocline#1#2#3#4#5#6#7{\relax
  \ifnum #1>\c@tocdepth 
  \else
    \par \addpenalty\@secpenalty\addvspace{-5pt}%
    \begingroup \hyphenpenalty\@M
    \@ifempty{#4}{%
      \@tempdima\csname r@tocindent\number#1\endcsname\relax
    }{%
      \@tempdima#4\relax
    }%
    \parindent\z@ \leftskip#3\relax \advance\leftskip\@tempdima\relax
    \rightskip\@pnumwidth plus4em \parfillskip-\@pnumwidth
    #5\leavevmode\hskip-\@tempdima
      \ifcase #1
       \or\or  \hskip 1em \or \hskip 2em \else \hskip 3em \fi%
      #6\nobreak\relax
    \hfill\hbox to\@pnumwidth{\@tocpagenum{#7}}\par
    \nobreak
    \endgroup
  \fi}
\makeatother

\definecolor{dimgray}{rgb}{0.41, 0.41, 0.41}
\definecolor{afb}{rgb}{0.36, 0.54, 0.66}
\definecolor{bncs}{rgb}{0.0, 0.53, 0.74}
\definecolor{tangerine}{rgb}{0.95, 0.52, 0.0}
\definecolor{nice_green}{rgb}{0.0, 0.5, 0.0}
\definecolor{teal}{rgb}{0.0, 0.5, 0.5}

\definecolor{bdf}{rgb}{0.19,0.55, 0.91}
\definecolor{blush}{rgb}{0.87, 0.36, 0.51}

\usepackage{scalerel} 
\DeclareMathOperator*{\Expectation}{\scalerel*{\mathbb{E}}{\textstyle\sum}}

\DeclareMathOperator{\sgn}{sgn}
\DeclareMathOperator{\supp}{supp}

\DeclareMathOperator{\Stab}{Stab}
\DeclareMathOperator{\diag}{diag}
\DeclareMathOperator{\off}{off diag}

\newcommand{\mS}{\mathcal{S}}

\newcommand{\E}{\mathbb{E}}
\newcommand{\F}{\mathbb{F}}
\newcommand{\C}{\mathbb{C}}
\newcommand{\N}{\mathbb{N}}
\newcommand{\Z}{\mathbb{Z}}
\newcommand{\Prob}{\mathbb{P}}

\newcommand{\la}{\langle}
\newcommand{\ra}{\rangle}

\newcommand{\norm}[2]{\left\| #1 \right\|_{#2}}

\newcommand{\ap}{\alpha}
\newcommand{\de}{\delta}

\newcommand{\ep}{\epsilon}

\newcommand{\wh}{\widehat}

\newcommand{\lp}{\left(}
\newcommand{\rp}{\right)}

\newcommand{\lf}{\left|}
\newcommand{\rf}{\right|}

\newcommand{\Ib}{I^{c}} 
\newcommand{\Sb}{S^{c}} 

\numberwithin{equation}{section}
\allowdisplaybreaks 

\newtheorem{theorem}{Theorem}[section]

\newtheorem{proposition}[theorem]{Proposition} 
\newtheorem{lemma}[theorem]{Lemma}
\newtheorem*{sameCharRestatement}{Lemma \ref*{lem:sameChar}}
\newtheorem*{diffCharRestatement}{Lemma \ref*{lem:diffChar}}

\theoremstyle{definition}
\newtheorem{definition}[theorem]{Definition}

\theoremstyle{remark}
\newtheorem{remark}[theorem]{Remark}

\title{Reasonable bounds for Restricted Polynomial Progressions in $\F^n_p$}

\author[A. Fragkos]{Anastasios Fragkos}
\address{Georgia Institute of Technology}
\email{anastasiosfragkos@gatech.edu}
\author[A. Burgin]{Alex Burgin}
\address{Georgia Institute of Technology}
\email{aburgin6@gatech.edu}
\author[D. Kruzel]{David Kruzel} 
\address{Georgia Institute of Technology}
\email{dkruzel3@gatech.edu}
\author[M. Lacey]{Michael Lacey} 
\address{Georgia Institute of Technology}
\email{lacey@math.gatech.edu}
\author[A. Stokolosa]{Amelia Stokolosa}
\address{Georgia Institute of Technology}
\email{astokolosa3@gatech.edu}
\begin{document}
\maketitle

\begin{abstract}
    For each integer $k\geq2$, we obtain parameters $M_k=\exp(O(k\log k))$ and $P_k=\exp\!\big(\exp(O(k^2\log k))\big)$ satisfying the following: for every prime $p\geq P_k$, we show that if $A \subseteq \F^n_p$ does not contain a restricted polynomial progression of the form 
    \begin{align*}
        x, x+\begin{pmatrix}
        y_1 \\ y_2 \\ \vdots \\ y_n
    \end{pmatrix}, x+\begin{pmatrix}
        y_1^2 \\ y_2^2 \\ \vdots \\ y_n^2
    \end{pmatrix}, \ldots, x+\begin{pmatrix}
        y_1^{k-1} \\ y_2^{k-1} \\ \vdots \\ y_n^{k-1}
    \end{pmatrix},
    \end{align*}
    where $x \in \F^n_p$ and $y \in \{0,2, 3, \ldots, M_k\}^n \backslash \{\vec{0} \}$, then $A$ must have density bounded above by $\frac{C}{(\log^{(k^{Ck})} n)^c}$, where $C, c>0$ only depend on $k$ and $p$. For $k\geq 4$, the best previously known bound came from the density Hales--Jewett theorem and was of inverse Ackermann type. The proof uses a recent inverse theorem for noise stability of Bhangale, Khot, Liu and Minzer, which arose in the study of constraint satisfaction problems. The main new ingredient is a proof that the underlying distribution admits no nontrivial abelian embedding. 
\end{abstract}


\tableofcontents

\section{Introduction}


\subsection{Polynomial Progressions} \ 

Bergelson and Leibman \cite{BL96} proved that every subset of the integers with positive upper density contains arbitrarily long polynomial progressions of the form $x, x+ P_1(y), \ldots, x+ P_k(y)$, where $y \in \Z \backslash \{0\}$ and $P_1, \ldots, P_k $ are integer polynomials with no constant term. Following Bergelson and Leibman's foundational work on the so-called polynomial Szemer\'edi theorem, considerable effort has been devoted to obtaining quantitative versions of this qualitative result. In the finite field setting, Bourgain and Chang \cite{Bourgain_Chang_17} first established a quantitative nonlinear Roth theorem using Fourier-analytic techniques and exponential sum estimates. They showed that a subset $A$ of a prime field $\F_p$ lacking $3$-term progressions of the form $x, x+y, x+y^2$ must have cardinality $|A| \ll p^{\frac{14}{15}}$. Subsequently, Peluse \cite{Pel_18_poly_roth} and Dong, Li, and Sawin \cite{DLS_20_poly_roth} obtained power savings bounds for more general $3$-term polynomial progressions $x, x+P_1(y), x+P_2(y)$ in $\F_p$, where $P_1, P_2 \in \Z[y]$ are linearly independent polynomials with zero constant terms. Both arguments rely on machinery borrowed from algebraic geometry. Given $k\geq 3$ linearly independent polynomials $P_1, \ldots, P_k$ vanishing at $0$, Peluse \cite{peluse_19} later proved power savings bounds for sets $A \subseteq \F_p$ lacking longer polynomial progressions of the form $x, x+ P_1(y), \ldots, x+P_k(y)$. More recently, Kuca \cite{Kuca_21} established a quantitative result for sets lacking polynomial progressions that are neither linearly independent, nor homogeneous of the same degree; namely, progressions of the form $x, x+y, \ldots, x+(m-1)y, x+y^m, \ldots, x+y^{m+k-1}$ for various choices of $k,m \in \N$. He later built upon a PET induction argument, a degree-lowering method, a quantitative concatenation result for averages of box norms over finite abelian groups, and a box norm smoothing technique to establish a multi-dimensional result in \cite{Kuca24_poly_gowers_ff, Kuca_multi_finite_distinct_24} extending a previously obtained power savings bound for sets lacking the polynomial configuration $(x_1, x_2), (x_1+ y, x_2), (x_1, x_2+ y^2)$ in $\F^2_p$ by Han, Lacey, and Yang in \cite{HanLaceyFanCorners}. 

In the integer setting, Prendiville \cite{Prendiville_homogeneous} first considered homogeneous polynomial progressions of the same degree. Together with Peluse \cite{Peluse_prend_nonlinear_roth, PP_nonlinear_roth_polylog}, it was shown that there exists $c>0$ such that if $A \subseteq [N]$ contains no non-trivial polynomial progression of the form $x, x+y, x+y^2$, then 
\begin{align*}
    |A| \ll \frac{N}{( \log N)^c}. 
\end{align*}
Peluse \cite{peluse_20} extended the result to longer progressions by showing that given polynomials $P_1, \ldots, P_m \in \Z[y]$ of distinct degrees with zero constant term, a set $A \subseteq [N]$ lacking polynomial progressions $x, x+ P_1(y), \ldots, x+ P_m(y)$ satisfies
\begin{align*}
    |A| \ll_{P_1, \ldots, P_m} \frac{N}{ (\log \log N)^{c_{P_1, \ldots, P_m} }}. 
\end{align*}
Shao and Wang \cite{shao_wang_2025_poly_log} recently improved the bound to 
\begin{align*}
    |A| \ll \frac{N}{( \log N)^c}. 
\end{align*}
Also recently, Leng \cite{Leng_24_complexity_one} obtained improved bounds for sets $A \subseteq \Z/ N\Z$ lacking $4$-term polynomial progressions of the form $x, x+ P(y), x+ Q(y), x+ P(y) + Q(y)$, where $P$ and $Q$ are linearly independent polynomials with $P(0) = Q(0) =0$. Leng showed that such sets have size $|A| \ll \frac{N}{\log^{(C)}N}$, for some absolute constant $C$. 

Related developments include corners with polynomial length \cite{KKL_24}, multidimensional polynomial patterns \cite{PPS_24_multi}, polynomial progressions in the primes \cite{Tao_Ziegler_08}, and polynomial progressions in topological fields \cite{KMPW_top_fields_24}. Other notable works on polynomial configurations include \cite{bergelson_best2025, peluse_sah_swawhney2023_shifted_roth, KKL_26_concatenation, Hong_Lim_25} and the references therein. 

\subsection{Restricted progressions} \

Bhangale, Khot, Liu, and Minzer \cite{CSP_6} recently resolved the \emph{restricted $3$-AP problem} in $\F^n_p$, which had appeared on Ben Green's list of open problems. A \textit{restricted $3$-AP} in $\F^n_p$ is a three-term arithmetic progression $x, x+a, x+2a$ with $x \in \F^n_p$ and $a \in \{0,1\}^n \backslash \{\vec{0}\}$. They show that if $A \subseteq \F^n_p$ has density $\frac{|A|}{p^n}\geq \frac{C_p}{(\log \log \log n)^{c_p}}$, then $A$ contains a restricted $3$-AP. 
To the best of our knowledge, the only previously known quantitative bounds came from the density Hales--Jewett theorem \cite{PolymathHalesJewett}, and they were of inverse Ackermann type, $\frac{|A|}{p^n} \leq O \lp \frac{1}{\sqrt{\log^* n}} \rp$. The result of \cite{CSP_6} was thus the first \emph{reasonable} bound for this problem, meaning one involving only a bounded number of iterated logarithms. In earlier work \cite{BKM-EffectiveRestricted}, the same group of authors obtained quantitative bounds for restricted progressions whose common difference lies in the larger set $\{0,1,2\}^n \backslash \{\vec{0} \}$. Shrinking the set of allowed differences to $\{0,1\}^n$ required an inverse theorem under weaker hypotheses, which was established in \cite{CSP_6}. Their argument rests on a stability result arising in the study of the complexity of constraint satisfaction problems. Other work on arithmetic progressions whose common difference is drawn from a sparse set includes that of Cook and Magyar \cite{Cook_Magyar_12}, who consider differences in an algebraic set given as a level set of a family of homogeneous polynomials, and the recent note of Conlon, Fox and Pham \cite{Conlon_Fox_Pham_restricted_2026} on arithmetic progressions with restricted differences.

In this paper we consider the polynomial analogue: progressions $x, x+y, x+y^2,\ldots,x+y^{k-1}$ in $\F_p^n$, where the powers are taken coordinatewise and the step $y$ is restricted to a product set $Y^n$ with $Y\subseteq \F_p$ a fixed finite set of small integers.

\subsection{Main result} \

Fix $k \geq 2$. Let $M(k) = \exp(O(k \log k ))$ and $P(k) = \exp \exp (O(k^2 \log k))$ be the functions provided by Proposition \ref{prop:uniformTorsionRank}, and set
\begin{align}\label{e;MkPk}
    M_k \coloneqq M(k-1), \qquad P_k \coloneqq \max\big\{P(k-1),\ M_k^{\,k-1}\big\}+1, \qquad Y \coloneqq \{0,2,3, \ldots, M_k\}.
\end{align}
These choices satisfy $M_k=\exp(O(k\log k))$ and $ P_k=\exp\!\big(\exp(O(k^2\log k))\big)$. 
Indeed, $M_k^{k-1}=\exp(O(k^2\log k))$, so the additional term in the definition of $P_k$ does not increase its stated growth rate.

Sections \ref{sec:reduction}--\ref{sec:diffChar} are phrased in terms of the largest power $\ell$ in the progression, which here is $\ell = k-1$. This is why $M$ and $P$ are evaluated at $k-1$. Let $p\geq P_k$ be prime. We say that a set $A\subseteq \F_p^n$ has a \textit{restricted $k$-term polynomial progression} if $A$ contains $k$ vectors
\begin{equation}
    x, \ x+ P_1 (y), \ldots, x+ P_{k-1}(y),
\end{equation}
where $P_j(y) = (y_1^j, \ldots, y_n^j)$ for $1 \leq j \leq k-1$ and $y \in Y^n \backslash \{\vec{0}\}$.

The choice of $Y$ is deliberate. The value $y=1$ is excluded because it collapses the progression: $x+1^j = x+1$ for every $j$, so a step with coordinates in $\{0,1\}$ produces only two distinct points. The value $y = 0$ is included for two reasons. First, the resulting diagonal tuples $(x,\ldots,x)$ in the support of the underlying distribution are what allow us to eliminate one of the functions when ruling out abelian embeddings. Second, freezing a coordinate by taking $y_i=0$ is what makes the configuration stable under passing to coordinate fibers (Lemma \ref{l;fiber}).

Define the iterated exponential and logarithm recursively by $\exp^{(0)} x \coloneqq x$, $\exp^{(j+1)} x \coloneqq \exp\big(\exp^{(j)} x\big)$, and $\log^{(0)} x \coloneqq x$, $\log^{(j+1)} x \coloneqq \log\big(\log^{(j)} x\big)$, so that $\log^{(j)}$ is the inverse of $\exp^{(j)}$. We use $\log^{(j)} x$ only for $x > \exp^{(j)}(1)$, which ensures that it is defined and exceeds $1$.

\begin{theorem}\label{main thm}
    Let $k\geq2$, and let $M_k=\exp(O(k\log k))$ and $P_k=\exp\!\big(\exp(O(k^2\log k))\big)$ be the parameters obtained from Proposition \ref{prop:uniformTorsionRank} as in \eqref{e;MkPk}. Set $Y=\{0,2,3,\ldots,M_k\}$.
    There is an integer $K=k^{O(k)}$ such that for every prime $p\geq P_k$ there exist $C=C(k,p)>0$ and $c=c(k,p)>0$ with the following property. If $n>\exp^{(K)}(1)$ and $A\subseteq\F_p^n$ satisfies
    \begin{align*}
        \frac{|A|}{p^n}\geq\frac{C}{\big(\log^{(K)}n\big)^c},
    \end{align*}
    then $A$ contains a nontrivial restricted $k$-term polynomial progression $x,x+P_1(y),\ldots,x+P_{k-1}(y)$ with $y\in Y^n\setminus\{\vec0\}$. Moreover, the $k$ points of any such progression are pairwise distinct.
\end{theorem}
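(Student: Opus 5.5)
We follow the Bhangale--Khot--Liu--Minzer density-increment template for restricted $3$-APs. The configuration is encoded as a probability distribution $\mu$ on $\F_p^{k}$: pick $x_i\in\F_p$ uniformly and $y_i\in Y$ uniformly, and output $(x_i, x_i+y_i, x_i+y_i^2,\ldots,x_i+y_i^{k-1})$. Set
\[
\Lambda(f_0,\dots,f_{k-1})=\E\prod_j f_j(x+P_j(y))
\]
over $\mu^{\otimes n}$, where $P_0=0$.

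\textbf{Step 1: properties of $\mu$.} Since $p\ge P_k> M_k^{k-1}$, the integers $y^j$ with $y\in Y$ and $1\le j\le k-1$ are distinct residues mod $p$, except when $y=0$. This gives the final claim of the theorem: for $y\neq\vec0$, some coordinate has $y_i\in\{2,\dots,M_k\}$, and then $y_i^a\ne y_i^b$ for $a\neq b$, so the $k$ points are pairwise distinct. Every marginal of $\mu$ on a single coordinate is uniform on $\F_p$. The support contains the diagonal (from $y=0$) and is connected in the sense required by BKLM.

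\textbf{Step 2: no abelian embedding.} This is the main obstacle. Suppose there are maps $\sigma_j:\F_p\to G$ into an abelian group with $\sum_j\sigma_j(a_j)=0$ on $\supp\mu$. The diagonal tuples give $\sum_j\sigma_j(x)=0$ for all $x$, which lets us eliminate $\sigma_0$. Since $x$ is free, this yields functional equations of the form
\[
\sum_{j\ge1}\big(\sigma_j(x+y^j)-\sigma_j(x)\big)=0 \quad\text{for all } x\in\F_p,\ y\in Y.
\]
Take Fourier transforms in $x$, working with characters of $G$ and reducing to $G$ cyclic. For each nontrivial frequency $\xi$ we get $\sum_j \hat\sigma_j(\xi)(e_p(\xi y^j)-1)=0$ for all $y\in Y$. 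This is a Vandermonde-type system in the unknowns $\hat\sigma_j(\xi)$. To force it to have only the trivial solution, we invoke Proposition~\ref{prop:uniformTorsionRank} with $\ell=k-1$: the set $\{2,\dots,M_k\}$ makes the relevant exponential matrix full rank, uniformly over $\xi$ and over torsion, as long as $p\ge P(k-1)$. We expect this uniformity to be exactly what that proposition supplies. The conclusion is that each $\sigma_j$ is constant, so only trivial embeddings exist. One must also handle $\Z$-valued and torsion embeddings separately; the BKLM notion covers both.

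\textbf{Step 3: inverse theorem and density increment.} With no abelian embeddings, the BKLM inverse theorem for noise stability applies. If $|\Lambda(1_A,\dots,1_A)-\alpha^k|$ is large, then some $f_j$ correlates with a low-degree function (a product of a function of low degree with a bounded-complexity piece) after random restriction. In addition, the count of trivial progressions ($y=\vec0$) is only $|Y|^{-n}$, which is negligible. Following the restricted-$3$-AP argument, we convert the correlation into a density increment of $\alpha\mapsto\alpha+\alpha^{O(1)}$ on a restriction of $A$ to a coordinate fiber or a subspace-like piece of dimension $n'\ge(\log n)^{c}$, or an iterated logarithm of $n$.

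\textbf{Step 4: iteration and bookkeeping.} Lemma~\ref{l;fiber} shows the configuration survives the restriction: frozen coordinates are handled by setting $y_i=0$. We then iterate the increment. The density can increase only $\alpha^{-O(1)}$ times, and each step costs a bounded number of logarithms depending on $k$. The losses in the inverse theorem depend on $k$ through the $k^{O(k)}$ appearing in the stability bounds, which gives $K=k^{O(k)}$. Solving the recursion yields the bound $C/(\log^{(K)}n)^c$.
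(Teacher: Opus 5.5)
Your architecture matches the paper's: a tensor-power distribution with diagonal atoms, absence of abelian embeddings, the Bhangale--Khot--Liu--Minzer inverse theorem, increments on coordinate fibers, and iteration via Lemma~\ref{l;fiber}. There are, however, two genuine gaps.

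\textbf{Step 2 misses the same-characteristic case.} Proposition~\ref{prop:uniformTorsionRank} only concerns $\lambda\in\overline{\mathbb{F}}_q^\times$ of order $p$ with $q\neq p$, so embeddings into $\mathbb{Z}/p\mathbb{Z}$ are not covered. Your Fourier method cannot cover them: in characteristic $p$ one has $T^p-1=(T-1)^p$, so there are no nontrivial $p$-th roots of unity and translation is unipotent rather than diagonalizable. This is exactly the case where the nontrivial embeddings $f_j(x)=c_jx$ of Remark~\ref{r;sharpM} live. The paper treats it separately in Lemma~\ref{lem:sameChar}:
\begin{itemize}
\item in $\mathbb{F}_p[T]/(T^p-1)$, write $T^{y^j}-1=(T-1)B_{y,j}(T)$ with $B_{y,j}(1)=y^j$;
\item the determinant of $(B_{r+1,j}(T))_{1\le r,j\le k-1}$ maps under $T\mapsto1$ to $\det((r+1)^j)$, a product of integers in $\{1,\dots,M_k\}$, which is nonzero mod $p$ (this uses $M_k\ge k$ and $p>M_k$);
\item since $T-1$ is nilpotent, this determinant is a unit, and the adjugate gives $(T-1)f_j=0$.
\end{itemize}
Two related points also need attention:
\begin{itemize}
\item The reduction from an arbitrary abelian group to $\mathbb{Z}/q\mathbb{Z}$ with $q$ prime needs proof (Lemma~\ref{lem:cyclicReduction}): reduce $\mathbb{Z}$-valued embeddings modulo a large prime, and pass from $\mathbb{Z}/q^n\mathbb{Z}$ to $\mathbb{Z}/q\mathbb{Z}$ by induction.
\item For $q\neq p$ the diagonalization must be done over $\overline{\mathbb{F}}_q$ with eigenfunctions $x\mapsto\lambda^x$. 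Complex exponentials $e_p(\xi y^j)$ are the wrong objects, since composing $\mathbb{Z}/q\mathbb{Z}$-valued maps with complex characters only yields multiplicative relations.
\end{itemize}
Separately, your Step 1 claim that $\operatorname{supp}\mu$ is connected is false for $k\ge3$: two distinct support points agree in at most one coordinate. It is also unnecessary, since Theorem~\ref{t;inverse} only requires the absence of abelian embeddings.

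\textbf{Steps 3--4 cannot produce the stated bound.} The increment supplied by Theorem~\ref{t;inverse} is $\delta_0=\exp(-\exp^{(K)}((\alpha/C_k)^{-b}))$, not $\alpha^{O(1)}$. Subspace-like pieces need not preserve the restricted configuration, so only coordinate fibers can be used. More seriously, if each of your iterations cost even one logarithm of the dimension, you would only get $n$ bounded by a tower whose height grows with $1/\alpha$, i.e.\ a $\log^{*}$-type bound. This holds with your $\alpha^{-O(1)}$ iterations, and the true count is about $1/\delta_0$. The missing point is that each step must lose only a constant factor of dimension:
\begin{itemize}
\item expand $1_A=\alpha+f_A$ to obtain a correlation in which the balanced function $f_A$ appears (Lemma~\ref{l;large exp});
\item deduce $\Stab_{1-\delta_0}(f_A)>\delta_0$, hence $W_{\le d}(f_A)\ge\delta_0/2$ with $d\approx\delta_0^{-1}\log(2/\delta_0)$ (Lemma~\ref{l;stability});
\item restrict randomly, keeping each coordinate with probability $1/d$, to get a fiber of dimension at least $n/(2d)$ and density at least $\alpha+\delta_0/32$ (Lemma~\ref{p;restriction}).
\end{itemize}
After at most $32/\delta_0$ steps this gives $n\le(2d)^{32/\delta_0}N(\alpha_0)\le\exp(C\delta_0^{-2})$. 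So all $K$ iterated logarithms come from the single function $\delta_0(\alpha_0)$, and the iteration adds only two more. You must also handle the unquantified dimension threshold $n_*$ of the inverse theorem; the paper removes it by padding with dummy coordinates (Proposition~\ref{p;allDimensions}).
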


For $k=2$ and $k=3$, bounds at least this strong were already available (see Section \ref{sec:existing}). The new content of Theorem \ref{main thm} is the range $k\geq 4$. There, to our knowledge, the best previously available bound came from the density Hales--Jewett theorem and was of inverse Ackermann type. Theorem \ref{main thm} replaces it with a bound involving $k^{O(k)}$ iterated logarithms.

\begin{remark}[The conditions on $p$ and $M_k$]\label{r;conditions}
    We record where each condition in \eqref{e;MkPk} is used.
    \begin{enumerate}
        \item[(a)] $M_k = M(k-1)\geq k$ (by the parameter choice in the quantitative proof of Proposition \ref{prop:uniformTorsionRank}) and $p>P(k-1)$. Together with $p>M_k$ below, these are the hypotheses of Proposition \ref{prop:noAbelianEmbeddings}, applied with $\ell = k-1$. The condition $M_k \geq k$ is used in the equal-characteristic case (Lemma \ref{lem:sameChar}), and $p>P(k-1)$ in the case of different characteristic (Lemma \ref{lem:diffChar}).
        \item[(b)] $p>M_k$, which follows from $p > M_k^{k-1}$ since $M_k\geq k\geq 2$ by (a). The elements of $Y$ are then distinct modulo $p$. Hence the tuples $(x,x+y,\ldots,x+y^{k-1})$, $x\in\F_p$, $y\in Y$, are pairwise distinct, as used in the computation of the atoms and marginals of $\mu$ in Lemma \ref{l;large exp}. The same condition makes the Vandermonde determinant in Lemma \ref{lem:sameChar} nonzero in $\F_p$.
        \item[(c)] $p>M_k^{k-1}$. For $y\in\{2,\ldots,M_k\}$ we have $1<y<y^2<\cdots<y^{k-1}<p$, so $0, y, y^2,\ldots,y^{k-1}$ are distinct in $\F_p$. If $y\in Y^n\setminus\{\vec 0\}$, some coordinate has $y_i\in\{2,\ldots,M_k\}$. Comparing the points $x, x+P_1(y),\ldots,x+P_{k-1}(y)$ in that coordinate shows that they are pairwise distinct. Without this condition a ``nontrivial'' progression could collapse to fewer than $k$ points.
    \end{enumerate}
\end{remark}

\subsection{Standard methods are not well suited to this question} \

The standard Gowers norm approach to polynomial configurations begins with repeated applications of a van der Corput inequality, leading to a  suitable Gowers or box norm of one of the functions involved. 
This strategy relies heavily on averaging over a full set of shifts, which is far from available in the restricted case.
In some settings, one can appeal to a normalized majorant satisfying appropriate pseudorandomness conditions, which must generally be established by separate analytic estimates. In our setting, however, the normalized weight associated with the product restriction $y\in Y^n$ is very far from being pseudorandom.
Its Cartesian-product structure produces a Fourier transform of Riesz product type, which has large Fourier coefficients supported on only a few coordinates, uniformly in the dimension $n$. Concretely, the weight is $\nu(y):=(p/|Y|)^n\mathbf{1}_{Y^n}(y)$, normalized so that $\mathbb{E}_{y\in \mathbb{F}_p^n}\nu(y)=1$, and $\widehat{\nu}$ factors as $\widehat{\nu}(\xi)=\prod_{i=1}^n \widehat{\nu_0}(\xi_i)$ with $\widehat{\nu_0}(\xi_i):=\frac{1}{|Y|}\sum_{t\in Y}e_p(-\xi_i t)$. As $Y$ is a nonempty proper subset of $\mathbb{F}_p$, the indicator $\mathbf{1}_Y$ is not constant, so $c:=|\widehat{\nu_0}(a)|>0$ for some $a\neq 0$. Testing the frequencies $\xi^{(i)}:=ae_i$, for which $|\widehat{\nu}(\xi^{(i)})|=c$, gives
\begin{align*}
    \|\nu-1\|_{U^2}^4=\sum_{\xi\neq 0}|\widehat{\nu}(\xi)|^4\geq \sum_{i=1}^n |\widehat{\nu}(\xi^{(i)})|^4 =nc^4,
\end{align*}
which grows with $n$.

Moreover, suppose one could establish and apply a suitable inverse theorem for the Gowers norms. Such inverse theorems, in the discrete and continuous cases, deduce that a function correlates with a non-classical polynomial, or a nilsequence. 
 A density increment is obtained on a subspace or a level set of a polynomial. 
But such sets need not preserve the restricted configurations. 
By contrast, the distribution defining our configurations is a tensor power $\mu^{\otimes n}$, and we use an inverse theorem adapted to this tensor-product structure. Once one knows that $\mu$ admits no nontrivial abelian embedding, this theorem yields a density increment on a coordinate fiber. Such fibers preserve the form of the restricted configuration, and therefore allow the argument to be iterated.

\subsection{The inverse theorem of Bhangale, Khot, Liu and Minzer} \label{sec:inverseThm}

Our proof of Theorem \ref{main thm} relies on an inverse theorem of Bhangale, Khot, Liu and Minzer \cite[Theorem 1]{CSP_7}. It is part of a larger research program \cite{CSP-I, CSP-II, CSP-III, CSP_4, CSP-V, CSP_6} on the complexity of satisfiable \emph{constraint satisfaction problems} (CSPs), with a recent survey \cite{minzer2026lensabelianembeddings}. 
Briefly, the subject is this. 
Let $\mathcal{P}$ be a collection of predicates $P:\Sigma^m \rightarrow \{0,1\}$ on a finite alphabet $\Sigma$. An instance of $\mathcal{P}$-CSP consists of variables $x_1,\ldots,x_N$ and constraints, each of which applies some $P \in \mathcal{P}$ to an $m$-tuple of the variables. The instance is satisfiable if some assignment of values in $\Sigma$ to the variables satisfies every constraint. A basic algorithmic question is to decide whether an instance is satisfiable, or to determine the largest fraction of its constraints that can be satisfied simultaneously. Understanding when this fraction can be approximated efficiently for satisfiable instances leads, through dictatorship testing, to questions about correlations $\E_{\mu^{\otimes n}}\prod_i f_i(x_i)$ of functions under product distributions. Theorem \ref{t;inverse} is an answer to one such question. The theorem applies to a notably broad class of distributions $\mu$, extending earlier work of Mossel \cite{Moss10} on so-called \emph{connected} distributions.\footnote{A distribution $\mu$ is said to be \textit{connected} if the graph with vertex set $\supp \mu$, in which two vertices are adjacent whenever they differ in exactly one coordinate, is connected.}

To state the inverse theorem, we require the concept of an abelian embedding.

\begin{definition}\label{d;abelianEmbedding}
    Let $\Sigma_1,\ldots , \Sigma_k$ be finite alphabets and $\mS\subseteq \prod_{i=1}^k \Sigma_i$. For an abelian group $(G,+)$, we say that maps $\sigma_i:\Sigma_i\rightarrow G$, $i=1,\ldots,k$, form an \textit{abelian embedding} of $\mS$ if \begin{align*}
        (a_1,\ldots,a_k)\in \mS\implies \sum_{i=1}^k\sigma_i(a_i)=0_G.
    \end{align*} The embedding is \textit{trivial} if every $\sigma_i$ is constant, and \textit{nontrivial} otherwise. A distribution $\mu$ over $\prod_{i=1}^k\Sigma_i$ \textit{admits a nontrivial abelian embedding} if its support $\supp\mu$ has a nontrivial abelian embedding into some abelian group, and \textit{admits no nontrivial abelian embedding} otherwise.
\end{definition}

\begin{theorem}[{\cite[Theorem 1]{CSP_7}}]
\label{t;inverse}
Let \( k \) be a positive integer and let \( \mu \)  be a distribution over
\( \Sigma_1 \times \cdots \times \Sigma_k \) that admits no nontrivial abelian embedding, and in which the probability of each atom is at least \( \gamma \).
 Then, for every \( \epsilon>0 \)   there are \( \delta\coloneqq \delta(\gamma,\epsilon) >0 \) and \( n_{\ast} = n_{\ast}(\gamma,\epsilon) \)
  such that for every \( n\geq n_{\ast} \), if 1-bounded
functions \( f_i \colon \Sigma_{i}^n \to \mathbb{C} \)  satisfy
\begin{equation} \label{B;bigger}
    \lf \Expectation_{(x_1, \  \ldots, \  x_k) \sim \mu^{\otimes n}}
    \prod_{i=1}^k f_i(x_i) \rf \geq \epsilon,
\end{equation}
then \(\textup{Stab}_{1-\delta} (f_i) > \delta\) for all \( i=1, \ldots,k \), where the stability of \( f_i \) is taken with respect to the marginal \( \mu_i \) of \( \mu \) on \( \Sigma_i \).
 Quantitatively, we can take $k^{O(k)}$ iterates of the exponential function; that is,
\begin{equation}
    \delta \coloneqq \exp(- \exp^{(k^{O(k)})}(\epsilon^{-O_\gamma(1)}) ).
\end{equation}
\end{theorem}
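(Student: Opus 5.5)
Since Theorem \ref{t;inverse} is quoted from \cite{CSP_7}, in the paper I would simply invoke it. If I had to reconstruct the argument, I would follow the strategy of the Bhangale--Khot--Liu--Minzer program: argue by contrapositive. Assume some $f_{i_0}$ has $\Stab_{1-\delta}(f_{i_0})\le\delta$, so that essentially all of its $L^2(\mu_{i_0}^{\otimes n})$-mass lies on high-degree Efron--Stein components. The goal is then to show that the correlation in \eqref{B;bigger} is $<\epsilon$. By symmetry it suffices to treat one index $i_0$.

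The first step is a reduction from arbitrary $\mu$ to a structured one. I would use the ``path trick''/merging of coordinates: repeatedly apply Cauchy--Schwarz in all but one variable and tensor $\mu$ with itself along a path. This replaces $\mu$ by a distribution $\mu'$ with the following properties:
\begin{enumerate}
\item it has bounded alphabet size and atom probabilities bounded below by $\gamma^{O_k(1)}$;
\item its support is closed under a certain ``connecting'' operation;
\item it still admits no nontrivial abelian embedding.
\end{enumerate}
Each Cauchy--Schwarz step costs a power of $\epsilon$. The nesting of $k$ such steps, repeated in an induction on $k$, is what I expect to produce the $k^{O(k)}$-fold tower in the bound for $\delta$.

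The core step is a structure theorem for the symmetrized form. If $|\E_{\mu^{\otimes n}}\prod_i f_i|\ge\epsilon$, then after a random restriction (fixing a random set of coordinates to random values, so that high-degree parts survive with low-influence structure) the function $f_{i_0}$ should correlate with a product of a low-degree function and a ``character-type'' function of the form $x\mapsto\prod_{j}\chi_j(\sigma_{i_0}(x_j))$. Here $\sigma_{i_0}$ is part of an abelian embedding of $\supp\mu'$ into some finite abelian group and $\chi_j$ are characters. I would prove this by induction on $k$. The base case is Mossel's invariance and hypercontractivity theorem \cite{Moss10} for connected distributions, which gives that the low-influence, high-degree part contributes negligibly. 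The inductive step would use a restriction inverse theorem: correlation forces structure on a random restriction, and a pigeonholing/Fourier argument over the finite group identifies the character component. Since $\mu$ (and hence $\mu'$) admits no nontrivial abelian embedding, every $\sigma_i$ is constant, so the character part is a constant phase. Consequently $f_{i_0}$ must correlate with a low-degree function on a random restriction, and averaging over restrictions gives $\Stab_{1-\delta}(f_{i_0})>\delta$, which is a contradiction.

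The main obstacle is this inductive restriction inverse theorem. Specifically, one has to show that the only obstructions to high-degree cancellation in a non-connected, embedding-free $\mu$ are products of low-degree functions with embedding characters, with bounds that survive the iteration. I would first try to handle it via a ``relaxed base case'': reduce $\mu'$ to a pairwise-connected distribution on two merged coordinates, where the obstruction is explicitly a character of the coset structure of the support. I would then lift this back through the path trick, tracking the loss at each level to get the stated tower-type $\delta$.
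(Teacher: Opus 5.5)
Your proposal matches the paper: the paper does not prove Theorem~\ref{t;inverse} at all, but imports it as a black box from \cite[Theorem 1]{CSP_7}, which is exactly what you propose to do. The reconstruction you sketch is not needed, and it would not stand on its own as a proof, because its core step is precisely the content of \cite{CSP_7}: namely the inductive restriction inverse theorem, together with your assertion, made without proof, that the merged distribution $\mu'$ still admits no nontrivial abelian embedding.
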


Three features of Theorem \ref{t;inverse} make it well suited to our problem. First, its only structural hypothesis is the absence of abelian embeddings, a purely algebraic condition on $\supp\mu$. No pseudorandomness of a majorant is needed, which sidesteps the obstruction described above. Second, its conclusion concerns noise stability, a notion defined coordinate by coordinate. Noise stability converts directly into a large Efron--Stein weight at low degree, and then, by a random restriction, into a density increment on a coordinate fiber (Lemmas \ref{l;stability} and \ref{p;restriction}). Such fibers are exactly the substructures on which our configuration reproduces itself. Third, the bounds are reasonable: they involve a number of iterated exponentials depending only on $k$.

Stability results from this program have found several combinatorial applications. Besides the restricted $3$-AP problem \cite{CSP_6} discussed above, they yield reasonable bounds in the density Hales--Jewett theorem for combinatorial lines of length three \cite{bhangale2024reasonableboundscombinatoriallines}. The results of \cite{CSP-I, CSP-II, CSP-III, CSP_4, CSP-V} also contain related stability theorems for shorter, ternary distributions.

\subsection{Outline of the proof} \

The argument is a density increment on coordinate fibers.

\emph{Step 1: a counting distribution.} Let $\mu$ be the distribution on $\F_p^k$ that, with probability $\frac12$, outputs a uniformly random diagonal tuple $(x,\ldots,x)$, and otherwise outputs $(x,x+y,\ldots,x+y^{k-1})$ with $x\in\F_p$ and $y\in\{2,\ldots,M_k\}$ chosen uniformly.
 All marginals of $\mu$ are uniform. 
 A sample from $\mu^{\otimes n}$, on \( (\F_p^{n})^{k} \), is a restricted progression, and it is trivial exactly when every coordinate is diagonal, which has probability $2^{-n}$. 
 So if $A\subseteq \F_p^{n}$ has density $\ap$ and contains no nontrivial progression, then $\E_{\mu^{\otimes n}}\prod_{j=0}^{k-1} 1_A(x_j) = \ap 2^{-n}$, which is far below the value $\ap^k$, which would hold if \( A \) were selected at random. 
  Writing $1_A = \ap + f_A$ and expanding, some correlation of the form $\E\big[f_A(x_j)\prod_{i\neq j}f_i(x_i)\big]$ is at least $c_k\ap^k$ in absolute value (Lemma \ref{l;large exp}).

\emph{Step 2: no abelian embeddings (Sections \ref{sec:reduction}--\ref{sec:diffChar}).} To apply Theorem \ref{t;inverse}, we show that $\mu$ admits no nontrivial abelian embedding. A general reduction shows that it suffices to rule out embeddings $f_0,\ldots,f_{k-1}:\F_p\to\Z/q\Z$ with $q$ prime. The diagonal tuples force $f_0 = -\sum_{j\ge1}f_j$, and subtracting leaves one linear equation in the translation operator $T$ for each $y\in\{2,\ldots,M_k\}$. When $q=p$, the operator $T$ is unipotent, and a Vandermonde determinant shows that these equations force every $f_j$ to be constant. When $q\neq p$, we diagonalize $T$ and must show that a matrix $A_{\ell,M}(\lambda) = (\lambda^{y^j}-1)_{y,j}$ has full rank at every root of unity $\lambda$ of order $p$ in characteristic $q$. We give two proofs of this rank assertion, one qualitative and the other quantitative, both beginning in characteristic zero. The qualitative proof uses Kronecker's theorem and a Noetherian argument, while the quantitative proof uses a determinant estimate and a result of Dimitrov. Both then pass to finite characteristic using a B\'ezout identity and a separate degree argument for the exceptional characteristics. In the quantitative proof, a bound on the integer multiplier in the B\'ezout identity gives the required bound on $P_k$.

\emph{Step 3: from stability to a density increment (Section \ref{sec:proofMain}).} Theorem \ref{t;inverse} gives $\Stab_{1-\de}(f_A) > \de$ with $\de$ depending only on $\ap$, $k$ and $p$. Hence $f_A$ has Efron--Stein weight at least $\de/2$ at degree $d\approx \de^{-1}\log(1/\de)$. A random restriction then produces a coordinate fiber of dimension at least $n/(2d)$ on which $A$ has density at least $\ap + \de/32$.

\emph{Step 4: iteration.} Because $0 \in Y$, a fiber of a set without nontrivial progressions again has none (Lemma \ref{l;fiber}). The increment can therefore be iterated. Since densities cannot exceed $1$, there are at most $32/\de$ steps, and the dimension must fall below the threshold of Theorem \ref{t;inverse} before that happens. This bounds $n$ by a tower of height $k^{O(k)}$ in $1/\ap$, which is equivalent to Theorem \ref{main thm}.

\subsection{Existing quantitative results for this configuration}\label{sec:existing}

\subsubsection{The cases $k\leq 3$} For $k\in \{2,3\}$, results at least as strong as Theorem \ref{main thm} were already known.

\begin{itemize}
    \item For $k=2$, a density Hales--Jewett (DHJ) argument, described below for $k\geq 4$, applies. For $k=2$, DHJ reduces to Sperner's theorem, which has very good quantitative bounds, and yields \begin{align*}
        \frac{|A|}{p^n}\ll n^{-1/2}.
    \end{align*}

    \item For $k=3$, one can appeal directly to a ternary stability theorem of Bhangale, Khot, Liu and Minzer \cite[Theorem 3]{CSP_6}, the engine behind their solution of the restricted $3$-AP problem, in which the restricted digits are simply $\{0,1\}$. Its bound involves three iterated logarithms of $n$, rather than a tower of height $k^{O(k)}$.

    The hypothesis of \cite[Theorem 3]{CSP_6} is that the projection of the distribution onto each \emph{pair} of coordinates is connected. That is, for each pair of coordinates, the graph whose vertices are the support of the corresponding $2$-ary marginal, two vertices being adjacent when they differ in exactly one coordinate, is connected. This is weaker than connectedness of the support itself, in the sense of the footnote in Section \ref{sec:inverseThm}. The stronger property in fact fails here: no two points of the supports below differ in exactly one coordinate, since agreeing in two coordinates forces the two points to be equal.

    Let $p$ be odd and let $\nu$ be the uniform distribution on
    \begin{align*}
        R:=\{(x,x+t,x+t^2):x\in \mathbb{F}_p,\ t\in \{0,2\}\},
    \end{align*}
    which is contained in the support of the distribution $\mu$ of Section \ref{sec:proofMain}. A set with no nontrivial restricted progression with step in $Y^n$ has, in particular, none with step in $\{0,2\}^n$, so it suffices to apply \cite[Theorem 3]{CSP_6} to $\nu$.

    The projections of $\nu$ are connected. Project onto the first two coordinates, obtaining the support $\{(x,x) : x \in \F_p\}\cup\{(x,x+2) : x \in \F_p\}$. For each $x$, the vertices $(x,x)$ and $(x,x+2)$ differ only in the second coordinate, and $(x,x+2)$ and $(x+2,x+2)$ differ only in the first; hence $(x,x)$ and $(x+2,x+2)$ are joined by a path of length two. Since $p$ is odd, $2$ generates $(\F_p,+)$, so iterating this joins $(x,x)$ to $(y,y)$ for every $y$, and every remaining vertex $(x,x+2)$ is adjacent to $(x,x)$. The projection onto the second and third coordinates is the same graph after the change of variable $u=x+2$, and the projection onto the first and third coordinates is the analogous graph with $2$ replaced by $4$, which also generates $(\F_p,+)$ because $p$ is odd. Hence
    \begin{align*}
        \frac{|A|}{p^n}\ll \frac{1}{(\log \log \log n)^{c}}.
    \end{align*}

    It is the connectedness hypothesis that is used here, not the absence of abelian embeddings: the distribution $\nu$ does admit a nontrivial abelian embedding, for instance $f_0(x)=x$, $f_1(x)=-2x$, $f_2(x)=x$, as one checks on $(x,x,x)$ and on $(x,x+2,x+4)$. For the full step set $\{0,2,\ldots,M_3\}$ there is no nontrivial abelian embedding, by Proposition \ref{prop:noAbelianEmbeddings} with $\ell = 2$, but Theorem \ref{t;inverse} would then only give the weaker tower bound of Theorem \ref{main thm}.

\end{itemize}

\subsubsection{The case $k\geq 4$}
For $k\geq 4$, the best bound for this configuration known to us follows from the density Hales--Jewett theorem. It gives only a very weak, inverse Ackermann-type decay in the density, as explained in the introduction to \cite{PolymathHalesJewett}.
The argument is as follows. 
Let $S=\{0,2,2^2,\cdots, 2^{k-1}\}\subseteq \mathbb{F}_p$, with $p$ sufficiently large in terms of $k$, so that these elements are distinct. We view this as a list of $|S|$ symbols.  Take $A\subset \mathbb{F}_p^n $ of density $\alpha$. Since every $a\in A$ belongs to exactly $|S|^n$ of the translates $(z+S^n)_{z\in \mathbb{F}_p^n}$, averaging over $z\in \mathbb{F}_p^n$ gives that \begin{align*}
    p^{-n}\sum_{z\in \mathbb{F}_p^n}\frac{|A\cap (z+S^n)|}{|S|^n}=\alpha.
\end{align*} Consequently, there exists some $z\in \mathbb{F}_p^n$ such that \begin{align}\label{eq:intro1}
    |A\cap (z+S^n)|\geq \alpha |S|^n.
\end{align} Set \begin{align*}
    B:=\{s\in S^n:z+s\in A\}\subset S^n,
\end{align*} then by (\ref{eq:intro1}) $|B|/|S|^n\geq \alpha$. Let $\text{DHJ}(k,\alpha)$ denote the threshold given by Density Hales-Jewett: for $n\geq \text{DHJ}(k,\alpha)$, any set $B\subset S^n$ of density at least $\alpha$ contains a combinatorial line of length $k$ in $B$. This means, if $B$ has density at least $\alpha$, then there is a nonempty `wildcard set' $W\subset [n]$ and fixed symbols $s_i\in S$, $i\not\in W$, such that for every $t\in S$, the word $w(t)=(w(t)_1,\cdots,w(t)_n)$ with \begin{align*}
    w(t)_i=\begin{cases}
        t, & i\in W\\
        s_i & i\not\in W
    \end{cases} 
\end{align*} belongs to $B$. Thus, for $n\geq \text{DHJ}(k,\alpha)$, \begin{align}\label{eq:intro2}
    z+w(t)\in A,\quad \forall t\in S.
\end{align} We now define $x,y\in \mathbb{F}_p^n$ by \begin{align*}
    x_i:=\begin{cases}
        z_i,\quad i\in W\\
        z_i+s_i,\quad i\not\in W,
    \end{cases}\quad y_i:=\begin{cases}
        2,\quad i\in W\\
        0,\quad i\not\in W.
    \end{cases}
\end{align*} Then, since $W\neq \varnothing$, $y\in \{0,2\}^n\setminus \{\vec 0\}$. For $1\leq j\leq k-1$, \begin{align*}
    x_i+y_i^j=\begin{cases}
        z_i+2^j,& i\in W\\
        z_i+s_i,& i\not\in W
    \end{cases}
\end{align*} and so \begin{align*}
    x=z+w(0),\quad x+P_j(y)=z+w(2^j), \quad 1\leq j\leq k-1,
\end{align*} where $(P_j(y))_i:=y_i^j$. By (\ref{eq:intro2}), one has then that $x,x+P_1(y),x+P_2(y),\cdots, x+P_{k-1}(y)$ are all in $A$. This is an instance of our configuration, and so if $n\geq \text{DHJ}(k,\alpha)$ our configuration appears.

\subsection{Use of AI} 
Insights from conversations with generative AI were used to develop several aspects of the paper. This included developing ideas, testing hypotheses, advice on organization, positioning, and novelty, and assisting with revisions. It was used to make final edits of the paper. The authors take full responsibility for the content of the paper. 

This paper is formalized in Lean 4, by an AI agent,  up to two admits for Theorem \ref{t;inverse} 
of \cite{CSP_7}   and Theorem \ref{thm:dimitrov} from \cite{VD}. 
It can be inspected at  \href{https://github.com/lacey-gatech/restricted-polynomials-lean/}{this repository}.

\section{Notation and Background}\label{sec:notation}


We proceed to record standard concepts from the analysis of Boolean functions, which we will make use of repeatedly. We refer the interested reader to \cite{ODonnell} for a comprehensive exposition of the subject.

\emph{Conventions.} The letter $\mu$ is reserved for the joint distribution on a product $\Sigma_1\times\cdots\times\Sigma_m$ of alphabets, such as the distribution detecting our configurations. A probability measure on a \emph{single} alphabet $\Sigma$ is denoted by $\pi$. In our application every $\Sigma_j$ is $\F_p$, and every marginal of $\mu$ is the uniform measure on $\F_p$ (Lemma \ref{l;large exp}), so $\pi$ will be the uniform measure. The letter $\ap$ always denotes the density of a set, and $T$ always denotes the translation operator $Tf(x)=f(x+1)$.

In what follows, $\Sigma$ is a finite set equipped with a probability measure $\pi$. A variable $x$ chosen according to $\pi$ is denoted $x\sim \pi$, and we write $x\sim \Sigma$ if $x$ is chosen uniformly from $\Sigma$. Expectations of functions \( f \colon \Sigma \to    \mathbb{C} \) are given by
\begin{equation}
    \mathbb{E}_\pi f = \sum_{x\in \Sigma} f(x) \pi(x).
\end{equation}

\begin{definition}
    For $0< \rho < 1$, the \textit{$\rho$-correlated distribution} on $(\Sigma, \pi)$ is defined as follows. For $x\in \Sigma$, we write $y \sim_{\rho} x$ to indicate that $y$ is randomly chosen as follows: with probability $\rho$ we set $y=x$, and with probability $1- \rho$ we draw $y \sim \pi$ independently of $x$. Given a function $f \colon \Sigma \to \mathbb{C}$, we set $N_ \rho f (x) \coloneqq \mathbb{E}_{y\sim_{\rho} x} f(y)$, the \textit{\( \rho   \)-noise operator}. On $\Sigma^n$ with the product measure $\pi^{\otimes n}$, we use the same symbol $N_\rho$ for the tensor product operator $N_\rho^{\otimes n}$, which applies the noise independently in each coordinate.
\end{definition}

\begin{definition}
    Let $0\leq \rho <1$. The \textit{$\rho$-stability} of $f \in L^2( \Sigma^n, \pi^{\otimes n}) $ is given by
    \begin{equation}
    \operatorname{Stab}_\rho(f) \coloneqq \langle f, N_ \rho f \rangle_{L^2(\pi^{\otimes n})}.
\end{equation}
    When $f$ is defined on $\Sigma_j^n$ for one of the alphabets of a joint distribution $\mu$, stability is always taken with respect to the marginal $\pi = \mu_j$ of $\mu$ on the $j$-th coordinate. This is the convention in Theorem \ref{t;inverse}.
\end{definition}

Abelian embeddings were defined in Definition \ref{d;abelianEmbedding}. We say that an abelian embedding is \textit{trivial} if all of its maps are constant, and that $\mu$ \textit{admits no nontrivial abelian embedding} if every abelian embedding of $\supp\mu$ into every abelian group is trivial.

We take the $n$-fold product $\Sigma^{n}$ with its product measure $\pi^{\otimes n}$. For a subset of indices $S\subseteq [n]$, we write $\Sigma ^S = \prod_{s\in S} \Sigma$ and $S^{c} \coloneqq [n] \setminus S$ for its complement; complements are always denoted by a superscript $c$.

\begin{definition}
  The \textit{Efron-Stein decomposition} of a function $f \colon \Sigma^n \to \mathbb{C}$
  with respect to $\pi^{\otimes n}$ is defined in terms of a decomposition of \( L^2(\Sigma^n, \pi^{\otimes n}) \).
  For \( S \subseteq [n] \), set  \( V_S \) to be all functions \( f \) which are of the form
  \( f(x) = g(x|_S) \), for some function \( g \colon \Sigma^S \to \mathbb{C} \).
  Then, define \(  V _{\subseteq S}  \) to be the span of \( \{ V_{S'} \colon S'\subseteq S\}\).
  The subspace \( V_{=S} = V_{\subseteq S} \cap \bigcap_{T'\subsetneqq S} V_{\subseteq T'} ^{\perp}   \).
  It can be shown that the spaces \( V_{=S} \) are pairwise orthogonal, and hence
  \( L^2(\Sigma^n)= \bigoplus _{S\subseteq [n]} V _{=S}   \).
  The orthogonal projection of \( f \) onto \( V_{=S}  \) is given by
 \begin{align}
    f^{=S}(x_S)  = \sum_{S'\subseteq S} (-1) ^{\lvert S\setminus S'\rvert} \mathbb{E}_{\pi^{\otimes (S')^{c}}} (f (y) \mid y|_{S'}=x_{S'}).
 \end{align}
 Here the conditional expectation fixes the coordinates in $S'$ and averages $f$ over the complementary coordinates $y|_{(S')^{c}}\sim \pi^{\otimes (S')^{c}}$.
 The \textit{Efron-Stein decomposition} of $f$ is then given by
 \begin{align*}
    f (x) = \sum_{\substack{S \subseteq [n] }} f^{=S} (x_S).
\end{align*}
In particular, $f^{=\emptyset}= \mathbb{E}f $.
\end{definition}

\begin{definition}
    Let $\Sigma$ be a finite set, $n \geq 1$, and let $\pi$ be a probability distribution on $\Sigma$. A function $f: \Sigma^n  \rightarrow \C$ is \textit{homogeneous of degree $d$} if $f = \sum_{S; |S| = d} f^{=S}$. In addition, the \textit{weight of $f$ at degree at most $d$} is defined by
\begin{equation}
    W_{\leq d} (f) \coloneqq \sum_{\lvert S\rvert\leq d} \lVert f^{=S} \rVert_2^2.
\end{equation}
\end{definition}

The weight of a function can also be described using a Fourier basis. Let $s=|\Sigma|$, and fix an orthonormal basis $\phi_0\equiv 1,\phi_1,\ldots,\phi_{s-1}$ of $L^2(\Sigma,\pi)$ whose first element is the constant function. For a \textit{multi-index} $\xi \in \{0,1,\ldots,s-1\}^n$, set $\phi_{\xi}(x)\coloneqq\prod_{i=1}^n \phi_{\xi_i}(x_i)$. These functions form an orthonormal basis of $L^2(\Sigma^n,\pi^{\otimes n})$. We write $\supp \xi =\{i: \ \xi_i \neq 0\}$ and $\# \xi = |\supp \xi|$. Then
\begin{align*}
    f^{=S} = \sum_{\substack{\xi:\ \supp \xi = S}} \wh{f}(\xi) \phi_{\xi}, \qquad \wh{f}(\xi) = \la f, \phi_{\xi} \ra .
\end{align*}
By Plancherel,
\begin{align}
    W_{\leq d} (f) = \sum_{\substack{\xi:\ \# \xi \leq d }} |\wh{f}(\xi)|^2.
\end{align}
In addition, the $\rho$-noise operator is given by \( N_ \rho f = \sum_ {S \subseteq [n]} \rho ^{\lvert S\rvert}  f^{=S}\).
This follows since $f^{=S}$ depends only on the coordinates indexed by $S$ and satisfies \(\E_i  f^{=S} =0 \) for every $i \in S$.

\section{Reduction to cyclic groups of prime order}\label{sec:reduction} 

 In order to apply the inverse theorem (Theorem \ref{t;inverse}) in our context, one must rule out abelian embeddings for the distribution $\mu$ on $\mathbb{F}_p^{\ell + 1}$ supported on points of the form
        \begin{equation} \label{restricted progression in one coordinate} (x, x+y, x+y^2,\ldots,x+y^{\ell})\end{equation}
    where $x \in \F_p$ and $y \in Y = \{0,2,\ldots,M\}$ for some $M < p$.\footnote{For the $k$-term progressions of Theorem \ref{main thm}, the largest power is $\ell = k-1$. Writing $\ell$ rather than $k$ for configurations of the form in formula \eqref{restricted progression in one coordinate} keeps the letter $k$ reserved for the number of terms.} The goal of Sections \ref{sec:reduction}--\ref{sec:diffChar} is the proof of the following proposition, which gives bounds on the sizes of $M$ and $p$ (depending on $\ell$) forbidding abelian embeddings of $\mu$: 

    \begin{proposition}\label{prop:noAbelianEmbeddings}
    Let $\ell\geq 1$, and let $M(\ell)=\exp(O(\ell\log\ell))$ and $P(\ell)=\exp\!\big(\exp(O(\ell^2\log\ell))\big)$ be the functions provided by Proposition \ref{prop:uniformTorsionRank}. Let $M \geq \max\{M(\ell), \ell+1\}$ be an integer and let $p > \max\{M, P(\ell)\}$ be prime. Let $\mu$ be a probability distribution on $\F_p^{\ell+1}$ whose support is exactly the set 
    \begin{align*}
        \Big\{(x,x+y,x+y^2,\hdots,x+y^\ell) : \ x\in \mathbb{F}_p,\ y\in \{0,2,\ldots, M\}\Big\}.
    \end{align*}
    Then $\mu$ admits no nontrivial abelian embedding.
\end{proposition}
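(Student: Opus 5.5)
We follow the strategy outlined in Step 2 of the introduction. Suppose $\sigma_0,\ldots,\sigma_\ell:\F_p\to G$ is an abelian embedding of the support. First we reduce to $G=\Z/q\Z$ with $q$ prime. We may assume $G$ is generated by the images, so it is finitely generated. If some $\sigma_j$ is nonconstant, say $\sigma_j(a)-\sigma_j(b)=g\neq 0$, choose a homomorphism $\chi:G\to\Z/q\Z$ with $\chi(g)\neq0$ for some prime $q$. Such a $\chi$ exists: pass to a cyclic quotient in which $g$ survives, and then to a prime-order quotient. This works for torsion-free $g$ by reducing $\Z$ mod a suitable prime, and for torsion $g$ by taking a $q$-primary component. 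Composing with $\chi$ gives a nontrivial embedding into $\Z/q\Z$. So it suffices to show that every embedding $f_0,\ldots,f_\ell:\F_p\to\Z/q\Z$ is trivial.

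Next we use the diagonal. The tuples with $y=0$ give $\sum_j f_j(x)=0$ for all $x$, so $f_0=-\sum_{j\ge1}f_j$. Subtracting this from the relation for a fixed $y\in\{2,\ldots,M\}$ gives
\begin{align*}
\sum_{j=1}^{\ell}\big(T^{y^j}-I\big)f_j=0 \qquad (y=2,\ldots,M),
\end{align*}
where $T$ is translation by $1$ on the $\F_q$-vector space of functions $\F_p\to\F_q$. We must show that this forces every $f_j$ to be constant. That suffices, since $f_0$ is then constant too.

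\emph{Case $q=p$.} Here $T^p=I$, so $N=T-I$ is nilpotent with $N^p=0$, and $\ker N$ consists of the constants. Expand $T^{m}-I=\sum_{r\ge1}\binom{m}{r}N^r$. Work with the filtration by powers of $N$. Suppose some $f_j$ is nonconstant, and consider the minimal $s\ge1$ such that some $N^{s}f_j=0$ fails appropriately; more precisely, look at the lowest-order terms. Leading-order analysis gives $\sum_j y^j\, N g_j\equiv 0$ modulo higher terms for all $y$. Since $M\ge \ell+1$ and $p>M$, the Vandermonde matrix $(y^j)_{y,j}$ with $y\in\{2,\ldots,\ell+1\}$ and $j\in\{1,\ldots,\ell\}$ is invertible mod $p$, so each leading term vanishes. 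An induction on the nilpotency filtration, peeling off one power of $N$ at a time, then yields $Nf_j=0$ for all $j$.

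\emph{Case $q\neq p$.} Now $T$ is diagonalizable over $\overline{\F_q}$, with eigenvalues $\lambda$ ranging over the $p$-th roots of unity. Decomposing each $f_j$ into eigencomponents, the component at $\lambda$ satisfies $\sum_j(\lambda^{y^j}-1)c_j=0$ for all $y$. At $\lambda=1$ this is vacuous, which corresponds to the constants. For $\lambda\ne1$ we need the $(M-1)\times\ell$ matrix $A_{\ell,M}(\lambda)=(\lambda^{y^j}-1)$ to have full column rank $\ell$. This is precisely what Proposition \ref{prop:uniformTorsionRank} provides: full rank at every nontrivial $p$-th root of unity in every characteristic $q\ne p$, once $M\ge M(\ell)$ and $p>P(\ell)$. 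Given that proposition, all nonconstant eigencomponents vanish and we are done.

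The main obstacle is the rank statement in the case $q\ne p$, but that is delegated to Proposition \ref{prop:uniformTorsionRank}. Within this proof, the delicate point is making the $q=p$ induction rigorous. The binomial expansion mixes orders in $N$, so one must choose the right filtration, for instance grading by $\deg$ as polynomials on $\F_p$ of degree $<p$. The hypothesis $p>M$ is what keeps both the Vandermonde determinant and the relevant binomial coefficients nonzero.
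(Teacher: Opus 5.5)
Your architecture matches the paper's: reduce to a prime-order cyclic group, then treat $q=p$ and $q\neq p$, with the latter delegated to Proposition \ref{prop:uniformTorsionRank} as in Subsection \ref{sec:rankReduction}. However, \textbf{the first step fails as written}.

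You claim that for any nonzero difference $g=\sigma_j(a)-\sigma_j(b)$ in a finitely generated abelian group $G$ there is a homomorphism $\chi\colon G\to\Z/q\Z$ with $\chi(g)\neq 0$. This is false for torsion elements lying in $qG$. In $G=\Z/4\Z$, every homomorphism to a group of prime order kills $g=2$. The $2$-primary component you propose to pass to is $\Z/4\Z$ itself, which does not have prime order.

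The situation genuinely occurs. Take the two-coordinate support $\{(a,a)\}$ and the maps $\sigma_0=1+2\phi$, $\sigma_1=3+2\phi$, where $\phi$ is a nonconstant $\{0,1\}$-valued function. These form a nontrivial embedding into $\Z/4\Z$, and their images generate $\Z/4\Z$. Yet all differences lie in $\{0,2\}$, so every composition $\chi\circ\sigma_i$ with a homomorphism $\chi$ to a group of prime order is constant. A nontrivial embedding into $\Z/2\Z$ does exist, namely $(\phi,\phi)$, but it is not obtained by composing with a homomorphism out of $G$.

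The fix is to normalize first.
\begin{itemize}
\item Fix $(a_0^0,\ldots,a_\ell^0)\in\supp\mu$ and replace $\sigma_i$ by $\sigma_i'=\sigma_i-\sigma_i(a_i^0)$. This is still an embedding, since $\sum_i\sigma_i(a_i^0)=0$.
\item Let $H$ be the subgroup generated by the values of the $\sigma_i'$. It is nonzero and finitely generated.
\item Take any surjection $\chi\colon H\to\Z/q\Z$. It is nonzero on some generator $\sigma_i'(a)$, while $\chi\circ\sigma_i'(a_i^0)=0$, so $\chi\circ\sigma_i'$ is nontrivial.
\end{itemize}
The paper achieves the same thing by the $q$-adic peeling in Case 2 of Lemma \ref{lem:cyclicReduction}. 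There it writes $f_j=c_j+q^n\phi_j$ and passes to $\phi_0+c,\phi_1,\ldots,\phi_\ell$, which again is not a composition with a homomorphism.

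For $q=p$, your filtration argument is only a sketch: ``the minimal $s$ such that \ldots{} fails appropriately'' is not a well-defined step. The idea is sound, though, and needs no induction.
\begin{itemize}
\item Identify functions $\F_p\to\F_p$ with polynomials of degree $<p$. Then $N=T-I$ strictly lowers degree and kills only constants.
\item If some $Nf_j\neq 0$, let $D$ be the largest degree among the $Nf_j$. In $\sum_j\sum_{r\geq1}\binom{y^j}{r}N^rf_j=0$, the terms with $r\geq 2$ have degree $<D$.
\item Comparing coefficients of $x^D$ gives $\sum_j y^j[Nf_j]_D=0$ for $y=2,\ldots,\ell+1$, where $[h]_D$ denotes the coefficient of $x^D$.
\item The Vandermonde determinant is nonzero mod $p$ because $p>M\geq\ell+1$. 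So every $[Nf_j]_D=0$, a contradiction.
\end{itemize}
The paper packages the same leading-term idea differently. It observes that $\det B(T)$ evaluates at $T=1$ to that Vandermonde determinant, hence is a unit in $\F_p[T]/(T-1)^p$, and then the adjugate kills $(T-1)f_j$.

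Your $q\neq p$ case is the paper's argument. The one omitted remark is that Proposition \ref{prop:uniformTorsionRank} is stated for $M=M(\ell)$, and a larger $M$ only appends rows, so full rank persists.
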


\begin{remark} 
    The proofs below use every value $y \in \{0,2,\ldots,M\}$: the value $y=0$ eliminates $f_0$, and the values $y \in \{2,\ldots,M\}$ supply the rows of the matrices $A_{\ell,M}$ that appear below.  The distribution $\mu$ of Lemma \ref{l;large exp} gives positive mass to every point of this set.  
\end{remark}

The next remark shows that the number of steps cannot be reduced: the hypothesis $M\geq \ell+1$ of Proposition \ref{prop:noAbelianEmbeddings} is necessary, and not an artifact of our argument.

\begin{remark}[Sharpness of $M \geq \ell+1$]\label{r;sharpM}
    Let $\ell \geq 1$, let $p$ be prime, and let $Y = \{0\}\cup Y'$ with $Y' \subseteq \F_p\setminus\{0\}$. Let $\mu$ be any distribution whose support is 
    \begin{align*}
        \Big\{(x,x+y,x+y^2,\hdots,x+y^\ell) : \ x\in \mathbb{F}_p,\ y\in Y\Big\}.
    \end{align*}
    If $\lvert Y'\rvert<\ell$, then $\mu$ admits a nontrivial abelian embedding into $(\F_p,+)$.

    Indeed, the linear system $\sum_{j=1}^\ell c_j y^j = 0$, $y \in Y'$, consists of fewer than $\ell$ equations in the $\ell$ unknowns $c_1,\ldots,c_\ell$, so it has a solution with some $c_j \neq 0$. Put
    \begin{align*}
        f_j(x) \coloneqq c_j x \quad (1\leq j \leq \ell), \qquad f_0(x) \coloneqq -\Big(\sum_{j=1}^\ell c_j\Big) x .
    \end{align*}
    For every $x \in \F_p$ and every $y \in Y$,
    \begin{align*}
        f_0(x) + \sum_{j=1}^\ell f_j(x+y^j) = -\Big(\sum_{j=1}^\ell c_j\Big)x + \sum_{j=1}^\ell c_j x + \sum_{j=1}^\ell c_j y^j = \sum_{j=1}^\ell c_j y^j,
    \end{align*}
    which vanishes for $y\in Y'$ by the choice of the $c_j$, and also for $y = 0$. The map $f_j(x) = c_jx$ with $c_j \neq 0$ is nonconstant, so the embedding is nontrivial.

    For the step sets $Y = \{0,2,\ldots,M\}$ of Proposition \ref{prop:noAbelianEmbeddings} we have $\lvert Y'\rvert = M-1$, so $M \geq \ell+1$ is necessary. By Lemma \ref{lem:sameChar} it is also sufficient, when $p>M$, to rule out embeddings into $(\F_p,+)$, so for the equal-characteristic case $M\geq \ell+1$ is exactly the right threshold. The larger value $M(\ell)$ is needed only to handle the remaining characteristics $q \neq p$.
\end{remark}

    In the remainder of this section we show that Proposition \ref{prop:noAbelianEmbeddings} reduces to the following two lemmas:

    \begin{lemma}\label{lem:sameChar}Let $\ell \geq 1$, let $M\geq \ell+1$, and let $p > M$ be prime. If $f_0,\hdots,f_\ell:\mathbb{F}_p\rightarrow \F_p$ satisfy
    \begin{align*}
        f_0 (x) + \sum_{j=1}^{\ell}f_j(x+y^j)=0 \qquad \text{for all } x \in \F_p \text{ and } y\in \{0,2,\hdots,M\},
    \end{align*}
    then $f_j$ is constant for every $j = 0,1, \ldots, \ell$. That is, $\mu$ admits no nontrivial abelian embedding into $(\F_p,+)$. \end{lemma}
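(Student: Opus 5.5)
The plan is to eliminate $f_0$ using the value $y=0$, rewrite the remaining identities in terms of the translation operator $T$, exploit that $T$ is unipotent over $\F_p$, and conclude with a Vandermonde determinant. Taking $y=0$ gives $f_0(x)+\sum_{j=1}^\ell f_j(x)=0$ for all $x$, so $f_0=-\sum_{j\ge1}f_j$. Substituting back, for each $y\in\{2,\ldots,M\}$ we obtain
\begin{align*}
    \sum_{j=1}^{\ell}\big(f_j(x+y^j)-f_j(x)\big)=0 \qquad \text{for all } x\in\F_p.
\end{align*}
Here $y^j$ is read as an integer shift, so $f_j(x+y^j)=(T^{y^j}f_j)(x)$ with $Tf(x)=f(x+1)$ acting on the $\F_p$-vector space $V$ of functions $\F_p\to\F_p$. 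Each identity is therefore $\sum_j (T^{y^j}-I)f_j=0$.

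Next we use that $T^p=I$, so over characteristic $p$ we have $(T-I)^p=T^p-I=0$, i.e.\ $N:=T-I$ is nilpotent with $N^p=0$. The kernel of $N$ is exactly the constant functions, and $V$ is filtered by the spaces $\ker N^r$, consisting of the polynomial functions of degree $<r$. Expanding $T^{m}=(I+N)^m=\sum_{r\ge0}\binom{m}{r}N^r$, the equations become
\begin{align*}
    \sum_{j=1}^\ell\Big(y^j N + \tbinom{y^j}{2}N^2+\cdots\Big)f_j=0, \qquad y\in\{2,\ldots,M\}.
\end{align*}
The goal is to show $Nf_j=0$ for all $j$. We argue by contradiction: let $r\ge1$ be maximal such that some $N^{r}f_j\neq0$, so $N^{r+1}f_j=0$ for every $j$. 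Apply $N^{r-1}$ to each equation. All terms involving $N^{s}$ with $s\ge2$ then land in $N^{r+1}f_j=0$, leaving
\begin{align*}
    \sum_{j=1}^\ell y^j\,(N^r f_j)=0 \qquad \text{for } y\in\{2,\ldots,M\}.
\end{align*}
Evaluating pointwise, the vector $(g_j)=(N^rf_j(x))_j\in\F_p^\ell$ satisfies $\sum_j y^j g_j=0$ for the $M-1\ge\ell$ values $y=2,\ldots,\ell+1$. These values are distinct and nonzero mod $p$ because $p>M$. The matrix $(y^j)_{y,j}$ restricted to these $\ell$ values of $y$ equals $\mathrm{diag}(y)$ times a Vandermonde matrix in distinct nodes, so it is invertible in $\F_p$. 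Hence $g=0$, so $N^rf_j=0$ for all $j$, contradicting the choice of $r$. Therefore every $f_j$ with $j\ge1$ is constant, and then so is $f_0=-\sum_j f_j$.

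The main point requiring care is the top-degree extraction: we must check that applying $N^{r-1}$ kills every higher binomial term. This holds because those terms involve $N^{r-1+s}f_j$ with $s\ge2$, which vanish by maximality of $r$. We must also check that the coefficients $y^j$ are read correctly mod $p$: $\binom{m}{1}=m$ is reduced mod $p$, which is fine since only the residue $y^j \bmod p$ enters. The only genuinely nontrivial input is the Vandermonde invertibility, and that is exactly where the hypotheses $M\ge\ell+1$ and $p>M$ are used.
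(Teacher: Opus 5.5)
Your proof is correct and follows essentially the same route as the paper. You eliminate $f_0$ with $y=0$, use that $N=T-\mathrm{Id}$ is nilpotent in characteristic $p$, and reduce to invertibility of $(y^j)$ for $y=2,\ldots,\ell+1$, which is exactly where $M\geq\ell+1$ and $p>M$ enter. The only difference is how invertibility at $T=1$ is lifted: the paper factors $T^{y^j}-\mathrm{Id}=(T-\mathrm{Id})B_{y,j}(T)$ and shows $\det B(T)$ is a unit in $\F_p[T]/(T^p-1)$ before applying the adjugate, whereas your extraction of the $N^r$ term at the maximal nilpotency level is the same argument in associated-graded form.
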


    \begin{lemma} \label{lem:diffChar}
    Let $\ell \geq 1$. There exist $M(\ell), P(\ell) \in \N$ for which the following holds. Let $M\geq M(\ell)$, let $p>P(\ell)$ be prime, and let $q\neq p$ be prime. If $f_0,\ldots,f_\ell:\F_p\to\F_q$ satisfy 
        \begin{align}\notag
        f_0 (x) + \sum_{j=1}^{\ell}f_j(x+y^j)=0 \qquad \text{for all } x \in \F_p \text{ and } y\in \{0,2,\hdots,M\},
    \end{align}
    then $f_j$ is constant for every $j=0,1,\ldots,\ell$. That is, $\mu$ admits no nontrivial abelian embedding into $(\F_q,+)$. Quantitatively, one can take $M(\ell)=\exp(O(\ell\log\ell))$ and $P(\ell)=\exp\!\big(\exp(O(\ell^2\log\ell))\big)$.
    \end{lemma}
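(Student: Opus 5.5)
Reading the statement as Lemma \ref{lem:diffChar} (the last one stated), I would follow the outline of the introduction.

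\emph{Step 1: eliminate $f_0$ and pass to an operator equation.} Taking $y=0$ gives $f_0=-\sum_{j\ge1}f_j$. Subtracting this from the equation for a general $y\in\{2,\ldots,M\}$ gives
\begin{align*}
\sum_{j=1}^\ell (T^{y^j}-I)f_j=0 \qquad \text{for every } y\in\{2,\ldots,M\},
\end{align*}
where $T$ is the translation operator on functions $\F_p\to\F_q$.

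\emph{Step 2: diagonalize $T$.} Since $q\neq p$, the polynomial $T^p-I$ is separable over $\overline{\F_q}$, so $T$ is diagonalizable there. Its eigenvectors are the characters $x\mapsto\lambda^x$, where $\lambda$ ranges over the $p$-th roots of unity in $\overline{\F_q}$. Expanding each $f_j=\sum_\lambda c_j(\lambda)\lambda^x$, the system splits frequency by frequency:
\begin{align*}
\sum_{j=1}^{\ell}(\lambda^{y^j}-1)\,c_j(\lambda)=0 \qquad \text{for every } y\in\{2,\ldots,M\}.
\end{align*}
At $\lambda=1$ the coefficients are unconstrained; these are the constant parts. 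So it suffices to show that the $(M-1)\times\ell$ matrix $A_{\ell,M}(\lambda)=(\lambda^{y^j}-1)_{y,j}$ has rank $\ell$ for every $\lambda\neq1$ with $\lambda^p=1$. Then $c_j(\lambda)=0$ for $\lambda\neq1$, each $f_j$ is constant, and therefore $f_0$ is constant as well.

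\emph{Step 3: characteristic zero.} First prove the rank statement for primitive $p$-th roots of unity $\zeta\in\C$ once $p$ is large. Since the rank is Galois-invariant, it is enough to show that some $\ell\times\ell$ minor $D(\zeta)$ is nonzero. Choose $\ell$ values of $y$, say $y\in\{2,\ldots,\ell+1\}$ or a better-spread subset among $\{2,\ldots,M\}$, and view $D(z)$ as a Laurent polynomial in $z$. If $D$ is not identically zero, then $D(\zeta)=0$ can only happen when the cyclotomic polynomial $\Phi_p$ divides $D$. This is impossible once $p$ exceeds the degree of $D$, which is at most about $\ell M^\ell$.

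Nonvanishing of $D$ as a polynomial can be checked at a generic point of the unit circle, or at a real point $z=t\to\infty$, where the leading monomial $t^{\sum y_i^{\sigma(i)}}$ is dominated by a unique permutation. By a rearrangement inequality, the maximal exponent $\sum_i y_i^{\sigma(i)}$ is attained uniquely by pairing the largest $y$ with the largest $j$, so $D\not\equiv0$. The quantitative version, giving $M(\ell)=\exp(O(\ell\log\ell))$, is where the paper invokes Dimitrov's result. My plan is instead to use this simple degree argument, accepting whatever $M$ it yields.

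\emph{Step 4: descend to characteristic $q$. This is the main obstacle.} Let $R(z)$ be the integer polynomial underlying $D$. Since $\Phi_p\nmid R$ over $\Q$, there is a B\'ezout identity
\begin{align*}
a\Phi_p+bR=N
\end{align*}
with $a,b\in\Z[z]$ and $N$ a nonzero integer. Bound $N$ through resultants: $|N|\le|\mathrm{Res}(\Phi_p,R)|\le \|R\|^{p-1}\cdot(\text{small})$. For every $q\nmid N$, the reduction of $R$ is nonzero at every $\lambda$ of order $p$.

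For the finitely many $q\mid N$, I would choose a different minor, or argue by degree that $R\bmod q$ is nonzero of degree less than $\mathrm{ord}$ of the obstruction. The problem is that $\lambda$ has degree $\mathrm{ord}_p(q)$ over $\F_q$, which could be small. The fix is to use many minors, hence requiring $M$ large, so that $\gcd$ over minors of the resultants involves only $q$ with small multiplicative order. Then handle those $q$ by making $p$ large relative to $\ell M^\ell$.

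Assembling the bounds gives $P(\ell)$ doubly exponential in $\ell^2\log\ell$.
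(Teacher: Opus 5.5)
Your Steps 1--2 match the paper's reduction: it suffices that $A_{\ell,M}(\lambda)$ have rank $\ell$ for every $\lambda\in\overline{\F}_q^\times$ of order $p$, uniformly over all primes $q\neq p$. There is a genuine gap in Steps 3--4.

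Your B\'ezout identity $a\Phi_p+bR=N$ has $N$ (essentially $\mathrm{Res}(\Phi_p,R)$) depending on $p$ and growing exponentially in $p$. So the exceptional primes $q\mid N$ are not confined to a set independent of $p$. They may have small $\mathrm{ord}_p(q)$, e.g.\ $q\equiv 1\pmod p$ so that $\lambda\in\F_q$, and then no degree argument applies. Your proposed fix points the wrong way. Small multiplicative order is exactly the hard case: such $q$ satisfy $q> p^{1/r}$, so they are large. Making $p$ large relative to $\deg R$ only handles the easy case $\mathrm{ord}_p(q)>\deg R$.

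In fact no single fixed minor can work. For $\ell=2$ and rows $y=2,3$ (that is, $M=\ell+1$, which is all your Step 3 uses),
\[ \det\begin{pmatrix} z^2-1 & z^4-1\\ z^3-1 & z^9-1\end{pmatrix}=z^2(z-1)^2(z+1)(z^2+z+1)(z^4+z-1). \]
The factor $g=z^4+z-1$ is irreducible, has no roots on the unit circle, and has Mahler measure $>1$. Hence $\mathrm{Res}(g,\Phi_p)\equiv\pm g(1)^{p-1}=\pm1\pmod p$, while $|\mathrm{Res}(g,\Phi_p)|$ grows exponentially in $p$. Any prime $q$ dividing it gives $\lambda\in\overline{\F}_q$ of order $p$ with $\overline{g}(\lambda)=0$. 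So $A_{2,3}(\lambda)$ is singular, and since the linear system is defined over $\F_q$, the conclusion of the lemma fails for $M=3$ at every large $p$. Your argument never uses that $M$ is large, so ``accepting whatever $M$ it yields'' yields a false statement.

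The missing idea is a statement independent of $p$. Choose $M$ so large that the monic gcd $D$ in $\mathbb{Q}[z]$ of \emph{all} $\ell\times\ell$ minors $\Delta_i$ of $A_{\ell,M}(z)$ has the form $z^a\prod_m\Phi_m^{e_m}$. This is what Kronecker's theorem (via $A_{\ell,\infty}$ and a Noetherian cutoff), or Lemma \ref{lem:smallMinors} plus Dimitrov's theorem, is for. Its role is to exclude non-cyclotomic common roots $\beta\in\overline{\mathbb{Q}}$ of all the minors, not to handle $\zeta_p$ in characteristic zero, which is trivial as you note. The example shows this property is essentially necessary: a non-cyclotomic factor of $D$ would produce bad pairs $(q,\lambda)$ for every large $p$ by the same resultant argument.

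Given this property, clearing denominators in B\'ezout gives $\sum_iV_i\Delta_i=BD$ with $V_i\in\mathbb{Z}[z]$ and an integer $B$ independent of $p$. If $q\nmid B$, a common root $\lambda$ of order $p$ satisfies $\overline{D}(\lambda)=0$. Then $\lambda^m=1$ for some $m$ with $\varphi(m)\leq\deg D$, forcing $p\leq\deg D+1$. If $q\mid B$, then $q\leq B$. Since the minors are monic (Lemma \ref{lem:minorsMonic}), $\lambda\in\F_{q^r}$ with $r\leq\deg\Delta_1$, so $p\leq B^{\deg\Delta_1}$. The explicit $P(\ell)$ then comes from bounding $B$ by Cramer's rule and Hadamard's inequality.
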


    The proof of Lemma \ref{lem:sameChar} will be the subject of Section \ref{sec:sameChar} and the proof of Lemma \ref{lem:diffChar} will be the subject of Section \ref{sec:diffChar}. 

    \begin{remark}
        Section \ref{sec:diffChar} develops two approaches to Lemma \ref{lem:diffChar}, with the same initial reduction to Proposition \ref{prop:uniformTorsionRank}. A qualitative argument establishes the existence of $M(\ell)$ and $P(\ell)$ while the second obtains the quantitative bounds stated above. 
    \end{remark}

The following lemma shows that one can reduce abelian embeddings to the case of cyclic groups of prime order, which is what allows us to reduce Proposition \ref{prop:noAbelianEmbeddings} to Lemmas \ref{lem:sameChar} and \ref{lem:diffChar}. 

\begin{lemma}\label{lem:cyclicReduction}
    Suppose $\mu$ is a probability distribution on a finite set $\Sigma_0  \times \Sigma_1 \times \cdots \times \Sigma_\ell$ which admits a nontrivial abelian embedding into some abelian group. Then there exists a prime $q$ for which $\mu$ admits a nontrivial abelian embedding into the cyclic group $(\Z/q\Z,+)$.
\end{lemma}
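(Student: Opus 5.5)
We reduce a nontrivial embedding into an arbitrary abelian group $G$ in three steps: first to a finitely generated group, then to a cyclic group, then to a cyclic group of prime order.

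\emph{Step 1: finitely generated groups.} Let $\sigma_i:\Sigma_i\to G$ be a nontrivial embedding of $\supp\mu$. Only the finitely many values $\sigma_i(a)$, $a\in\Sigma_i$, matter. Let $H\le G$ be the subgroup they generate. Then the $\sigma_i$ form a nontrivial embedding into $H$, and $H$ is finitely generated. By the structure theorem,
\begin{align*}
    H\cong \Z^r\oplus \bigoplus_{m} \Z/q_m^{e_m}\Z
\end{align*}
with $q_m$ prime.

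\emph{Step 2: cyclic groups.} Since some $\sigma_i$ is nonconstant, there are $a,b\in\Sigma_i$ with $h\coloneqq\sigma_i(a)-\sigma_i(b)\neq 0$. Some coordinate projection $\pi:H\to C$ onto one of the cyclic factors has $\pi(h)\ne 0$. Composing with a homomorphism preserves the defining relation $\sum_i\sigma_i(a_i)=0$ on $\supp\mu$. Hence $\pi\circ\sigma_i$ is an embedding into $C$, and it is nontrivial because $\pi\circ\sigma_i$ separates $a$ from $b$. So $C$ is either $\Z$ or $\Z/q^e\Z$.

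\emph{Step 3: prime order.} Now reduce to $\Z/q\Z$; this is the only point needing care.

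If $C=\Z$, pick a prime $q$ not dividing the nonzero integer $\pi(h)$ and compose with $\Z\to\Z/q\Z$. The image of $\pi(h)$ is still nonzero.

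If $C=\Z/q^e\Z$, reduction mod $q$ can kill $\pi(h)$. Instead, let $q^t$ be the largest power of $q$ dividing $\pi(h)$ for a chosen nonzero difference; we have $t<e$. We use the map $\Z/q^e\Z\to \Z/q\Z$ sending $z$ to the reduction mod $q$ of $z/q^t$, which is defined only on $q^t\Z/q^e\Z$. It does not apply directly, because the values $\sigma_i(c)$ need not lie in $q^t\Z/q^e\Z$.

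A simpler route avoids this. Choose $t$ minimal over \emph{all} nonzero differences $\sigma_i(a)-\sigma_i(b)$, over all $i$ and all $a,b$. Normalize each $\sigma_i$ by subtracting the constant $\sigma_i(c_i)$ for a fixed $c_i\in\Sigma_i$. The relation $\sum_i\sigma_i(a_i)=0$ then becomes $\sum_i\sigma_i'(a_i)=g_0$, where $g_0\coloneqq -\sum_i\sigma_i(c_i)$. Fix any $(a_i)\in\supp\mu$, nonempty since $\mu$ is a probability distribution. Then $g_0=\sum_i\sigma_i'(a_i)$, and each term $\sigma_i'(a_i)$ is a difference, so $g_0\in q^t\Z/q^e\Z$. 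Absorb $-g_0$ into $\sigma_0'$, which alters $\sigma_0'$ by a constant.

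All values now lie in $q^t\Z/q^e\Z\cong \Z/q^{e-t}\Z$. The map $\psi(z)=(z/q^t)\bmod q$ is a homomorphism to $\Z/q\Z$, and it sends the difference realizing $t$ to a nonzero element. Hence the $\psi\circ\sigma_i'$ form a nontrivial embedding into $\Z/q\Z$.

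The main obstacle is this last step: making the target group divisible-free while keeping the embedding nontrivial. The argument handles it by shifting each map by constants so that everything lies in the subgroup $q^t\Z/q^e\Z$ singled out by the minimal valuation.
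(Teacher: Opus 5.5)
Your proof is correct and follows the paper's route: pass to the finitely generated subgroup, project onto a cyclic summand on which some difference survives, reduce $\Z$ modulo a suitable prime, and for $\Z/q^e\Z$ shift by constants and divide out the common power of $q$. The only difference is cosmetic. Choosing the minimal $q$-adic valuation $t$ over all nonzero differences does in one step what the paper's induction on the exponent (reducing modulo $q^n$ until the embedding becomes trivial) does level by level, and your map $z\mapsto (z/q^t)\bmod q$ agrees with the paper's $\phi_j$ up to additive constants.
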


To avoid a clash with the prime $p$ of $\F_p^n$, we write $q$ for the prime produced by Lemma \ref{lem:cyclicReduction}. In our application $\Sigma_j = \F_p$ for every $j$, and $q$ may or may not equal $p$.

\begin{proof}
    Let $(G,+)$ be an abelian group and suppose the maps $f_i:\Sigma_i \rightarrow G$, $0 \leq i \leq \ell$, form a nontrivial abelian embedding of $\mu$ into $G$. The sets $\Sigma_i$ are finite, so the maps $f_0,\ldots,f_\ell$ have finite images. Replacing $G$ by the subgroup generated by these images, we may assume that $G$ is finitely generated. Then there exist nonnegative integers $r, n_1,\ldots,n_s$ and primes $q_1,\ldots,q_s$ such that $G \cong \Z^r \oplus \Z/q_1^{n_1} \Z \oplus \cdots \oplus  \Z/q_s^{n_s}\Z$. Composing $f_0,\ldots,f_\ell$ with the projection onto any cyclic summand gives an abelian embedding into that summand, since projections are homomorphisms. At least one of these embeddings is nontrivial: if all were trivial, then every coordinate projection of every $f_i$ would be constant, and hence every $f_i$ would be constant, since an element of $G$ is determined by its coordinates. There are two cases.

    \emph{Case 1: a nontrivial embedding $f_0',\ldots,f_\ell'$ into $\Z$.} Some $f_i'$ takes two distinct values. Let $q$ be a prime larger than $|f_i'(a) - f_i'(b)|$ for all $0 \leq i \leq \ell$ and $a, b \in \Sigma_i$. Let $\pi:\Z \rightarrow \Z/q\Z$ be reduction mod $q$. Then $\pi \circ f_0',\ldots,\pi \circ f_\ell'$ is an abelian embedding into $\Z/q\Z$, and it is nontrivial because $\pi$ is injective on the finite set of values of each $f_i'$.

    \emph{Case 2: a nontrivial embedding into $\Z/q^n \Z$ for some prime $q$ and $n\geq 1$.} We show by induction on $n$ that there is then a nontrivial embedding into $\Z/q\Z$. The case $n=1$ is immediate. Suppose the claim holds for some $n\geq 1$, and let $f_0,\ldots,f_\ell$ be a nontrivial embedding into $\Z/q^{n+1}\Z$. We identify $\Z/q^{n+1}\Z$ with the residues $\{0,1,\ldots,q^{n+1}-1\}$. The maps $g_j \coloneqq f_j \bmod q^n$ form an embedding into $\Z/q^n\Z$, since reduction mod $q^n$ is a homomorphism. If this embedding is nontrivial, the induction hypothesis finishes the proof.

    Otherwise, every $g_j$ is constant, say $g_j \equiv c_j$ with $c_j \in \{0,1,\ldots,q^n-1\}$. Then, write each \( f_j \) as 
    \begin{align*}
        f_j(x) = c_j + q^n\phi_j(x), \qquad \phi_j:\Sigma_j \to \{0,1,\ldots,q-1\},
    \end{align*}
    and since the $f_j$ are not all constant, the $\phi_j$ are not all constant. Pick any $(x_0,\ldots,x_\ell) \in \supp\mu$. Reducing $\sum_j f_j(x_j) \equiv 0 \pmod{q^{n+1}}$ modulo $q^n$ gives $\sum_j c_j \equiv 0 \pmod{q^n}$. Write $\sum_j c_j = q^n c$ with $c \in \Z$. Then for every $(x_0,\ldots,x_\ell)\in\supp\mu$,
    \begin{align*}
        0 \equiv \sum_{j=0}^\ell f_j(x_j) = q^n\Big(c + \sum_{j=0}^\ell \phi_j(x_j)\Big) \pmod{q^{n+1}},
    \end{align*}
    so $c + \sum_{j} \phi_j(x_j) \equiv 0 \pmod q$. Hence the maps $\phi_0 + c,\ \phi_1,\ldots,\phi_\ell$, reduced mod $q$, form an abelian embedding into $\Z/q\Z$. It is nontrivial because each $\phi_j$ takes values in $\{0,\ldots,q-1\}$, where reduction mod $q$ is injective, and some $\phi_j$ is nonconstant.
\end{proof}

\begin{proof}[Proof of Proposition \ref{prop:noAbelianEmbeddings} assuming Lemmas \ref{lem:sameChar} and \ref{lem:diffChar}]  
     Suppose that $\mu$ admits a nontrivial abelian embedding. By Lemma \ref{lem:cyclicReduction}, it then admits a nontrivial abelian embedding $f_0,\ldots,f_\ell:\F_p\to\mathbb{Z}/q\mathbb{Z}$ for some prime $q$. Since the support of $\mu$ is the whole set in Proposition \ref{prop:noAbelianEmbeddings}, these maps satisfy $f_0(x)+\sum_{j=1}^\ell f_j(x+y^j)=0$ for all $x\in\F_p$ and all $y\in\{0,2,\ldots,M\}$. If $q=p$, Lemma \ref{lem:sameChar} (which applies since $M\geq \ell+1$ and $p>M$) shows that every $f_j$ is constant. If $q\neq p$, Lemma \ref{lem:diffChar} (which applies since $M\geq M(\ell)$ and $p>P(\ell)$) shows the same. Either way the embedding is trivial, a contradiction.
\end{proof}

\section{No abelian embeddings into finite fields with the same characteristic}\label{sec:sameChar} The goal of this section is to prove Lemma \ref{lem:sameChar}, which we restate here:

\begin{sameCharRestatement}
    Let $\ell \geq 1$, let $M\geq \ell+1$, and let $p > M$ be prime. If $f_0,\hdots,f_\ell:\mathbb{F}_p\rightarrow \F_p$ satisfy
    \begin{align}\label{eq:sameCharSystem}
        f_0 (x) + \sum_{j=1}^{\ell}f_j(x+y^j)=0 \qquad \text{for all } x \in \F_p \text{ and } y\in \{0,2,\hdots,M\},
    \end{align}
    then $f_j$ is constant for every $j = 0,1, \ldots, \ell$. That is, $\mu$ admits no nontrivial abelian embedding into $(\F_p,+)$.
\end{sameCharRestatement}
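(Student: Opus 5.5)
The plan is to convert \eqref{eq:sameCharSystem} into linear equations in the translation operator $T$, where $Tf(x)=f(x+1)$, and exploit that $T$ is unipotent on functions $\F_p\to\F_p$.

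\emph{Step 1: eliminate $f_0$.} Taking $y=0$ in \eqref{eq:sameCharSystem} gives $f_0(x)=-\sum_{j=1}^\ell f_j(x)$. Subtracting this from the equation for a general $y\in\{2,\ldots,M\}$ yields
\begin{align*}
\sum_{j=1}^\ell \big(T^{y^j}-I\big) f_j = 0 \qquad \text{for each } y\in\{2,\ldots,M\},
\end{align*}
an identity of functions on $\F_p$. Here $y^j$ is read as an integer exponent, which is legitimate since $T^p=I$.

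\emph{Step 2: write $T$ through a nilpotent operator.} Let $D=T-I$. Since $p=0$ in $\F_p$, we have $D^p=T^p-I=0$, so $D$ is nilpotent. Moreover, $\ker D$ consists of the constant functions and $\ker D^m$ consists of the polynomial functions of degree $<m$ (a function with $D^m g=0$ has its $m$-th finite difference equal to zero). Since $T=I+D$, for any integer $N\geq 0$,
\begin{align*}
T^N-I=\sum_{m\ge 1}\binom{N}{m}D^m .
\end{align*}
Substituting into Step 1 and collecting terms gives, for each $y\in\{2,\ldots,M\}$,
\begin{align*}
\sum_{m=1}^{p-1} D^m \sum_{j=1}^\ell \binom{y^j}{m} f_j=0 .
\end{align*}
The goal is to show $Df_j=0$ for all $j$, which makes every $f_j$ constant and hence $f_0$ constant too.

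\emph{Step 3: filtration argument.} Suppose not, and let $r\ge1$ be the largest integer with $D^{r}f_j\ne0$ for some $j$. Apply $D^{r-1}$ to each equation. Every term with $m\ge 2$ involves $D^{m+r-1}f_j$ with $m+r-1>r$, so it vanishes. Only the $m=1$ term survives, giving
\begin{align*}
\sum_{j=1}^\ell y^j\, D^{r}f_j = 0 \qquad \text{for all } y\in\{2,\ldots,M\}.
\end{align*}
Evaluate at each point $x\in\F_p$. The vector $v=(D^rf_j(x))_{j=1}^\ell\in\F_p^\ell$ is then annihilated by the $(M-1)\times\ell$ matrix $(y^j)_{y,j}$. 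Choose $\ell$ of the rows, namely $y=2,\ldots,\ell+1$, which is possible since $M\ge\ell+1$. The corresponding minor is $\prod_y y$ times a Vandermonde determinant $\prod_{y<y'}(y'-y)$. This is nonzero in $\F_p$ because all the $y$ and all the differences lie in $[1,M]$ and $p>M$. Hence $v=0$ for every $x$, so $D^rf_j=0$ for all $j$, contradicting the choice of $r$.

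The main point needing care is Step 3. One must check that the higher-order binomial terms really vanish after applying $D^{r-1}$, using the maximality of $r$; this is why the argument is run from the top of the nilpotent filtration. One must also check that the binomial expansion is valid for the integer exponents $y^j$, which may exceed $p$. This is handled by expanding $(I+D)^{N}$ with the integer $N=y^j$; the coefficient of $D^1$ is $N\equiv y^j \pmod p$, which is exactly what the Vandermonde argument uses.
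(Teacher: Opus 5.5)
Your proof is correct and is essentially the paper's argument: both eliminate $f_0$ using $y=0$, use that $T-\mathrm{Id}$ is nilpotent in characteristic $p$, and reduce to the invertibility modulo $p$ of the matrix $(y^j)$ with rows $y=2,\ldots,\ell+1$. The only difference is packaging: the paper factors $T^{y^j}-1=(T-1)B_{y,j}(T)$, shows $\det B(T)$ is a unit in the local ring $\F_p[T]/(T^p-1)$, and applies the adjugate, whereas you peel off the top nonzero layer $D^r f_j$ of the nilpotent filtration, which is the same argument carried out by hand.
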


\begin{proof}
    Evaluating \eqref{eq:sameCharSystem} at $y=0$ gives
    \begin{align}\label{eq:yZero}
        f_0(x)+f_1(x)+\hdots+f_\ell(x)=0, \qquad x \in \F_p.
    \end{align}
    Subtracting \eqref{eq:yZero} from \eqref{eq:sameCharSystem} eliminates $f_0$:
    \begin{align}\label{eq:sameCharDiff}
        \sum_{j=1}^\ell (T^{y^j}-\mathrm{Id})f_j=0, \qquad y\in \{2,\hdots,M\},
    \end{align}
    where $T$ is the translation operator $Tf(x) \coloneqq f(x+1)$ on functions $\F_p \to \F_p$ and $\mathrm{Id}$ is the identity operator.

    \emph{The ring $R$.} Since $T^p = \mathrm{Id}$, the space of functions $\F_p \to \F_p$ is a module over $R\coloneqq\mathbb{F}_p[T]/(T^p-1)$, in which $1\in R$ acts as $\mathrm{Id}$; inside $R$ we therefore write $1$ for $\mathrm{Id}$. In characteristic $p$ we have $T^p-1=(T-1)^p$, so $T - 1$ is nilpotent in $R$. Consequently, an element $F(T) \in R$ is a unit if and only if $F(1)\neq 0$. Indeed, write $F(T) = F(1) + (T-1)G(T)$. If $F(1) \neq 0$, then $F(T) = F(1)\big(1 + N\big)$ with $N = F(1)^{-1}(T-1)G(T)$ nilpotent, and $1+N$ has inverse $\sum_{i=0}^{p-1}(-N)^i$. If $F(1) = 0$, then $F(T)$ is a multiple of the nilpotent element $T-1$, hence not a unit.

    \emph{Factoring out $T - 1$.} For $y\geq 2$ and $1\leq j\leq \ell$,
    \begin{align*}
        T^{y^j}-1 = (T-1)B_{y,j}(T), \qquad B_{y,j}(T) \coloneqq 1+T+\cdots + T^{y^j-1}, \qquad B_{y,j}(1) = y^j.
    \end{align*}
    Setting $g_j \coloneqq (T-\mathrm{Id})f_j$, and using that polynomials in $T$ commute, \eqref{eq:sameCharDiff} becomes $\sum_{j=1}^\ell B_{y,j}(T)\,g_j = 0$ for $y\in\{2,\ldots,M\}$. Keep only the $\ell$ rows $y_r \coloneqq r+1$, $1 \leq r \leq \ell$. These lie in $\{2,\ldots,M\}$ because $M\geq \ell+1$. Let $B(T)\coloneqq \big(B_{y_r,j}(T)\big)_{1\leq r,j\leq \ell}$, an $\ell\times \ell$ matrix over $R$. Then
    \begin{align*}
        B(T)\,\vec g = 0, \qquad \vec g \coloneqq (g_1,\ldots,g_\ell)^{\top}.
    \end{align*}

    \emph{Invertibility.} The determinant commutes with the ring homomorphism $R \to \F_p$, $T \mapsto 1$, so $\det B(T)$ evaluated at $T=1$ equals $\det\big(y_r^j\big)_{1\leq r,j\leq \ell}$. Factoring $y_r$ out of the $r$-th row leaves the Vandermonde matrix $\big(y_r^{j-1}\big)_{1\leq r,j\leq \ell}$, so, since $y_r=r+1$,
    \begin{align*}
        \det\big(y_r^j\big)_{1\leq r,j\leq \ell}=\prod_{r=1}^\ell y_r\cdot \prod_{1\leq r<s\leq \ell}(y_s-y_r)=\prod_{r=1}^{\ell}(r+1)\cdot\prod_{1\leq r<s\leq \ell}(s-r).
    \end{align*}
    Each factor is an integer in $\{1,\ldots,M\}$, hence nonzero modulo $p$ because $p>M$; as $p$ is prime, the product of these factors is nonzero in $\F_p$ as well. Thus $\det B(T)$ is a unit in $R$.
    In a ring, we have the identity using the adjugate of \( B(T) \): 
    \( \textup{adj}(B(T))B(T)=\textup{det}(B)I \). 
    And, hence,  $\det B(T)\, g_j = 0$, and therefore $g_j = 0$ for each $j$.

    \emph{Conclusion.} For $1 \leq j \leq \ell$, $g_j = 0$ means $f_j(x+1)=f_j(x)$ for all $x\in\F_p$, so $f_j$ is constant. By \eqref{eq:yZero}, $f_0=-(f_1+\cdots+f_\ell)$ is constant as well.
\end{proof}
 
\section{No abelian embeddings into finite fields of different characteristic}\label{sec:diffChar}

This section is dedicated to the proof of Lemma \ref{lem:diffChar}, which we restate here:

\begin{diffCharRestatement}
    Let $\ell \geq 1$. There exist $M(\ell), P(\ell) \in \N$ for which the following holds. Let $M\geq M(\ell)$, let $p>P(\ell)$ be prime, and let $q\neq p$ be prime. If $f_0,\ldots,f_\ell:\F_p\to\F_q$ satisfy 
        \begin{align}\notag
        f_0 (x) + \sum_{j=1}^{\ell}f_j(x+y^j)=0 \qquad \text{for all } x \in \F_p \text{ and } y\in \{0,2,\hdots,M\},
    \end{align}
    then $f_j$ is constant for every $j=0,1,\ldots,\ell$. That is, $\mu$ admits no nontrivial abelian embedding into $(\F_q,+)$. Quantitatively, one can take $M(\ell)=\exp(O(\ell\log\ell))$ and $P(\ell)=\exp\!\big(\exp(O(\ell^2\log\ell))\big)$.
    \end{diffCharRestatement}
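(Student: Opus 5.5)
Following the proof of Lemma \ref{lem:sameChar}, I will first use $y=0$ to eliminate $f_0$. This leaves
\begin{align*}
    \sum_{j=1}^\ell (T^{y^j}-\mathrm{Id})f_j=0, \qquad y\in\{2,\ldots,M\},
\end{align*}
where $T$ is translation on functions $\F_p\to\F_q$. Since $q\neq p$, the polynomial $T^p-1$ is separable over $\overline{\F}_q$, so $T$ is diagonalizable over $\overline{\F}_q$. Its eigenfunctions are the characters $x\mapsto\lambda^x$, one for each $p$-th root of unity $\lambda\in\overline{\F}_q$.

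Expanding each $f_j=\sum_\lambda \widehat{f_j}(\lambda)\lambda^x$ and comparing coefficients gives, for every $\lambda$, the linear system
\begin{align*}
    A_{\ell,M}(\lambda)\,\big(\widehat{f_1}(\lambda),\ldots,\widehat{f_\ell}(\lambda)\big)^{\top}=0, \qquad A_{\ell,M}(\lambda)=\big(\lambda^{y^j}-1\big)_{2\leq y\leq M,\ 1\leq j\leq \ell}.
\end{align*}
For $\lambda=1$ the system is vacuous, which corresponds to the constant part of each $f_j$. So the lemma reduces to showing that $A_{\ell,M}(\lambda)$ has rank $\ell$ for every primitive $p$-th root of unity $\lambda$ in characteristic $q$, uniformly in $q$. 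Granting this, every nonconstant Fourier coefficient of $f_1,\ldots,f_\ell$ vanishes, so each $f_j$ is constant, and then $f_0=-\sum_{j\geq 1} f_j$ is constant as well. I expect this uniform-rank statement to be what Proposition \ref{prop:uniformTorsionRank} records, and the rest of the section should prove it.

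For the rank statement I will work first in characteristic zero, with $\lambda=\zeta$ a primitive $p$-th root of unity in $\C$. I choose $\ell$ rows $y_1<\cdots<y_\ell$ and consider the minor $D(\zeta)=\det(\zeta^{y_r^j}-1)$. I want $D(\zeta)\neq 0$ for every $p>P(\ell)$. If $D(\zeta)=0$, the exponent vectors $(y_r^j)$ satisfy a nontrivial relation modulo $p$. The idea is that choosing the $y_r$ suitably (for instance, with the $y_r^j$ growing fast), together with $p$ large relative to $M^\ell$, should make the minor look like a nonvanishing generalized Vandermonde/Schur-type polynomial in $\zeta$ of degree at most $\ell M^\ell$. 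The obstacle is that $\zeta$ has degree $p-1$, so the cyclotomic polynomial $\Phi_p$ could still divide this polynomial.

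To handle this, I will view $D$ as an integer polynomial $D(t)$ of height at most $\ell!\,2^\ell$ and degree at most $\ell M^\ell$. If $\Phi_p\mid D$ for a large prime $p$, then $D$ vanishes at a root of unity of large order. A theorem of Dimitrov type (Theorem \ref{thm:dimitrov}) on polynomials with small Mahler measure, or a Kronecker-type compactness argument in the qualitative version, then forces $D$ to have a cyclotomic factor of a specific shape. I will exclude that shape by choosing the $y_r$ so that $D(t)$ has a nonzero leading monomial with a distinctive structure. This gives a polynomial $D$ that is not divisible by $\Phi_p$ for $p>P(\ell)$.

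Finally I transfer to characteristic $q$. Because $\gcd(D,\Phi_p)=1$ in $\mathbb{Q}[t]$, there is a B\'ezout identity $uD+v\Phi_p=N$ with $u,v\in\Z[t]$ and $N$ a nonzero integer. For $q\nmid N$, reducing mod $q$ and evaluating at $\lambda$ gives $D(\lambda)\neq 0$. The finitely many primes $q\mid N$ need a separate argument, namely comparing degrees: in characteristic $q$ the minimal polynomial of $\lambda$ has degree equal to the order of $q$ modulo $p$, and I would either use several different choices of rows to avoid each bad $q$, or bound $N$ so that $p>P(\ell)$ rules these primes out. The quantitative bounds come from estimating $N$ via a resultant, using the degrees and heights above, which gives $P(\ell)=\exp\exp(O(\ell^2\log\ell))$. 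The main obstacle is the characteristic-zero nonvanishing of the minor uniformly in $p$, together with controlling the exceptional primes $q$.
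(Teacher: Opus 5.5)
Your reduction to full column rank of $A_{\ell,M}(\lambda)$ at every $\lambda\in\overline{\F}_q$ of order $p$, uniformly in $q$, is correct and is exactly the paper's reduction (Subsection \ref{sec:rankReduction}). The gap is in how you prove that rank statement.

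You work with one minor $D$ and a B\'ezout identity $uD+v\Phi_p=N$, but this $N$ depends on $p$. Any such $N$ is divisible by every prime $q$ for which $\overline{D}$ and $\overline{\Phi_p}$ share a root in $\overline{\F}_q$, i.e.\ by every prime factor of $\mathrm{Res}(D,\Phi_p)=\pm\prod_{\zeta}D(\zeta)$, and this resultant typically grows exponentially with $p$. So the exceptional primes are not a fixed finite set, and no bound on $\lvert N\rvert$ becomes a condition $p>P(\ell)$. The degree comparison only gives $p\mid q^r-1$ for some $r\le\deg D$, which bounds $p$ only if $q$ is bounded independently of $p$.

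A single minor genuinely fails. For $\ell=2$ and rows $\{2,3\}$,
\begin{align*}
    \det\begin{pmatrix} t^2-1 & t^4-1\\ t^3-1 & t^9-1\end{pmatrix}=t^2(t-1)^2(t+1)(t^2+t+1)(t^4+t-1).
\end{align*}
The factor $g=t^4+t-1$ is monic with $\lvert g(0)\rvert=1$, has a root in $(0,1)$, and has no roots on the unit circle. Hence its Mahler measure satisfies $\mathrm{M}(g)>1$, and Jensen's formula gives $\frac1p\log\lvert\mathrm{Res}(g,\Phi_p)\rvert\to\log\mathrm{M}(g)>0$. On the other hand $\mathrm{Res}(g,\Phi_p)\equiv\pm g(1)^{p-1}=\pm1\pmod p$. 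So for every large $p$ some prime $q\neq p$ divides this resultant. A common root of $\overline{g}$ and $\overline{\Phi_p}$ in $\overline{\F}_q$ then has order exactly $p$ and kills the minor.

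Your characteristic-zero step is also trivial for a single minor: $\Phi_p\nmid D$ once $p-1>\deg D$. So you have placed the difficulty, and Dimitrov's theorem, in the wrong spot.

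The missing idea is to take the B\'ezout identity among the minors themselves, so that its integer constant does not depend on $p$. This requires the characteristic-zero fact that the minors of a fixed matrix $A_{\ell,M}$, with $M$ large, have no common nonzero roots other than roots of unity. The paper argues as follows.

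\begin{enumerate}
    \item Let $D$ be the gcd in $\mathbb{Q}[z]$ of all $\ell\times\ell$ minors of $A_{\ell,M}$.
    \item Lemma \ref{lem:smallMinors}, applied to rows $M-\ell+1,\dots,M$, bounds every conjugate of a root of $D$ by $\exp(\ell\log(\ell+1)/(M-\ell))$.
    \item The minor with rows $2,\dots,\ell+1$ gives $\deg D\le d_\ell$.
    \item Dimitrov's theorem then forces $D=z^a\prod_m\Phi_m^{e_m}$ with $\varphi(m)\le d_\ell$ (Lemma \ref{thm:quantitativeRankDeficiency}). This is where $M(\ell)=\exp(O(\ell\log\ell))$ enters.
    \item Cramer's rule and Hadamard's inequality give $\sum_i\Delta_iV_i=BD$ with $V_i\in\mathbb{Z}[z]$ and $B\le(2^\ell\ell!)^{n_\ell+d_\ell}$, independent of $p$ and $q$.
    \item If $q\nmid B$, a common root of order $p$ is a root of some $\overline{\Phi_m}$, so $p\le d_\ell+1$.
    \item If $q\mid B$, then $q\le B$, and $\lambda\in\F_{q^r}$ with $r\le d_\ell$ because $\overline{\Delta_1}$ is monic of degree $d_\ell$ (Lemma \ref{lem:minorsMonic}). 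Hence $p\le B^{d_\ell}$.
\end{enumerate}

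Your remark about using several choices of rows points in this direction. However, the rows must be fixed independently of $p$ and $q$, and it is the gcd over that fixed set, not a resultant with $\Phi_p$, that produces $P(\ell)$.
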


We reduce Lemma \ref{lem:diffChar} to the following proposition, for which we give two proofs, one qualitative and the other quantitative:

\begin{proposition}\label{prop:uniformTorsionRank}
    For integers $\ell\geq 1$ and $M\geq 2$ define the $(M-1)\times \ell$ matrix \begin{align*}
        A_{\ell,M}(z)\coloneqq(z^{y^j}-1)_{\substack{2\leq y\leq M\\ 1\leq j\leq \ell}}.
    \end{align*} Then there exist integers $M=M(\ell)$ and $P=P(\ell)$, depending only on $\ell$, with the following property. If $p>P$ and $q\neq p$ are primes, and $\lambda \in \overline{\mathbb{F}}_q^\times$ has multiplicative order $p$, then $A_{\ell,M}(\lambda)$ has rank $\ell$. Quantitatively, one can take $M(\ell)=\exp(O(\ell\log\ell))$ and $P(\ell)=\exp\!\big(\exp(O(\ell^2\log\ell))\big)$.
\end{proposition}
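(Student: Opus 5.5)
We want to show that some $\ell\times\ell$ minor of $A_{\ell,M}(\lambda)$ is nonzero whenever $\lambda$ has order $p$ in $\overline{\F}_q$. We first work in characteristic zero with $\zeta$ a primitive $p$-th root of unity, and then transfer to characteristic $q$. Since $\lambda^{y^j}$ depends only on $y^j \bmod p$, the natural object is the integer polynomial
\begin{align*}
D_{Y}(z)\coloneqq \det\big(z^{y^j}-1\big)_{y\in Y,\,1\le j\le \ell}, \qquad Y\subseteq\{2,\ldots,M\},\ |Y|=\ell .
\end{align*}
The aim is to choose, for each $p$, an $\ell$-subset $Y$ with $D_Y(\zeta)\neq 0$ in $\Q(\zeta)$, and moreover to control the size of $D_Y(\zeta)$ so that the nonvanishing survives reduction modulo primes above $q$.

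\emph{Step 1: characteristic zero.} Write $\zeta=e(a/p)$. The matrix $(\zeta^{y^j}-1)$ is a "generalized Vandermonde" in the values $\theta_j\coloneqq a/p$ evaluated at $y^j$. The qualitative route is as follows. Consider the torus map $t\mapsto (e(t y^j)-1)_{y,j}$. By Kronecker's theorem, as $p\to\infty$ the points $a/p$ equidistribute, and the determinant is a nonzero trigonometric polynomial in $t$ (its leading exponential term, coming from the largest product of exponents, is unique if the $y$'s are chosen so that the exponent sums $\sum_r y_r^{j_{\sigma(r)}}$ over permutations $\sigma$ are distinct). A Noetherian/ideal argument over the finitely many $\ell$-subsets of $\{2,\ldots,M\}$ then shows that the common zero set of all $D_Y$ on roots of unity is finite once $M$ is large. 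Concretely, if $\zeta$ kills all $\ell\times\ell$ minors for infinitely many $p$, then by Lang-type (torsion points on a variety) reasoning the kernel vector yields a relation $\sum_j c_j \zeta^{y^j}=\sum_j c_j$ for all $y\le M$, which is impossible for large $M$.

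The quantitative route instead bounds $|D_Y(\zeta)|$ below via its Galois norm. The quantity $N(D_Y(\zeta))$ is a nonzero integer, provided we can show nonvanishing. For this we invoke Dimitrov's result (Theorem \ref{thm:dimitrov}, on Mahler measure / cyclotomic factors) to show that $D_Y(z)$ is not divisible by $\Phi_p(z)$ for $p$ exceeding a bound in terms of the degree $\ell M^\ell$ and the height. The height of $D_Y$ is at most $\ell!\,2^\ell$ and its degree is at most $\ell M^{\ell}$; choosing $M=\exp(O(\ell\log\ell))$ guarantees that some $D_Y$ has few cyclotomic factors.

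\emph{Step 2: transfer to characteristic $q$.} Suppose $\Phi_p\nmid D_Y$ in $\Q[z]$. Then there is a Bézout identity $u\Phi_p+vD_Y=N$ with $u,v\in\Z[z]$ and $N$ a nonzero integer bounded by a resultant, roughly $(\text{height})^{\deg}$, which gives $N\le \exp\exp(O(\ell^2\log\ell))$. If $q\nmid N$, then substituting $\lambda$, which is a root of $\Phi_p$ in $\overline{\F}_q$ since $q\neq p$, gives $D_Y(\lambda)\neq0$. For the finitely many exceptional $q\mid N$ we use a degree argument: $\lambda$ generates $\F_{q^{m}}$ with $m=\mathrm{ord}_p(q)$. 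Either $m$ is large, in which case $D_Y$, of degree less than $m$ after reduction of exponents mod $p$ and nonzero mod $q$, cannot vanish at $\lambda$; or $m$ is small, which forces $p\mid q^m-1\le N^m$ and hence $p$ bounded. Taking $p>P(\ell)$ excludes the latter case.

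The main obstacle is the transfer in the exceptional characteristics: the reduced exponents $y^j \bmod p$ need not be small, so the degree argument requires care. I would try to first replace $D_Y$ by its reduction modulo $z^p-1$ and then argue with the minimal polynomial of $\lambda$ over $\F_q$, whose degree is $\mathrm{ord}_p(q)$.
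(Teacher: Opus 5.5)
There is a genuine gap, and it is the step you flag at the end. Your B\'ezout identity $u\Phi_p+vD_Y=N$ is taken against $\Phi_p$, so $N$ is governed by the resultant $\operatorname{Res}(\Phi_p,D_Y)=\pm\prod_{\zeta}D_Y(\zeta)$ over primitive $p$-th roots of unity. That resultant is only bounded by $(2^\ell\ell!)^{p-1}$ and in general grows exponentially in $p$, so the claim $N\le\exp\exp(O(\ell^2\log\ell))$ uniformly in $p$ is false. The exceptional case then becomes circular: from $q\mid N$ and $m=\operatorname{ord}_p(q)\le\deg D_Y$ you only get $p<q^m\le N^m$, with $N$ depending on $p$. (The case $m>\deg D_Y$ is fine as it stands, without reducing exponents, since $\overline{D_Y}$ is monic of degree $<m$.)

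This cannot be repaired for a fixed $Y$. A single minor typically has roots that are not roots of unity. For $\ell=2$, the minor with rows $2,3$ equals $z^2(z-1)(z^2-1)(z^6+z^5+z^4+z^3-1)$, which has a real root in $(0,1)$. If $\beta$ is such a root, reduce $\beta$ modulo a primitive prime divisor of $\beta^p-1$, which exists for all large $p$ by Schinzel's theorem. This gives an element of order exactly $p$ in some $\overline{\F}_q$ with $q\neq p$, at which $D_Y$ vanishes.

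Dimitrov's theorem also cannot do what you ask of it. The fact that $\Phi_p\nmid D_Y$ for $p>\deg D_Y+1$ is trivial by degree, and Dimitrov's theorem is a statement about non-cyclotomic polynomials. Your qualitative sketch has the same hole. For every $M\geq\ell+1$, only finitely many roots of unity kill all minors in characteristic zero, because the minors are nonzero polynomials. What matters are the common zeros that are not roots of unity.

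The missing idea is to work with all minors at once, through $D(z)=\gcd_Y D_Y(z)$ in $\mathbb{Q}[z]$, and to prove that $D=z^a\prod_m\Phi_m^{e_m}$ once $M$ is large. By the primitive-divisor argument above, some such statement is unavoidable: a non-cyclotomic factor of $D$ would make the proposition fail for that $M$. The paper proves it in Lemmas \ref{lem:smallMinors} and \ref{thm:quantitativeRankDeficiency}, in three steps.

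First, the minor with consecutive rows $M-\ell+1,\dots,M$ has the form $z^d+R(z)$ with $\deg R\le d-M+\ell$ and coefficient sum at most $2^\ell\ell!$. Hence every common root of the minors has modulus at most $\exp(\ell\log(\ell+1)/(M-\ell))$.

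Second, every irreducible factor of $D$ has degree at most $d_\ell=\sum_r(r+1)^r$, because $D$ divides the minor with rows $2,\dots,\ell+1$.

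Third, Dimitrov's lower bound $2^{1/(4d)}$ then excludes non-cyclotomic factors once $M\gg d_\ell\,\ell\log\ell$.

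Only after this is B\'ezout applied, against $D$ rather than $\Phi_p$. This gives $\sum_i\Delta_iV_i=BD$ with $B\le(2^\ell\ell!)^{n_\ell+d_\ell}$, where $n_\ell=\sum_r(M-\ell+r)^r$, so $B$ is independent of $p$ and $q$. For $q\nmid B$, $\lambda$ is a root of some $\overline{\Phi_m}$ with $\varphi(m)\le d_\ell$, so $p\le d_\ell+1$. For $q\mid B$, we have $q\le B$ and $\lambda$ is a root of the monic $\overline{\Delta_1}$ of degree $d_\ell$, so its order is less than $B^{d_\ell}$. This is what makes the bound on $p$ uniform in $q$.
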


We first reduce Lemma \ref{lem:diffChar} to Proposition \ref{prop:uniformTorsionRank} in Subsection \ref{sec:rankReduction}, and then give two proofs of the proposition, one qualitative and the other quantitative, in subsections \ref{sec:qualitativeLem} and \ref{sec:quantitativeLem} respectively. The qualitative proof in Subsection \ref{sec:qualitativeLem} establishes the existence of \(M(\ell)\) and \(P(\ell)\), while the quantitative proof in Subsection \ref{sec:quantitativeLem} gives explicit bounds for these parameters.

Both proofs proceed similarly. We first find a finite cutoff \(M(\ell)\) such that any nonzero algebraic number $\theta$ at which \(A_{\ell,M(\ell)}(\theta)\) has deficient rank over $\overline{\mathbb{Q}}$ must be a root of unity (this corresponds to Lemma \ref{lem:fintiteDefRankImpliesUnity} in the qualitative approach and Lemma \ref{thm:quantitativeRankDeficiency} in the quantitative approach). We then use this conclusion to bound, uniformly over finite characteristics \(q\), the possible prime orders of elements $\lambda \in \overline{\mathbb{F}}_q^{\times}$ at which the matrix $A_{\ell,M}(\lambda)$ has deficient rank. Choosing \(P(\ell)\) larger than these possible orders proves Proposition \ref{prop:uniformTorsionRank}. The two approaches differ in how they obtain the cutoff and control the passage to finite characteristic.

\subsection{Reduction to Proposition \ref{prop:uniformTorsionRank}}\label{sec:rankReduction}

We first reduce Lemma \ref{lem:diffChar} to Proposition \ref{prop:uniformTorsionRank}. Fix distinct primes $p,q$ and maps $f_0,\ldots,f_\ell:\F_p\to\F_q$ satisfying the hypothesis of Lemma \ref{lem:diffChar}. The bounds on $M$ and $p$ will enter when we apply Proposition \ref{prop:uniformTorsionRank}. As in the previous section, consider the translation operator $Tf(x)\coloneqq f(x+1)$. The hypothesis of the lemma becomes
\begin{align}\label{eq no ab restriction}
    f_0(x) + \sum_{j=1}^\ell T^{y^j} f_j(x) = 0, \qquad x\in\F_p,\quad y\in\{0,2,3,\ldots,M\}.
\end{align}
Observe that $(T^p - \mathrm{Id})f =0$ for every function $f: \F_p \rightarrow \F_q$. 
Since $p \neq q$, the polynomial $z^p -1$ splits into distinct linear factors over the algebraic closure\footnote{That is, $\overline{\F}_q$ denotes an algebraic closure containing $\F_q$ such that every nonconstant polynomial with coefficients in $\F_q$ splits completely into linear factors over $\overline{\F}_q$.} $\overline{\F}_q$. As such, the translation operator $T$ is diagonalizable over $\overline{\F}_q$. We extend scalars: each $f_j$ is regarded as a function $\F_p\to\overline{\F}_q$, and $T$ acts on the $\overline{\F}_q$-vector space of all such functions. We decompose every $f_j$ with respect to the eigenspace decomposition of $T$ on this space and write 
\begin{align*}
    f_j = \sum_{\lambda^p =1} f_{j,\lambda}, \hspace{.5in} \textrm{where } T f_{j,\lambda}  = \lambda f_{j,\lambda}. 
\end{align*}
For every $p$-th root of unity $\lambda \in \overline{\F}_q$ (including $\lambda=1$), the $\lambda$-eigenspace is one-dimensional over $\overline{\F}_q$. In addition, an eigenfunction with eigenvalue $\lambda$ has, up to a scalar multiple, the form $g_{\lambda}(x) = \lambda^x$. Here $x\in\F_p$ is represented by an integer; since $\lambda^p=1$, the value $\lambda^x$ does not depend on the choice of representative. To verify this assertion, by repeatedly applying the translation operator to an eigenfunction $g_{\lambda}$, we obtain
\begin{align*}
    g_{\lambda}(x +1) = T g_{\lambda} (x) = \lambda g_{\lambda} (x), \hspace{.2in} \forall x \in \F_p 
\end{align*}
Thus, $g_{\lambda}(x) = \lambda^x g_{\lambda}(0)$, where we henceforth normalize the eigenfunctions to be $\lambda^x$. In addition, $\lambda^p =1$. The eigenspace decomposition for each function is thus given by
\begin{align*}
    f_j (x) = \sum_{\lambda^p =1} f_{j,\lambda} (x) = \sum_{\lambda^p =1} f_{j, \lambda}(0) \lambda^x.
\end{align*}
The eigenspace for $\lambda=1$ consists exactly of the constant functions. Thus, to prove Lemma \ref{lem:diffChar}, it suffices to show that $f_{j,\lambda}(0)=0$ whenever $\lambda\neq 1$. We now translate \eqref{eq no ab restriction} into equations for these coefficients:
\begin{align*}
     \sum_{\lambda^p =1 }f_{0, \lambda}(x)+\sum_{j=1}^\ell T^{y^j}  \sum_{\lambda^p = 1} f_{j, \lambda} (x)  =    \sum_{\lambda^p = 1} \lp f_{0, \lambda}(x)+ \sum_{j=1}^\ell \lambda^{ y^j} f_{j, \lambda} (x) \rp  = 0 
\end{align*}
By further expanding, the equation above can be rewritten as
\begin{align*}
    \sum_{\lambda^p = 1} \lambda^x \lp f_{0, \lambda}(0) + \sum_{j=1}^\ell \lambda^{ y^j} f_{j, \lambda} (0) \rp  = 0.
\end{align*}
The functions $x \mapsto \lambda^x$, $\lambda^p = 1$, are eigenfunctions of $T$ with distinct eigenvalues, hence linearly independent over $\overline{\F}_q$. Hence, for each $p$th root of unity $\lambda$, 
\begin{align*}
    f_{0, \lambda}(0) + \sum_{j=1}^\ell \lambda^{ y^j} f_{j, \lambda} (0) = 0 
\end{align*}
where $y \in \{0, 2, 3, \ldots, M\}$. The equation with $y=0$ determines $f_{0,\lambda}(0)$ from the remaining coefficients. Subtracting it eliminates $f_{0,\lambda}(0)$ and gives, for $y\in\{2,3,\ldots,M\}$,
\begin{align*}
    \sum_{j=1}^\ell  (\lambda^{ y^j} -1) f_{j, \lambda} (0) = 0 
\end{align*}
For every fixed $\lambda$, we obtain $M-1$ equations in the $\ell$ coefficients $f_{1,\lambda}(0),\ldots,f_{\ell,\lambda}(0)$, of the form
\begin{align*}
    \begin{pmatrix}
        \lambda^y -1 & \lambda^{y^2} -1 & \cdots & \lambda^{y^\ell} -1
    \end{pmatrix} \begin{pmatrix}
        f_{1, \lambda}(0)\\
        f_{2, \lambda}(0)\\
        \vdots\\
        f_{\ell, \lambda}(0)
    \end{pmatrix} = 0 . 
\end{align*}
Regrouping the $M-1$ equations, we obtain the matrix equation
\begin{equation}
    \begin{split}
        A_{\ell, M} (\lambda) \vec{f} = \begin{pmatrix}
        \lambda^2 -1 & \lambda^{2^2} -1 & \cdots & \lambda^{2^\ell}-1\\
        \lambda^3 -1 & \lambda^{3^2} -1 & \cdots & \lambda^{3^\ell} -1 \\
        \lambda^4 -1  & \lambda^{4^2} -1 & \cdots & \lambda^{4^\ell} -1\\
        \vdots & \vdots & & \vdots \\
        \lambda^{M}-1 & \lambda^{M^2}-1 & \cdots & \lambda^{M^\ell}-1
    \end{pmatrix} \begin{pmatrix}
        f_{1, \lambda}(0)\\
        f_{2, \lambda}(0)\\
        \vdots\\
        f_{\ell, \lambda}(0)
    \end{pmatrix} = 0 
    \end{split}
\end{equation}
If $A_{\ell,M}(\lambda)$ has full column rank $\ell$, this matrix equation forces $f_{j,\lambda}(0)=0$ for $1\leq j\leq\ell$. Consequently, if full rank holds for every $\lambda\neq 1$ with $\lambda^p=1$, then $f_1,\ldots,f_\ell$ have only their $\lambda=1$ components and are constant. The case $y=0$ of \eqref{eq no ab restriction} then gives $f_0=-\sum_{j=1}^\ell f_j$, so $f_0$ is constant as well. This is precisely the conclusion of Lemma \ref{lem:diffChar}. 

The same conclusion holds for every larger step cutoff: if $M\geq M(\ell)$, the matrix $A_{\ell,M}(\lambda)$ contains $A_{\ell,M(\ell)}(\lambda)$ as its first $M(\ell)-1$ rows. Thus any choices of $M(\ell)$ and $P(\ell)$ in Proposition \ref{prop:uniformTorsionRank} give the corresponding choices in Lemma \ref{lem:diffChar}. This shows how Lemma \ref{lem:diffChar} reduces to Proposition \ref{prop:uniformTorsionRank}.

\subsection{Qualitative proof of Proposition \ref{prop:uniformTorsionRank}}\label{sec:qualitativeLem}

As mentioned earlier in this section, the first step in proving Proposition \ref{prop:uniformTorsionRank} is finding some $M$ such that any $\theta$ causing $A_{\ell,M}(\theta)$ to have rank $< \ell$ over $\overline{\mathbb{Q}}$ must be zero or a root of unity (this will be the content of Lemma \ref{lem:fintiteDefRankImpliesUnity}). For our qualitative approach, we do this by letting $M$ tend to infinity, and considering the infinite matrix analogue $A_{\ell,\infty} ( \theta)$. After importing Kronecker's theorem, this allows us to identify that if a nonzero algebraic number $\theta$ satisfies that $A_{\ell,\infty}(\theta)$ has rank strictly less than $\ell$, then $\theta$ is a root of unity (Lemma \ref{lem:defRankImpliesUnity}). 
We then use a Noetherian (i.e. compactness) argument to pass to finitely many rows (values of $y$), which is the main reason why this argument is existential (Lemma \ref{lem:finitelyManyRowsSuffice}). 
The greatest common divisor $D(z)$ of the minors of $A_{\ell,M}(z)$ has finitely many roots, all roots of unity or $0$, so only finitely many cyclotomic orders can obstruct full rank.
 Reducing modulo $q$ then works for all but finitely many exceptional primes $q$ (Lemma \ref{lem:goodCharacteristics}), which are handled with a degree bound (Lemma \ref{lem:exceptionalCharacteristics}).

For the argument, one will need to recall a few facts from algebraic number theory.

Recall that a \emph{place} of a number field $K$ is an equivalence class of nontrivial absolute values on $K$. Up to equivalence, the places of $K$ are the archimedean ones, $|x|_v = |\iota(x)|$ for a complex embedding $\iota:K\to\C$, and the non-archimedean ones, induced by the prime ideals of the ring of integers $\mathcal{O}_K$.

\begin{lemma}[An implication of Kronecker's theorem]\label{lem:kronecker} Let $K$ be a number field and let $\theta \in K^\times$ be not a root of unity. Then there exists a place $v$ of $K$ such that $|\theta|_v\neq 1$.
\end{lemma}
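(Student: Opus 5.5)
We will derive the statement from Kronecker's theorem together with the product formula for number fields.

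\emph{Step 1: reduce to algebraic integers.} Suppose, for contradiction, that $|\theta|_v=1$ for every place $v$ of $K$. In particular, $|\theta|_v\le 1$ at every non-archimedean place. The non-archimedean places correspond to the prime ideals $\mathfrak p$ of $\mathcal O_K$, with $|\theta|_{\mathfrak p}=N\mathfrak p^{-v_{\mathfrak p}(\theta)}$. So this says $v_{\mathfrak p}(\theta)\ge 0$ for all $\mathfrak p$, hence the fractional ideal $(\theta)$ is integral. Since $\mathcal O_K$ is integrally closed, this gives $\theta\in\mathcal O_K$, so $\theta$ is an algebraic integer.

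\emph{Step 2: all conjugates lie on the unit circle.} The archimedean places are given by the complex embeddings $\iota:K\to\C$. Every embedding of $K$ into $\C$ induces some archimedean place; complex-conjugate pairs give the same place, which is harmless here. So the hypothesis gives $|\iota(\theta)|=1$ for every embedding $\iota$. The Galois conjugates of $\theta$ over $\mathbb Q$ are exactly the values $\iota(\theta)$. Hence $\theta$ is a nonzero algebraic integer all of whose conjugates have absolute value $1$.

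\emph{Step 3: apply Kronecker's theorem.} Kronecker's theorem says that a nonzero algebraic integer with all conjugates on the unit circle is a root of unity. For completeness, here is the standard argument. Consider the powers $\theta^m$ for $m\ge1$. Each $\theta^m$ is an algebraic integer of degree at most $d=[K:\mathbb Q]$, and all its conjugates have modulus $1$. The coefficients of its minimal polynomial are elementary symmetric functions of the conjugates, so they are integers bounded by $\binom{d}{i}$. Only finitely many such polynomials exist, so $\theta^a=\theta^b$ for some $a<b$. Then $\theta^{b-a}=1$, which contradicts the assumption that $\theta$ is not a root of unity. Therefore some place $v$ has $|\theta|_v\neq1$.

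The only point needing care is the identification of places with prime ideals and embeddings, together with the step ``all $\mathfrak p$-adic valuations are nonnegative, so $\theta$ is integral.'' Both are standard facts about Dedekind domains and present no real obstacle.
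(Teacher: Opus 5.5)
Your proof is correct and matches the paper's argument: the non-archimedean places force $\theta\in\mathcal O_K$, the archimedean places put every conjugate on the unit circle, and Kronecker's theorem finishes. Your added proof of Kronecker's theorem (only finitely many integer polynomials of bounded degree with bounded coefficients, so the powers $\theta^m$ repeat) is also correct; the product formula you mention in your first sentence is never actually used and can be dropped.
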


\begin{proof}
    Suppose that $|\theta|_v = 1$ for every place $v$ of $K$. The non-archimedean places show that $\theta$ lies in every localization of $\mathcal{O}_K$ at a prime, so $\theta\in\mathcal{O}_K$ is an algebraic integer. The archimedean places show that every Galois conjugate of $\theta$ has absolute value $1$, since each conjugate is $\iota(\theta)$ for some embedding $\iota$. By Kronecker's theorem, an algebraic integer all of whose conjugates lie on the unit circle is a root of unity, a contradiction.
\end{proof}

\begin{lemma}[An implication of Hilbert's basis theorem] If $K$ is a Noetherian ring (all ideals are finitely generated), then the polynomial ring $K[z]$ is also Noetherian. In particular, $\mathbb{Q}[z]$ is a Noetherian ring.
    
\end{lemma}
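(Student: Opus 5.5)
This is the classical Hilbert basis theorem, and I would prove it by the leading-coefficient argument. Let $I \subseteq K[z]$ be an ideal, and suppose for contradiction that $I$ is not finitely generated. I would build a sequence $g_1, g_2, \ldots \in I$ greedily. At each stage, choose $g_{m+1}$ of minimal degree in $I \setminus (g_1,\ldots,g_m)$; this set is nonempty because $I$ is not finitely generated. By construction $\deg g_1 \leq \deg g_2 \leq \cdots$. Let $a_m \in K$ be the leading coefficient of $g_m$, and consider the ascending chain of ideals $(a_1) \subseteq (a_1,a_2) \subseteq \cdots$ in $K$.

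Since $K$ is Noetherian, this chain stabilizes, so for some $N$ we have $a_{N+1} = \sum_{m\leq N} r_m a_m$ with $r_m \in K$. Now set
\begin{align*}
h \coloneqq g_{N+1} - \sum_{m=1}^{N} r_m z^{\deg g_{N+1} - \deg g_m} g_m .
\end{align*}
The exponents are nonnegative because the degrees are monotone. The leading terms cancel, so $\deg h < \deg g_{N+1}$. Moreover $h \in I$, and $h \notin (g_1,\ldots,g_N)$, since otherwise $g_{N+1}$ would lie in that ideal. This contradicts the minimality of $\deg g_{N+1}$. Hence every ideal of $K[z]$ is finitely generated.

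For the stated special case there is a shortcut that I would record as well. Since $\mathbb{Q}$ is a field, its only ideals are $0$ and $\mathbb{Q}$, so it is trivially Noetherian and the general statement applies. Alternatively, $\mathbb{Q}[z]$ is a Euclidean domain, so every ideal is generated by a single element of minimal degree, via division with remainder.

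The only delicate point is the bookkeeping in the cancellation step: the degrees must be nondecreasing so that the powers $z^{\deg g_{N+1}-\deg g_m}$ make sense, and this is exactly why $g_{m+1}$ is chosen of minimal degree. In the application the result is used only for $\mathbb{Q}[z]$, where the principal-ideal argument avoids this issue entirely.
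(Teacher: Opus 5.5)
Your argument is correct. It is the standard leading-coefficient proof of Hilbert's basis theorem, and the bookkeeping checks out:
\begin{itemize}
\item $I\setminus(g_1,\ldots,g_m)$ is nonempty;
\item the degrees are nondecreasing;
\item $h\neq 0$, because $0\in(g_1,\ldots,g_N)$.
\end{itemize}
The only tacit step is passing from ``every ideal is finitely generated'' to the ascending chain condition. This is routine: apply finite generation to $\bigcup_m (a_1,\ldots,a_m)$.

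The paper gives no proof of this lemma. It records it as a classical fact. When the fact is used in Subsection \ref{sec:qualitativeLem}, the text appeals to $\mathbb{Q}[z]$ being a principal ideal domain, which is exactly your second ``shortcut''.

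So your general argument gives a self-contained proof for arbitrary Noetherian $K$, which the paper never needs. The PID route is what the application actually rests on. Either way one obtains what the paper uses: the ideal generated by all the minors $\Delta(M_S(z))$ is generated by finitely many of them. This holds because each of the finitely many generators is a finite combination of minors.
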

  
Let $A_{\ell,\infty}(z)$ denote the finite-column, infinite-row matrix \begin{align*}
    A_{\ell,\infty}(z)\coloneqq(z^{y^j}-1)_{\substack{1\leq j\leq \ell\\ y\geq 2}}.
\end{align*} 
We first address the case that  this matrix  does not have full  rank $\ell$.

\begin{lemma}\label{lem:defRankImpliesUnity}
    Suppose $\theta\in \overline{\mathbb{Q}}^\times$ is a nonzero algebraic number and $A_{\ell,\infty}(\theta)$ has (column) rank $r$ with $r<\ell$. Then, there exists some positive integer $s$ such that $\theta^s=1$ (i.e. $\theta$ is a root of unity).
\end{lemma}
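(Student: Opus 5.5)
We prove the contrapositive: if $\theta\in\overline{\mathbb{Q}}^\times$ is not a root of unity, then $A_{\ell,\infty}(\theta)$ has column rank $\ell$. Set $K\coloneqq\mathbb{Q}(\theta)$. Every entry $\theta^{y^j}-1$ lies in $K$, so a column dependency over $\overline{\mathbb{Q}}$ gives one over $K$ (the kernel of a matrix over $K$ is defined over $K$). So suppose there is $c=(c_1,\ldots,c_\ell)\in K^\ell\setminus\{0\}$ with
\begin{align*}
    \sum_{j=1}^\ell c_j\big(\theta^{y^j}-1\big)=0 \qquad \text{for all integers } y\geq 2 .
\end{align*}
Since $\theta$ is not a root of unity, Lemma \ref{lem:kronecker} gives a place $v$ of $K$ with $|\theta|_v\neq 1$. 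We fix an absolute value $|\cdot|_v$ in this class, which may be archimedean or non-archimedean. The only property we use is that $|\cdot|_v$ is multiplicative and satisfies the ordinary triangle inequality, so limits behave as usual. The idea is to let $y\to\infty$, where the exponents $y^j$ separate the terms at very different rates.

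\emph{Case $|\theta|_v>1$.} Let $J$ be the largest index with $c_J\neq 0$. Divide the relation by $\theta^{y^J}$ to get
\begin{align*}
    c_J+\sum_{j<J}c_j\,\theta^{-(y^J-y^j)}-\Big(\sum_{j}c_j\Big)\theta^{-y^J}=0 .
\end{align*}
For $j<J$ we have $y^J-y^j\geq y^{J-1}(y-1)\to\infty$, and $|\theta^{-1}|_v<1$. Hence every term except $c_J$ tends to $0$ in $|\cdot|_v$ as $y\to\infty$. This forces $|c_J|_v=0$, so $c_J=0$, a contradiction.

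\emph{Case $|\theta|_v<1$.} Now $\theta^{y^j}\to 0$ for every $j$, so letting $y\to\infty$ in the relation gives $\sum_j c_j=0$. The relation therefore reduces to $\sum_j c_j\theta^{y^j}=0$ for all $y\geq 2$. Let $J'$ be the smallest index with $c_{J'}\neq0$ and divide by $\theta^{y^{J'}}$ to get
\begin{align*}
    c_{J'}+\sum_{j>J'}c_j\,\theta^{\,y^j-y^{J'}}=0 .
\end{align*}
For $j>J'$ the exponents $y^j-y^{J'}$ tend to $\infty$ and $|\theta|_v<1$, so the sum tends to $0$. Hence $c_{J'}=0$, again a contradiction.

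In both cases no nonzero dependency exists, so the rank is $\ell$. This proves the lemma, and there is no real obstacle. The only points needing care are that the dependency can be taken over $K$, so that $|\cdot|_v$ applies to its coefficients, and that the limit arguments are valid at a non-archimedean place. The latter holds because such an absolute value is still a multiplicative absolute value satisfying the triangle inequality.
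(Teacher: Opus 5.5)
Your proof is correct and follows essentially the same route as the paper. Both use Lemma \ref{lem:kronecker} to obtain a place with $|\theta|_v\neq 1$ and then let $y\to\infty$. For $|\theta|_v>1$ this isolates the top nonzero coefficient. For $|\theta|_v<1$ one first shows $\sum_j c_j=0$ and then isolates the bottom nonzero coefficient. The one cosmetic difference is that you descend the dependency to $\mathbb{Q}(\theta)$, whereas the paper simply takes $K$ large enough to contain $\theta$ and all the $c_j$.
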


\begin{proof}
    Suppose that $A_{\ell,\infty}(\theta)$ has rank $r<\ell$ at some nonzero algebraic number $\theta$. Then, there exist algebraic numbers $c_1,\hdots,c_\ell$, not all zero, such that \begin{align*}
        \sum_{j=1}^\ell c_j(\theta^{y^j}-1)=0,\quad \forall y\geq 2.
    \end{align*} Let $C \coloneqq c_1+\hdots + c_\ell$, then $\sum_{j=1}^\ell c_j \theta^{y^j}=C$ for all $y\geq 2$. We will show that this forces $\theta$ to be a root of unity.

    Suppose by way of contradiction that $\theta$ is not a root of unity. Let $K$ be a number field containing $\theta$ and $c_1,\ldots,c_\ell$. By Lemma \ref{lem:kronecker}, there exists a place $v$ of $K$ such that $|\theta|_v\neq 1$; all absolute values below are taken with this place. We will use this to force a contradiction (and won't need any deeper structure about the place $v$).

    Consider first where $|\theta|_v>1$. Among the indices $j$ such that $c_j$ is nonzero, let $j_+$ be the largest. Dividing our constraint equation by $c_{j_+}\theta^{y^{j_+}}$, one obtains that \begin{align*}
        1+\sum_{1\leq j<j_+}\frac{c_j}{c_{j_+}}\theta^{y^j-y^{j_+}}=\frac{C}{c_{j_+}\theta^{y^{j_+}}}.
    \end{align*} By the triangle inequality for $|\cdot |_v$, then, \begin{align*}
        1\leq \Big|\frac{C}{c_{j_+}}\Big|_v|\theta|_v^{-y^{j_+}}+\sum_{1\leq j<j_+}\Big|\frac{c_j}{c_{j_+}}\Big|_v|\theta|_v^{y^j-y^{j_+}}.
    \end{align*} Since $|\theta|_v>1$ and maximality of $j_+$, by taking $y\rightarrow \infty$ we obtain a contradiction.

    Now consider where $|\theta|_v<1$. Considering the equation $\sum_{j=1}^\ell c_j\theta^{y^j}=C$ and taking $y\rightarrow \infty$, one observes that $C=0$. Thus, $\sum_{j=1}^\ell c_j\theta^{y^j}=0$. Now, among the indices $j$ such that $c_j$ is nonzero, let $j_-$ be the smallest. A similar argument as in the first case then gives a contradiction (dividing in this case by the lowest contributory monomial).
\end{proof}

We have shown that rank deficiency of the infinite matrix forces a nonzero algebraic number to be a root of unity. Now, in order to find a single finite cutoff \(M(\ell)\) for which the same conclusion holds, we express rank deficiency as the vanishing of polynomial minors and use the Noetherian property of $\mathbb{Q}[z]$ to reduce these infinitely many conditions to finitely many.  For a finite set $S\subset \mathbb{N}_{\geq 2}$ with $|S|=\ell$, let $M_S(z)\coloneqq(z^{y^j}-1)_{y\in S,\,1\leq j\leq \ell}$ denote the $\ell\times \ell$ submatrix of $A_{\ell,\infty}(z)$ with rows indexed by $S$, and let $\Delta(M_S(z))\in\mathbb{Z}[z]$ be its determinant. Throughout, $z$ is an indeterminate, and ``rank'' always refers to a specialization $z=\theta$ in a field. For a field $K$ and $\theta \in K$, a matrix with $\ell$ columns and entries in $K$ has rank less than $\ell$ if and only if all of its $\ell\times \ell$ minors vanish. Hence $A_{\ell,\infty}(\theta)$ has rank less than $\ell$ if and only if $\Delta(M_S(\theta))=0$ for every $S$.

Let $I\subseteq\mathbb{Q}[z]$ be the ideal generated by all the minors $\Delta(M_S(z))$.
Now, \( \mathbb{Q}[z] \) is a principal ideal domain. 
That is, every ideal is generated by a single element (yet the generator need not 
be one the minors).  
One may find a finite number of sets $S_1,...,S_t$ such that 
\begin{align*}
    I=(\Delta(M_{S_1}(z)),\hdots, \Delta(M_{S_t}(z))).
\end{align*}
Let $M(\ell)$ be the largest number occurring in any $S_1,\hdots, S_t$. 
Then all the generating minors are minors of the finite matrix $A_{\ell,M(\ell)}(z)$, whose rows are indexed by $2\leq y\leq M(\ell)$. Since each $S_i$ consists of $\ell$ distinct integers $\geq 2$, we have $M(\ell)\geq \ell+1$.

\begin{lemma}\label{lem:finitelyManyRowsSuffice}
    Let $K$ be a field of characteristic zero and let $\theta\in K$. Then the matrix  $A_{\ell,M(\ell)}(\theta)$ has rank less than $\ell$ if and only if $A_{\ell,\infty}(\theta)$ has rank less than $\ell$.
\end{lemma}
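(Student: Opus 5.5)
The plan is to express both rank conditions through the vanishing of $\ell\times\ell$ minors, and to transfer membership in the ideal $I$ from $\mathbb{Q}[z]$ to evaluations in $K$. Since $K$ has characteristic zero, it contains a copy of $\mathbb{Q}$, so there is a unique ring homomorphism $\mathrm{ev}_\theta:\mathbb{Q}[z]\to K$ sending $z\mapsto\theta$. Every minor $\Delta(M_S(z))$ lies in $\mathbb{Z}[z]\subseteq\mathbb{Q}[z]$, and $\mathrm{ev}_\theta(\Delta(M_S(z)))=\Delta(M_S(\theta))$, because the determinant is a polynomial in the entries and commutes with ring homomorphisms. Characteristic zero is used exactly here: it guarantees that $\mathrm{ev}_\theta$ is defined on all of $\mathbb{Q}[z]$, so that rational coefficients in ideal combinations make sense.

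For the easy direction, suppose $A_{\ell,\infty}(\theta)$ has rank less than $\ell$. Then every $\ell\times\ell$ minor of it vanishes. The matrix $A_{\ell,M(\ell)}(\theta)$ is the submatrix formed by the rows $2\le y\le M(\ell)$, so its $\ell\times\ell$ minors are among those of $A_{\ell,\infty}(\theta)$ and all vanish. Hence $A_{\ell,M(\ell)}(\theta)$ also has rank less than $\ell$. Equivalently, a submatrix has rank at most that of the full matrix.

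For the substantive direction, suppose $A_{\ell,M(\ell)}(\theta)$ has rank less than $\ell$. Then all of its $\ell\times\ell$ minors vanish at $\theta$. By the choice of $M(\ell)$, each generator $\Delta(M_{S_i}(z))$ is such a minor, so $\Delta(M_{S_i}(\theta))=0$ for $i=1,\ldots,t$. Now take an arbitrary finite $S\subset\mathbb{N}_{\ge2}$ with $|S|=\ell$. Since $\Delta(M_S(z))\in I=(\Delta(M_{S_1}(z)),\ldots,\Delta(M_{S_t}(z)))$, we can write
\begin{align*}
\Delta(M_S(z))=\sum_{i=1}^t a_i(z)\,\Delta(M_{S_i}(z)),\qquad a_i\in\mathbb{Q}[z].
\end{align*}
Applying $\mathrm{ev}_\theta$ gives $\Delta(M_S(\theta))=\sum_i a_i(\theta)\cdot 0=0$. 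So every $\ell\times\ell$ minor of $A_{\ell,\infty}(\theta)$ vanishes, and by the criterion recalled before the lemma, $A_{\ell,\infty}(\theta)$ has rank less than $\ell$.

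There is no real obstacle. The only point needing care is the finite generation of $I$ by minors themselves, which the text already arranges: $I$ is finitely generated by the Hilbert basis theorem, each of those generators is a finite combination of minors, and so finitely many minors suffice. The remaining subtlety is justifying that the evaluation map exists, which requires $\mathbb{Q}\subseteq K$ and is where the characteristic-zero hypothesis enters.
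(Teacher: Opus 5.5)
Your proposal is correct and follows the paper's proof essentially line for line. The easy direction passes to a submatrix. The substantive direction writes an arbitrary minor $\Delta(M_S(z))$ as a $\mathbb{Q}[z]$-combination of the finitely many generating minors $\Delta(M_{S_i}(z))$ and applies the evaluation homomorphism $\mathbb{Q}[z]\to K$, $z\mapsto\theta$, which exists exactly because $K$ has characteristic zero; your remark that finitely many minors themselves already generate $I$ is also correct.
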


\begin{proof}
    ($\implies$) Suppose that $A_{\ell,M(\ell)}(\theta)$ has rank less than $\ell$. Then all of its $\ell\times \ell$ minors vanish, and in particular $\Delta(M_{S_i}(\theta))=0$ for $1\leq i\leq t$. Let $S$ be arbitrary. Since $\Delta(M_S(z))\in I$, we can write $\Delta(M_S(z))=\sum_{i=1}^t a_i\Delta(M_{S_i}(z))$ with $a_i\in\mathbb{Q}[z]$. Evaluation at $\theta$ is a ring homomorphism $\mathbb{Q}[z]\to K$ (because $K\supseteq\mathbb{Q}$), so $\Delta(M_S(\theta))=\sum_i a_i(\theta)\Delta(M_{S_i}(\theta))=0$. Thus every $\ell\times \ell$ minor of $A_{\ell,\infty}(\theta)$ vanishes, and $A_{\ell,\infty}(\theta)$ has rank less than $\ell$.

    ($\impliedby$) If $A_{\ell,\infty}(\theta)$ has rank less than $\ell$, then so does its submatrix $A_{\ell,M(\ell)}(\theta)$.
\end{proof}

The characteristic-zero hypothesis in the preceding lemma is essential: the coefficients $a_i$ may have denominators, so the argument does not pass to characteristic $q$. This is why the argument below works directly with the finitely many generators.

Lemmas \ref{lem:defRankImpliesUnity} and \ref{lem:finitelyManyRowsSuffice} together imply the following: 

\begin{lemma}\label{lem:fintiteDefRankImpliesUnity}
    Suppose $\theta\in \overline{\mathbb{Q}}$ is an algebraic number and $A_{\ell,M(\ell)}(\theta)$ has (column) rank $r$ with $r<\ell$. Then $\theta$ is zero or a root of unity.
\end{lemma}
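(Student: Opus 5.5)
The proof is a direct combination of Lemmas \ref{lem:finitelyManyRowsSuffice} and \ref{lem:defRankImpliesUnity}, with one case split on whether $\theta$ vanishes. Let $\theta\in\overline{\mathbb{Q}}$ be such that $A_{\ell,M(\ell)}(\theta)$ has column rank $r<\ell$.

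If $\theta=0$, there is nothing to prove. So assume $\theta\neq 0$.

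Take $K=\overline{\mathbb{Q}}$, which is a field of characteristic zero containing $\theta$. The matrix $A_{\ell,M(\ell)}(\theta)$ has rank less than $\ell$, so the forward direction of Lemma \ref{lem:finitelyManyRowsSuffice} shows that $A_{\ell,\infty}(\theta)$ also has rank less than $\ell$. That direction expresses every minor $\Delta(M_S(z))$ as a $\mathbb{Q}[z]$-combination of the finitely many generators $\Delta(M_{S_i}(z))$. Each of these generators is a minor of $A_{\ell,M(\ell)}(z)$, and each vanishes at $\theta$.

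Since $\theta$ is a nonzero algebraic number and $A_{\ell,\infty}(\theta)$ has rank less than $\ell$, Lemma \ref{lem:defRankImpliesUnity} applies directly. It gives $\theta^s=1$ for some positive integer $s$, so $\theta$ is a root of unity.

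There is no real obstacle here; the only points to watch are the following.
\begin{itemize}
\item Lemma \ref{lem:finitelyManyRowsSuffice} is used with $K=\overline{\mathbb{Q}}$, which is legitimate because the coefficients $a_i\in\mathbb{Q}[z]$ can be evaluated at $\theta$.
\item The case $\theta=0$ must be separated off, because Lemma \ref{lem:defRankImpliesUnity} requires $\theta\in\overline{\mathbb{Q}}^\times$. This explains why the conclusion reads "zero or a root of unity."
\end{itemize}
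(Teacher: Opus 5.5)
Your proof is correct and is exactly the paper's argument. The paper obtains this lemma by combining Lemma \ref{lem:finitelyManyRowsSuffice}, applied in characteristic zero (here with $K=\overline{\mathbb{Q}}$) to pass from $A_{\ell,M(\ell)}(\theta)$ to $A_{\ell,\infty}(\theta)$, with Lemma \ref{lem:defRankImpliesUnity} for nonzero $\theta$, and splitting off $\theta=0$ is what accounts for the ``zero or a root of unity'' conclusion.
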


In order to prove Proposition \ref{prop:uniformTorsionRank} it remains to bound, uniformly in \(q\), the possible prime orders of nonzero elements $\lambda$ at which the reduced matrix $A_{\ell, M(\ell)}(\lambda)$ has deficient rank over $\overline{\mathbb{F}}_q$. Such an element $\lambda$ must be a common root of the reduced generating minors $\Delta(M_{S_i}(z))$ mod $q$. We therefore study the common roots of these polynomials. Recall that the ideal $I \subseteq \mathbb{Q}[z]$, which is generated by all of the minors of the infinite matrix, is finitely generated by the minors $\Delta(M_{S_1}(z)),\hdots, \Delta(M_{S_t}(z)))$. Let $D(z)$ be their greatest common divisor over $\mathbb{Q}[z]$: \begin{align*}
    D(z)\coloneqq\text{gcd}(\Delta(M_{S_1}(z)),\hdots,\Delta(M_{S_t}(z))).
\end{align*} The roots of $D$ are the common roots of all the generating minors. By Lemma \ref{lem:defRankImpliesUnity}, these roots are either roots of unity (if nonzero), or zero. Since $D$ has finite degree, there are only finitely many roots. The irreducible factors of $D$ over $\mathbb{Q}$ are therefore $z$ and cyclotomic polynomials. Normalizing the greatest common divisor to be monic, we may write \begin{align*}
    D(z)=z^a\prod_{m\in \mathcal{E}_\ell}\Phi_m(z)^{e_m},
\end{align*} where $a\geq 0$ is an integer, $\Phi_m$ is the $m$-th cyclotomic polynomial, $e_m$ are positive integers, and $\mathcal{E}_\ell$ is some finite set of positive integers. Since each $\Phi_m$ is monic with integer coefficients, $D$ is a monic polynomial in $\mathbb{Z}[z]$.

We now use B\'ezout's identity to show that, outside finitely many characteristics $q$, a common root of the reduced generating minors must be a root of the reduction of $D(z)$ mod $q$. Write $\Delta(M_{S_i}(z))=D(z)H_i(z)$ for $1\leq i\leq t$. Since $D$ is monic in $\mathbb{Z}[z]$, division in $\mathbb{Z}[z]$ gives $H_i\in\mathbb{Z}[z]$. By the definition of $D$, the polynomials $H_1,\ldots,H_t$ have greatest common divisor $1$ in $\mathbb{Q}[z]$. Thus by B\'ezout's identity (which applies since $\mathbb{Q}[z]$ is a PID), there exist $U_1,\ldots,U_t\in\mathbb{Q}[z]$ such that $\sum_i H_iU_i=1$. Clearing denominators gives $V_1,\ldots,V_t\in\mathbb{Z}[z]$ and a positive integer $N$, depending only on $\ell$, such that
\begin{align}\label{eq:charZeroFundamental}
    \sum_{i=1}^t H_i(z)V_i(z)=N.
\end{align}
Multiplying by $D(z)$ gives
\begin{align*}
    \sum_{i=1}^t\Delta(M_{S_i}(z))V_i(z)=N D(z).
\end{align*}

Let $q\nmid N$ be prime, and let $\lambda\in\overline{\mathbb{F}}_q$ be a common root of the reduced generating minors. Reducing this identity modulo $q$ and evaluating at $\lambda$ gives $0=\overline{N}\,\overline{D}(\lambda)$, where a bar denotes reduction modulo $q$. Since $\overline{N}\neq0$, we obtain $\overline{D}(\lambda)=0$. We call the primes $q\nmid N$ the \emph{good characteristics}; the finitely many primes dividing $N$ will be treated separately. For a nonzero $\lambda$ in a good characteristic, the cyclotomic factorization of $D$ now bounds its order, as the following lemma shows.

\begin{lemma}\label{lem:goodCharacteristics}
    Suppose $q\nmid N$, and that $\lambda \in \overline{\mathbb{F}}_q^\times$ is a root of the reduction of $D(z)=z^a \prod_{m\in \mathcal{E}_\ell}\Phi_m(z)^{e_m}$ modulo $q$ (call this $\overline{D}(z)$). Then, there exists some $m\in \mathcal{E}_\ell$ such that $\lambda^m=1$.
\end{lemma}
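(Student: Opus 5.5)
Since $\overline{D}$ is a product of factors and $\overline{\mathbb{F}}_q$ is a field (hence an integral domain), the plan is to locate the single factor that vanishes at $\lambda$.

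First, reduce the factorization modulo $q$. Reduction $\mathbb{Z}[z]\to\mathbb{F}_q[z]$ is a ring homomorphism, so
\begin{align*}
    \overline{D}(z)=z^a\prod_{m\in\mathcal{E}_\ell}\overline{\Phi_m}(z)^{e_m},
\end{align*}
where $\overline{\Phi_m}$ is the reduction of $\Phi_m$. Evaluating at $\lambda$ is also a ring homomorphism into the field $\overline{\mathbb{F}}_q$, so $\overline{D}(\lambda)=0$ forces one factor to vanish at $\lambda$. The factor $\lambda^a$ cannot vanish because $\lambda\neq0$. Hence $\overline{\Phi_m}(\lambda)=0$ for some $m\in\mathcal{E}_\ell$.

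Next, I use the identity $z^m-1=\prod_{d\mid m}\Phi_d(z)$ in $\mathbb{Z}[z]$. Reducing it modulo $q$ shows that $\overline{\Phi_m}$ divides $z^m-1$ in $\mathbb{F}_q[z]$. Therefore $\lambda^m-1=0$, that is, $\lambda^m=1$, which is the claim.

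The only point that needs care is that $\Phi_m$ may no longer be irreducible or separable modulo $q$, and $q$ may even divide $m$. None of this matters here, because the divisibility $\overline{\Phi_m}\mid z^m-1$ holds over any ring once the integer identity is reduced. So I expect no real obstacle. I only need to note that $D$ and each $\Phi_m$ have integer coefficients, as established above, so that reduction modulo $q$ makes sense. The hypothesis $q\nmid N$ is not used inside this argument; it enters earlier, to guarantee that $\overline{D}(\lambda)=0$.
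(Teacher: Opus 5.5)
Your proposal is correct and follows the same route as the paper. Both arguments use $\lambda\neq0$ to place $\lambda$ at a root of some $\overline{\Phi_m}$ with $m\in\mathcal{E}_\ell$, and then reduce the divisibility $\Phi_m\mid z^m-1$ modulo $q$ to conclude $\lambda^m=1$. You are also right that the hypothesis $q\nmid N$ plays no role inside this step.
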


\begin{proof}
    Since $\lambda\neq 0$, one has that $\lambda$ is the root of some $\overline{\Phi_m}$, with $m\in \mathcal{E}_\ell$. Since $\Phi_m(z)|(z^m-1)$, the same divisibility holds after reducing modulo $q$, and so $\lambda^m=1$.
\end{proof}

We now turn our view to the \textit{exceptional characteristics}, namely those primes $q$ such that $q|N$. The exceptional characteristics require one more property of the minors: they never vanish identically modulo a prime.

\begin{lemma}\label{lem:minorsMonic}
    Let $2\leq y_1<y_2<\cdots<y_\ell$ be integers. Then the minor
    \begin{align*}
        \Delta(z) \coloneqq \det\big(z^{y_r^j}-1\big)_{1\leq r,j\leq \ell}\in \mathbb{Z}[z]
    \end{align*}
    is monic of degree $\sum_{r=1}^\ell y_r^{r}$. In particular, for every prime $q$ its reduction $\overline{\Delta}\in \mathbb{F}_q[z]$ is a nonzero polynomial of the same degree.
\end{lemma}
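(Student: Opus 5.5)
We will expand the determinant by the Leibniz formula and show that a single permutation term carries the strictly largest degree. Write $\Delta(z)=\sum_{\sigma\in S_\ell}\operatorname{sgn}(\sigma)\prod_{r=1}^\ell\big(z^{y_r^{\sigma(r)}}-1\big)$. Each product $\prod_r(z^{y_r^{\sigma(r)}}-1)$ is a polynomial in $\mathbb{Z}[z]$ with leading term $z^{e(\sigma)}$, where
\begin{align*}
    e(\sigma)\coloneqq\sum_{r=1}^\ell y_r^{\sigma(r)},
\end{align*}
and leading coefficient $1$. So it suffices to show that $e(\sigma)$ is uniquely maximized at $\sigma=\mathrm{id}$, whose sign is $+1$. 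Then $\Delta$ has leading term $z^{\sum_r y_r^r}$ with coefficient $1$, and all other terms have strictly smaller degree.

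For the maximization we use a rearrangement (exchange) argument. Suppose $\sigma\neq\mathrm{id}$. Then there exist $r<s$ with $\sigma(r)>\sigma(s)$; set $a=\sigma(s)<b=\sigma(r)$. Composing $\sigma$ with the transposition swapping the values at $r$ and $s$ changes $e$ by
\begin{align*}
    \big(y_r^a+y_s^b\big)-\big(y_r^b+y_s^a\big)=y_s^a\big(y_s^{b-a}-1\big)-y_r^a\big(y_r^{b-a}-1\big).
\end{align*}
This is strictly positive because $t\mapsto t^a(t^{b-a}-1)$ is strictly increasing on $t\geq 1$ and $y_s>y_r\geq 2$. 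Each such swap strictly increases $e$ and reduces the number of inversions. Iterating therefore reaches the identity with $e(\mathrm{id})>e(\sigma)$. Equivalently, since $j\mapsto y^j$ has strictly increasing differences in $y$ (supermodularity of $(y,j)\mapsto y^j$), the identity pairing is the strict unique maximizer.

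The monicity claim follows from this: the coefficient of $z^{e(\mathrm{id})}$ in $\Delta$ is exactly $\operatorname{sgn}(\mathrm{id})\cdot 1=1$. For the reduction modulo $q$, a monic integer polynomial of degree $d$ reduces to a monic, hence nonzero, polynomial of the same degree $d$ in $\mathbb{F}_q[z]$.

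The only step needing care is the strict inequality in the exchange. It uses $y_r\geq 2$ (in fact only $y_s>y_r\geq1$) together with the distinctness and ordering of the $y_r$, so that ties cannot cancel the top term. This is a short monotonicity check rather than a genuine obstacle.
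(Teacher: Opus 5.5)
Your proposal is correct and follows essentially the same route as the paper. Both expand by the Leibniz formula and then show via an exchange argument on an inversion pair, using the strict monotonicity of $t\mapsto t^a(t^{b-a}-1)$, that the identity permutation uniquely maximizes $e(\sigma)$; the statement modulo $q$ then follows from monicity.
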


\begin{proof}
    Expanding the determinant,
    \begin{align*}
        \Delta(z) = \sum_{\sigma\in \textup{Sym}_\ell}\sgn(\sigma)\prod_{r=1}^\ell\big(z^{y_r^{\sigma(r)}}-1\big).
    \end{align*}
    The term indexed by $\sigma$ is a polynomial of degree $e(\sigma)\coloneqq\sum_{r=1}^\ell y_r^{\sigma(r)}$ with leading coefficient $\sgn(\sigma)$. We claim that $e(\sigma)<e(\mathrm{id})$ for every $\sigma\neq \mathrm{id}$. Then the monomial $z^{e(\mathrm{id})}$ occurs only in the term $\sigma = \mathrm{id}$, with coefficient $+1$, which proves the lemma.

    We prove the claim by induction on the number of inversions of $\sigma$. If $\sigma\neq\mathrm{id}$, there are $r<s$ with $a\coloneqq\sigma(s)<b\coloneqq\sigma(r)$. Let $\tau$ agree with $\sigma$ except that $\tau(r)=a$ and $\tau(s)=b$. Then $\tau$ has fewer inversions than $\sigma$, and 
    \begin{align*}
        e(\tau)-e(\sigma) = \big(y_s^{b}-y_s^{a}\big)-\big(y_r^{b}-y_r^{a}\big) = \phi(y_s)-\phi(y_r), \qquad \phi(t)\coloneqq t^{a}\big(t^{b-a}-1\big).
    \end{align*}
    The function $\phi$ is strictly increasing on $[1,\infty)$, and $y_s>y_r\geq 2$, so $e(\tau)>e(\sigma)$. By induction $e(\tau) \leq e(\mathrm{id})$, and hence $e(\sigma)<e(\mathrm{id})$.
\end{proof}

\begin{lemma}\label{lem:exceptionalCharacteristics}
    There is a positive integer $c_\ell$, depending only on $\ell$, with the following property. Suppose $q\mid N$ is prime and that $\lambda\in \overline{\mathbb{F}}_q^\times$ is a common root of $\overline{\Delta(M_{S_1}(z))},\hdots,\overline{\Delta(M_{S_t}(z))}$. Then $\lambda^{c_\ell}=1$.
\end{lemma}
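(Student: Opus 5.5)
The plan is a degree argument using Lemma \ref{lem:minorsMonic}. The key facts are that $N$ depends only on $\ell$, so the set of exceptional primes $q\mid N$ is finite, and that each reduced generating minor is a nonzero polynomial over $\F_q$. Fix one generating minor, say $\Delta_1(z)\coloneqq\Delta(M_{S_1}(z))$. By Lemma \ref{lem:minorsMonic}, its reduction $\overline{\Delta_1}$ is monic of degree $d_1\coloneqq\sum_{r=1}^\ell y_r^{r}$, where $S_1=\{y_1<\cdots<y_\ell\}$. In particular, $d_1$ is bounded in terms of $\ell$ alone, since $S_1$ was fixed once and for all in the construction of $M(\ell)$. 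Any common root $\lambda\in\overline{\F}_q^\times$ of the reduced minors is therefore a nonzero root of $\overline{\Delta_1}$.

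\emph{Bounding the order.} A nonzero root $\lambda$ of a nonzero polynomial of degree $d_1$ over $\F_q$ lies in $\F_{q^{e}}$ for some $e\leq d_1$, because its minimal polynomial over $\F_q$ divides $\overline{\Delta_1}$ and so has degree at most $d_1$. Consequently $\lambda^{q^{e}-1}=1$ for some $1\leq e\leq d_1$. Now take
\begin{align*}
    c_\ell\coloneqq \prod_{q\mid N}\ \prod_{e=1}^{d_1}\big(q^{e}-1\big),
\end{align*}
a positive integer depending only on $\ell$, since both $N$ and $d_1$ depend only on $\ell$. The order of $\lambda$ divides $q^e-1$ for the relevant exceptional prime $q$ and some $e\le d_1$, and $q^e-1$ in turn divides $c_\ell$. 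Hence $\lambda^{c_\ell}=1$. (Alternatively, one could take $\operatorname{lcm}$ in place of the product; only finiteness matters here.)

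\emph{Where care is needed.} The only subtle point is that we must not use the gcd $D$ or the B\'ezout relation in these characteristics, because reduction modulo $q\mid N$ destroys that relation. The argument above avoids this by relying solely on the non-vanishing of a single reduced minor, which is exactly what Lemma \ref{lem:minorsMonic} supplies. The hypothesis $\lambda\neq0$ is needed so that $\lambda$ lies in the multiplicative group $\F_{q^e}^\times$. Together with Lemma \ref{lem:goodCharacteristics}, this yields that every nonzero deficient $\lambda$ in any characteristic has order dividing $c_\ell$ or some $m\in\mathcal{E}_\ell$. Choosing $P(\ell)$ larger than all prime divisors of these integers then completes the qualitative proof of Proposition \ref{prop:uniformTorsionRank}.
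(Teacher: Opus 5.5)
Your proposal is correct and follows essentially the same argument as the paper. Both use Lemma \ref{lem:minorsMonic} to get a nonzero reduced polynomial of degree at most $\deg\Delta(M_{S_1}(z))$ vanishing at $\lambda$, conclude $\lambda\in\F_{q^r}^\times$ with $r$ bounded by that degree, and take $c_\ell$ to be an lcm (your product works equally well) of the finitely many $q^r-1$ with $q\mid N$. The only cosmetic difference is that the paper passes through the gcd $G_q$ of the reduced minors, whereas you use $\overline{\Delta(M_{S_1}(z))}$ directly, as the paper itself later does in Lemma \ref{lem:quantitativeExceptionalCharacteristics}.
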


\begin{proof}
    Fix a prime $q\mid N$, and let $G_q(z)$ be the greatest common divisor in $\mathbb{F}_q[z]$ of the reductions $\overline{\Delta(M_{S_1}(z))},\hdots,\overline{\Delta(M_{S_t}(z))}$. By Lemma \ref{lem:minorsMonic}, each of these reductions is a nonzero polynomial. Hence $G_q\neq 0$, and 
    \begin{align*}
        d_q\coloneqq\deg G_q\leq \deg \overline{\Delta(M_{S_1}(z))} = \deg \Delta(M_{S_1}(z)).
    \end{align*}
    (Without this, $G_q$ could be the zero polynomial, every $\lambda$ would be a common root, and the argument would give no information.) Every common root $\lambda$ is a root of $G_q$, and the roots of the nonzero polynomial $G_q$ lie in extensions of $\F_q$ of degree at most $d_q$: that is, $\lambda\in \mathbb{F}_{q^r}$ for some $1\leq r\leq d_q$. If $\lambda\neq 0$, then $\lambda^{q^r-1}=1$. Set
    \begin{align*}
        c_\ell\coloneqq \operatorname{lcm}\big\{q^r-1:\ q \text{ prime},\ q\mid N,\ 1\leq r\leq d_q\big\}.
    \end{align*}
    This is a finite least common multiple, since $N$ has finitely many prime divisors, and it depends only on $\ell$, since $N$, the sets $S_1,\ldots,S_t$ and hence each $d_q$ depend only on $\ell$. Every $\lambda$ as in the statement satisfies $\lambda^{c_\ell}=1$.
\end{proof}

We now conclude our qualitative proof of Proposition \ref{prop:uniformTorsionRank}.

\begin{proof}[Qualitative proof of Proposition \ref{prop:uniformTorsionRank}] Choose $M=M(\ell)$ as defined before Lemma \ref{lem:finitelyManyRowsSuffice}, and choose 
    \begin{align*}
        P > \max\Big(\mathcal{E}_\ell \cup \{c_\ell\}\Big),
    \end{align*}
    with $c_\ell$ as in Lemma \ref{lem:exceptionalCharacteristics}. Both quantities depend only on $\ell$. Let $p>P$ and $q\neq p$ be primes, and let $\lambda\in\overline{\mathbb{F}}_q^\times$ have multiplicative order $p$. Suppose, for contradiction, that $A_{\ell,M}(\lambda)$ has rank less than $\ell$. Then every $\ell\times \ell$ minor of $A_{\ell,M}(\lambda)$ vanishes. The generating sets $S_1,\ldots,S_t$ lie in $\{2,\ldots,M(\ell)\}$, and the entries $z^{y^j}-1$ have integer coefficients, so evaluating the reduction $\overline{\Delta(M_{S_i}(z))}$ at $\lambda$ gives a $\ell \times \ell$ minor of $A_{\ell,M}(\lambda)$. Hence $\lambda$ is a common root of $\overline{\Delta(M_{S_1}(z))},\ldots,\overline{\Delta(M_{S_t}(z))}$.

    If $q\nmid N$, the discussion above gives $\overline{D}(\lambda)=0$, and Lemma \ref{lem:goodCharacteristics} gives $\lambda^m=1$ for some $m\in\mathcal{E}_\ell$. Since $\lambda$ has order $p$, this forces $p\mid m$, so $p\leq m<P$, a contradiction. If $q\mid N$, Lemma \ref{lem:exceptionalCharacteristics} gives $\lambda^{c_\ell}=1$, so $p\mid c_\ell$ and $p\leq c_\ell<P$, again a contradiction. Hence $A_{\ell,M}(\lambda)$ has rank $\ell$.
\end{proof}

\subsection{Quantitative proof of Proposition \ref{prop:uniformTorsionRank}}\label{sec:quantitativeLem}

We now give explicit bounds for the parameters $M(\ell)$ and $P(\ell)$ in Proposition \ref{prop:uniformTorsionRank}. We begin by proving Lemma \ref{thm:quantitativeRankDeficiency} which is a quantitative version of Lemma \ref{lem:fintiteDefRankImpliesUnity} from Subsection \ref{sec:qualitativeLem}, and then we bound the quantities arising in the passage to finite characteristic. Our first step is the following determinant estimate.

\begin{lemma}\label{lem:smallMinors}
    Let $\ell\geq1$ and $L\geq\ell+1$ be integers, and let $\alpha\in\C$. Suppose that
    \begin{equation*}
        \det \left(\alpha^{(L - \ell + r)^j}-1\right)_{1 \leq j \leq \ell,\  1 \leq r \leq \ell} = \det\begin{bmatrix}
            \alpha^{(L - \ell + 1)}-1 & \alpha^{(L - \ell + 1)^2}-1 & \cdots & \alpha^{(L - \ell + 1)^\ell}-1\\
            \alpha^{(L - \ell + 2)}-1 & \alpha^{(L - \ell + 2)^2}-1 & \cdots & \alpha^{(L - \ell + 2)^\ell}-1\\
            \vdots & \vdots & \ddots & \vdots\\
            \alpha^{L}-1 & \alpha^{L^2}-1 & \cdots & \alpha^{L^\ell}-1
        \end{bmatrix}=0.
    \end{equation*}
    Then
    \begin{align*}
        |\alpha|\leq\exp\!\left(\frac{\ell\log(\ell+1)}{L-\ell}\right).
    \end{align*}
\end{lemma}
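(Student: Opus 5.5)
We argue by contraposition: assume $|\alpha|>\exp\!\big(\ell\log(\ell+1)/(L-\ell)\big)$, so in particular $|\alpha|>1$, and show the determinant is nonzero. Write $y_r\coloneqq L-\ell+r$ for $1\le r\le\ell$, so the matrix is $(\alpha^{y_r^j}-1)_{r,j}$. As in the proof of Lemma \ref{lem:minorsMonic}, expand the determinant over permutations:
\begin{align*}
\det = \sum_{\sigma\in\mathrm{Sym}_\ell}\sgn(\sigma)\prod_{r=1}^\ell\big(\alpha^{y_r^{\sigma(r)}}-1\big).
\end{align*}
Define $e(\sigma)\coloneqq\sum_r y_r^{\sigma(r)}$. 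Lemma \ref{lem:minorsMonic} shows that $e(\mathrm{id})$ is the unique maximum. The plan is to show that the identity term dominates all others in absolute value.

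\emph{Normalization.} Divide by $\alpha^{e(\mathrm{id})}$. For the identity term we get $\prod_r(1-\alpha^{-y_r^r})$. Each factor has modulus at least $1-|\alpha|^{-y_r}$, and $y_r\ge L-\ell+1$. For $\sigma\neq\mathrm{id}$, the normalized term has modulus at most
\begin{align*}
|\alpha|^{e(\sigma)-e(\mathrm{id})}\prod_r\big(1+|\alpha|^{-y_r^{\sigma(r)}}\big)\le 2^{\ell}\,|\alpha|^{-(e(\mathrm{id})-e(\sigma))}.
\end{align*}
Rather than use the crude $2^\ell$ factor, we will expand the products $\prod(\alpha^{a}-1)$ fully as signed sums of monomials $\alpha^{\sum_{r\in T}y_r^{\sigma(r)}}$. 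Every monomial other than the leading $\alpha^{e(\mathrm{id})}$ then has exponent at most $e(\mathrm{id})-(L-\ell)$. The key quantitative input is a gap estimate: for $\sigma\neq\mathrm{id}$,
\begin{align*}
e(\mathrm{id})-e(\sigma)\ge L-\ell .
\end{align*}
We get this from the transposition step in Lemma \ref{lem:minorsMonic}. Each inversion-reducing swap increases $e$ by $\phi(y_s)-\phi(y_r)$, where $\phi(t)=t^a(t^{b-a}-1)$ with $b>a\ge1$. By the mean value theorem this is at least $\phi'(y_r)\ge y_r\ge L-\ell+1$. Dropping a factor $\alpha^{y}$ in favour of $-1$ likewise lowers the exponent by at least $y_r\ge L-\ell+1$.

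\emph{Counting.} The full expansion has at most $\ell!\,2^\ell$ monomials. A cleaner count is to group them by exponent, which gives fewer than $(\ell+1)^\ell$ terms. Each non-leading monomial has normalized modulus at most $|\alpha|^{-(L-\ell)}$. Hence
\begin{align*}
\Big|\frac{\det}{\alpha^{e(\mathrm{id})}}\Big|\ge 1-\big((\ell+1)^\ell-1\big)\,|\alpha|^{-(L-\ell)},
\end{align*}
and this is positive as soon as $|\alpha|^{L-\ell}\ge(\ell+1)^\ell$, which is exactly our hypothesis on $|\alpha|$. The determinant is therefore nonzero, which proves the contrapositive.

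\emph{Main obstacle.} The hard step is the bookkeeping that makes the number of non-leading terms at most $(\ell+1)^\ell-1$ rather than $\ell!\,2^\ell$. The naive count yields $\log(\ell!\,2^\ell)\approx\ell\log\ell$, which still has the right order but the wrong constant. To get the stated constant, I would bound the number of monomials by the number of tuples specifying, for each row $r$, either the dropped factor ($-1$) or the column index $\sigma(r)\in\{1,\dots,\ell\}$, giving $(\ell+1)^\ell$ choices. The permutation constraint is relaxed to arbitrary maps, which only enlarges the set, and each tuple contributes coefficient of modulus at most $1$ after grouping. If grouping becomes delicate, I will instead bound $\ell!\,2^\ell\le(\ell+1)^\ell$, which holds for all $\ell\ge1$ and gives the same conclusion.
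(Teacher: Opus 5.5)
Your argument is correct and is essentially the paper's. Like the paper, you expand the determinant, show that every monomial other than $\alpha^{e(\mathrm{id})}$ has exponent at most $e(\mathrm{id})-(L-\ell)$, and compare $|\alpha|^{e(\mathrm{id})}$ against the remaining at most $2^\ell\ell!$ monomials; the paper obtains the exponent gap via adjacent transpositions rather than your mean-value step.

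Your ``main obstacle'' is not a real obstacle. The naive count together with $2^\ell\ell!\le(\ell+1)^\ell$ (by AM--GM) is exactly what the paper uses, and it already gives the stated constant. So you can drop the grouping bookkeeping entirely; its claim that each tuple contributes a coefficient of modulus at most $1$ is left unjustified in your write-up anyway.
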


\begin{proof}
    The conclusion is immediate if $|\alpha|\leq1$, so suppose that $|\alpha|>1$. Let $\Delta(z)$ be the polynomial defined by
      \[ \Delta(z) = \det \left(z^{(L - \ell + r)^j}-1\right)_{1 \leq j \leq \ell,\  1 \leq r \leq \ell}, \]
      so by assumption we have $\Delta(\alpha) = 0$. 
      We will show that $\Delta(z)$ has leading term $z^d$, and every other term in the polynomial has degree at most $d - L + \ell$, where $d := \sum_{r=1}^\ell (L - \ell + r)^r $. Expanding the determinant gives
    \begin{align} \label{deltaDef}
        \Delta(z)=\sum_{\sigma\in S_\ell}\sgn(\sigma)
        \prod_{r=1}^\ell \big(z^{(L - \ell + r)^{\sigma(r)}}-1\big),
    \end{align}
    where $S_\ell$ is the set of permutations of $\{1,\ldots,\ell\}$. Note that for $\sigma \in S_\ell$, the product 
      \begin{equation}\label{permProd} \prod_{r=1}^\ell\big(z^{(L - \ell + r)^{\sigma(r)}}-1\big) \end{equation}
  has degree $e(\sigma)\coloneqq\sum_{r = 1}^\ell (L - \ell + r)^{\sigma(r)}$. We claim that $e(\sigma)\leq \left(\sum_{r=1}^\ell (L - \ell + r)^r \right)-L + \ell $ whenever $\sigma\neq\mathrm{id}$. To see this, choose $1\leq r<\ell$ such that $\sigma(r+1)<\sigma(r)$. Let $\tau\in S_\ell$ be the transposition which swaps $r$ and $r+1$, so that $\sigma\circ\tau$ is obtained from $\sigma$ by swapping these two values. Then, letting $v = L - \ell + r$, we have
    \begin{align*}
        e(\sigma\circ\tau)-e(\sigma)
        &=\big((v+1)^{\sigma(r)}-(v+1)^{\sigma(r+1)}\big)
          -\big(v^{\sigma(r)}-v^{\sigma(r+1)}\big)\\
        &=(v+1)^{\sigma(r+1)}
          \big((v+1)^{\sigma(r)-\sigma(r+1)}-v^{\sigma(r)-\sigma(r+1)}\big)\\
        &\quad+\big((v+1)^{\sigma(r+1)}-v^{\sigma(r+1)}\big)
          \big(v^{\sigma(r)-\sigma(r+1)}-1\big)\\
        &\geq(v+1)^{\sigma(r+1)}\geq v\geq L-\ell.
    \end{align*}
    In the above formulae we use the fact that $v\geq L-\ell\geq1$ and $\sigma(r)>\sigma(r+1)\geq1$. Repeatedly applying transpositions in this way to $\sigma$ will eventually result in the identity permutation, so 
      \[ e(\sigma) \leq e(\text{id}) - (L-\ell) = \left(\sum_{r=1}^\ell (L - \ell + r)^r \right)-L + \ell . \]

 The product 
      \begin{equation} \label{idProd} \prod_{r=1}^\ell \big(z^{(L - \ell + r)^{r}}-1\big)\end{equation}
      has leading term $z^d$. Every other monomial in this product is obtained by choosing $-1$ from at least one factor, lowering the degree by at least $\min_{1 \leq r \leq \ell} (L - \ell + r)^r\geq L - \ell $, thus each term other than the leader term has degree $\leq d - L + \ell$. 

It follows from our observations that $\Delta(z)=z^d + R(z)$ where $\deg(R(z)) \leq d - L + \ell$. Let $r_0,r_1,\ldots,r_{d-L + \ell }$ be the coefficients of $R(z)$, so $R(z) = \sum_{j=0}^{d - L + \ell} r_j z^j$. Expanding all $\ell!$ products in formula \eqref{deltaDef} gives $2^\ell\ell!$ monomial terms with coefficients $\pm 1$ before collecting like powers, so $\sum_{j=0}^{d - L + \ell } |r_j|\leq2^\ell\ell!$.

    Since $\Delta(\alpha)=\alpha^d + R(\alpha) = 0$, it follows that
    \begin{align*}
       |\alpha|^d
        &= |R(\alpha)| = \left| \sum_{j=0}^{d - L + \ell} r_j \alpha^j\right| \leq|\alpha|^{d - L + \ell} \sum_{j=0}^{d - L + \ell}|r_j|  \leq2^\ell\ell!|\alpha|^{d-L + \ell}.
    \end{align*}
    By the AM-GM inequality,
    \begin{align*}
        \ell!=\prod_{r=1}^\ell r\leq\left(\frac{\ell+1}{2}\right)^\ell,
    \end{align*}
    hence $2^\ell \ell! \leq (\ell + 1)^\ell$, and so $|\alpha|^d \leq (\ell + 1)^\ell |\alpha|^{d - L + \ell}$. Dividing by $|\alpha|^{d-L+\ell}$ and taking logarithms therefore gives
    \begin{align*}
        \log|\alpha|\leq\frac{\log(2^\ell\ell!)}{L-\ell}
        \leq\frac{\ell\log(\ell+1)}{L-\ell},
    \end{align*}
    as desired.
\end{proof}

To turn the estimate of Lemma \ref{lem:smallMinors} into a bound on the constant $M(\ell)$ from Lemma \ref{lem:fintiteDefRankImpliesUnity}, we use the following theorem of Dimitrov, which gives a lower bound for the largest absolute value of a conjugate of a nonzero algebraic integer that is not a root of unity and has degree greater than $1$.

\begin{theorem}[{\cite[Theorem 1]{VD}}]\label{thm:dimitrov}
    Suppose $Q(z)\in\Z[z]$ is a monic irreducible polynomial of degree $d>1$. If $Q$ is not cyclotomic, then
    \begin{align*}
        \max_{Q(\beta)=0}|\beta|\geq2^{1/(4d)}.
    \end{align*}
\end{theorem}

We now use Lemma \ref{lem:smallMinors} and Theorem \ref{thm:dimitrov} to obtain the following quantitative version of Lemma \ref{lem:fintiteDefRankImpliesUnity}.

\begin{lemma}\label{thm:quantitativeRankDeficiency}
    There exists an absolute constant $\kappa>0$ such that the following holds. Let $\ell\geq1$ be an integer, let $M = M(\ell) \geq2$ be an integer satisfying $M(\ell)\geq e^{\kappa\ell\log\ell}$, and let $\alpha\in\overline{\mathbb{Q}}$. If
    \begin{equation}\label{eq:quantitativeRankDeficiency}
        \operatorname{rank}A_{\ell,M}(\alpha)
        =\operatorname{rank}\begin{bmatrix}
            \alpha^2-1 & \alpha^{2^2}-1 & \cdots & \alpha^{2^\ell}-1\\
            \alpha^3-1 & \alpha^{3^2}-1 & \cdots & \alpha^{3^\ell}-1\\
            \vdots & \vdots & \ddots & \vdots\\
            \alpha^M-1 & \alpha^{M^2}-1 & \cdots & \alpha^{M^\ell}-1
        \end{bmatrix}<\ell,
    \end{equation}
    then $\alpha$ is $0$ or a root of unity.
\end{lemma}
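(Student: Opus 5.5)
Assume $\alpha\neq 0$ is not a root of unity and that $\operatorname{rank}A_{\ell,M}(\alpha)<\ell$; we aim for a contradiction. The plan is to apply Lemma \ref{lem:smallMinors} not only to $\alpha$ but to every Galois conjugate of $\alpha$. Then we use Dimitrov's Theorem \ref{thm:dimitrov} when $\alpha$ is an algebraic integer, and a separate argument when it is not. Since the matrix $A_{\ell,M}(z)$ has integer entries as polynomials in $z$, rank deficiency at $\alpha$ means that every $\ell\times\ell$ minor, an element of $\Z[z]$, vanishes at $\alpha$. Each minor then vanishes at every conjugate $\beta$ of $\alpha$, so $A_{\ell,M}(\beta)$ is rank deficient for every conjugate $\beta$. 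Take $L=M$ and the last $\ell$ rows $y=M-\ell+1,\ldots,M$, which lie in $\{2,\ldots,M\}$ once $M\geq \ell+1$. Lemma \ref{lem:smallMinors} then gives, for every conjugate $\beta$,
\begin{align*}
|\beta|\leq \exp\!\Big(\frac{\ell\log(\ell+1)}{M-\ell}\Big).
\end{align*}

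\emph{Integrality.} The same minor $\Delta(z)$ used above is monic in $\Z[z]$ by Lemma \ref{lem:minorsMonic}, and $\Delta(\alpha)=0$. Hence $\alpha$ is an algebraic integer, and its minimal polynomial $Q$ is monic in $\Z[z]$ and divides $\Delta$. This step is free, which is why the non-archimedean places from the qualitative proof are not needed here.

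\emph{Degree bound and Dimitrov.} If $\deg Q=1$, then $\alpha$ is a nonzero rational integer other than $\pm1$, so $|\alpha|\geq 2$. The displayed bound fails once $\frac{\ell\log(\ell+1)}{M-\ell}<\log 2$. If $\deg Q=d>1$ and $Q$ is not cyclotomic, Theorem \ref{thm:dimitrov} gives a conjugate $\beta$ with $\log|\beta|\geq \frac{\log 2}{4d}$. We then need
\begin{align*}
\frac{\ell\log(\ell+1)}{M-\ell}<\frac{\log 2}{4d},
\end{align*}
so we must bound $d$ in terms of $\ell$ and $M$. This is the main obstacle: the naive bound $d\leq\deg\Delta\approx M^\ell$ is circular.

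To break the circularity, I would pick a fixed block of $\ell$ rows with small entries, for instance $y=2,\ldots,\ell+1$, to bound $d$. That minor is also nonzero, being monic by Lemma \ref{lem:minorsMonic}, and $Q$ divides it. This gives $d\leq \sum_{r=1}^{\ell}(r+1)^r\leq \ell(\ell+1)^\ell=\exp(O(\ell\log\ell))$, independent of $M$. Separately, Lemma \ref{lem:smallMinors} applied with the top block $L=M$ bounds the conjugates. The two conditions together require $M-\ell\gtrsim 4\ell\log(\ell+1)\cdot \ell(\ell+1)^\ell/\log 2$, which holds when $M\geq e^{\kappa\ell\log\ell}$ for a suitable absolute constant $\kappa$. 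Under this condition we get the contradiction, so $Q$ is cyclotomic, that is, $\alpha$ is a root of unity, or else $\alpha=0$.
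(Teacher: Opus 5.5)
Your proof is correct and is essentially the paper's argument. It uses the same three ingredients: Lemma \ref{lem:smallMinors} with $L=M$ applied to every conjugate, the monic minor on rows $2,\ldots,\ell+1$ to get $d\leq d_\ell=\sum_{r=1}^\ell(r+1)^r$, and Dimitrov's theorem. It also reaches the same cutoff $M-\ell>4d_\ell\,\ell\log(\ell+1)/\log 2$. The paper routes this through the monic gcd $D$ of all minors, shown to lie in $\Z[z]$ by Gauss's lemma, whereas you work directly with the minimal polynomial of $\alpha$, which is a slightly leaner path to the same conclusion.

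The one small omission is the case $\ell=1$. There $e^{\kappa\ell\log\ell}=1$, so the hypothesis only gives $M\geq 2$, and your numerical condition can fail. This case is trivial, since rank $<1$ forces $\alpha^2-1=0$, and that is how the paper handles it separately.
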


\begin{proof}
    If $\ell=1$, then $M\geq2$, and rank deficiency forces $\alpha^2-1=0$. Thus the conclusion holds in this case. Assume henceforth that $\ell\geq2$, and set
    \begin{equation}\label{eq:determinantCutoff}
        \begin{aligned}
              d_\ell\coloneqq\sum_{r=1}^\ell(r+1)^r,\quad N(\ell)\coloneqq\ell+\frac{4d_\ell\,\ell\log(\ell+1)}{\log2}.
        \end{aligned}
    \end{equation}
    Since
    \begin{align*}
        (\ell+1)^\ell\leq d_\ell
        \leq\sum_{r=1}^\ell(\ell+1)^r
        <\left(1+\frac1\ell\right)(\ell+1)^\ell,
    \end{align*}
    we have $
        \log N(\ell)
        =\ell\log(\ell+1)+O(\log(\ell+1))=O(\ell\log\ell)$. 
    Consequently, $N(\ell)<e^{\kappa\ell\log\ell}$ for some absolute constant $\kappa>0$, and so it suffices to prove the assertion of the lemma for every integer $M>N(\ell)$. Let $\Delta_1(z),\ldots,\Delta_t(z) \in \Z[z]$ be all the $\ell\times\ell$ minors of $A_{\ell,M}(z)$, and let $D(z)$ be their monic greatest common divisor over $\mathbb{Q}[z]$. By Lemma \ref{lem:minorsMonic}, the minors are monic integer polynomials. Write $\Delta_1=DH$ with $H\in\mathbb{Q}[z]$ monic. Clearing denominators and removing the gcd of the coefficients gives $D=A/a$ and $H=B/b$, where $A,B\in\mathbb{Z}[z]$ are primitive (their coefficients have gcd $1$), with positive leading coefficients $a,b$. Then $AB=ab\Delta_1$. By Gauss's lemma, $AB$ is primitive, whereas the coefficients of $ab\Delta_1$ have gcd $ab$, since $\Delta_1$ is monic. Thus $ab=1$, so $a=b=1$, therefore $D(z) \in\mathbb{Z}[z]$.

    If $A_{\ell,M}(\alpha)$ has rank less than $\ell$, all the minors vanish at $\alpha$, and hence $D(\alpha)=0$ since the roots of $D(z)$ are precisely the common roots of $\Delta_1(z),\ldots,\Delta_t(z)$. It remains to show that every nonzero root of $D$ is a root of unity.

    Let $\beta\neq0$ be a root of $D$. Since $D\in\mathbb{Z}[z]$ is monic, $\beta$ is an algebraic integer. Let $m\in\mathbb{Q}[z]$ be its monic minimal polynomial over $\mathbb{Q}$, and write $d=\deg m$. Since $D(\beta)=0$, the minimal polynomial $m$ divides $D$ in $\mathbb{Q}[z]$; the same argument we used earlier applied to this monic factorization gives $m\in\mathbb{Z}[z]$. Every conjugate $\gamma$ of $\beta$ is a root of $m$, hence of $D$, and so is a common root of the minors. Lemma \ref{lem:smallMinors}, applied with $L=M$, therefore gives
    \begin{equation}\label{eq:conjugateUpperBound}
        \log|\gamma|\leq\frac{\ell\log(\ell+1)}{M-\ell}<\frac{\log2}{4d_\ell}.
    \end{equation}
    On the other hand, $D$ divides the minor with rows $2,3,\ldots,\ell+1$. By Lemma \ref{lem:minorsMonic},
    \begin{align*}
        d\leq\deg D\leq\sum_{r=1}^\ell(r+1)^r=d_\ell.
    \end{align*}
    If $d>1$ and $m$ were not cyclotomic, Theorem \ref{thm:dimitrov} would give a conjugate $\gamma$ with
    \begin{align*}
        \log|\gamma|\geq\frac{\log2}{4d}\geq\frac{\log2}{4d_\ell},
    \end{align*}
    contradicting \eqref{eq:conjugateUpperBound}. If $d=1$, then $\beta$ is a nonzero integer, and \eqref{eq:conjugateUpperBound} gives $|\beta|<2$, so $\beta=\pm1$. Thus every nonzero root of $D$ is a root of unity, as required for the cutoff \eqref{eq:determinantCutoff}.
\end{proof}

Lemma \ref{thm:quantitativeRankDeficiency} supplies an explicit choice of $M(\ell)$. To obtain a bound for $P(\ell)$, we make the passage to finite characteristic quantitative. To be precise, we wish to find an explicit $P(\ell)$ such that for every pair of distinct primes $p,q$ with $p>P(\ell)$ and every $\lambda\in\overline{\mathbb{F}}_q^\times$ of order $p$, the matrix $A_{\ell,M(\ell)}(\lambda)$ has rank $\ell$. Equivalently, we must show that any such $\lambda$ causing rank deficiency has prime order at most $P(\ell)$, uniformly in $q$. Such an element $\lambda$ must be a common root of the minors of $A_{\ell, M(\ell)}$ reduced mod $q$. We therefore study the common roots of these polynomials, similar to how we proceeded in the qualitative argument of Subsection \ref{sec:qualitativeLem}. 

Fix $M = M(\ell)$, as given by Lemma \ref{thm:quantitativeRankDeficiency}. Let $\Delta_1(z),\ldots,\Delta_t(z)$ be all the $\ell\times\ell$ minors of $A_{\ell,M}(z)$, with $\Delta_1$ corresponding to the first $\ell$ rows and with the rows of every minor in increasing order. These are monic integer polynomials with $\deg\Delta_1(z)=d_\ell$ and $ \deg\Delta_i(z) \leq n_\ell$ for $2 \leq i \leq t$, where
  \[ d_\ell \coloneqq \sum_{r=1}^\ell (r+1)^r ,\quad n_\ell \coloneqq \sum_{r=1}^\ell (M - \ell + r)^r. \]
Also, similar to how we argued in the proof of Lemma \ref{lem:smallMinors}, expanding a determinant $\Delta_i(z)$ using the Leibniz formula gives $2^\ell\ell!$ monomial terms with coefficient $\pm 1$ before collecting like powers, so the sum of the absolute values of the coefficients of any polynomial $\Delta_i(z)$ is at most $2^\ell\ell!$.

Let $D(z)$ be the monic greatest common divisor of $\Delta_1(z),\ldots, \Delta_t(z)$ in $\mathbb{Q}[z]$. Note that, since $D(z) \mid \Delta_1(z)$, we have $\deg(D(z)) \leq \deg(\Delta_1(z)) = d_\ell$. As in the proof of Lemma \ref{thm:quantitativeRankDeficiency}, $D(z)\in\mathbb{Z}[z]$. That lemma and the choice of $M$ show that every nonzero root of $D$ is a root of unity. Thus
    \begin{align*}
    D(z)=z^a\prod_{m\in\mathcal{E}}\Phi_m(z)^{e_m},
    \end{align*}
for some finite set $\mathcal{E} \subseteq \mathbb{N}$ and positive integers $e_m$. 

Similar to how we proceeded in Subsection \ref{sec:qualitativeLem} we now use a quantitative version of B\'ezout's identity to show that, outside finitely many characteristics $q$, a common root of the reduced generating minors must be a root of the reduction of $D(z)$ mod $q$. In order to do this, 
we will find polynomials $V_1(z),\ldots,V_t(z) \in \Z[z]$ and positive integer $1\leq B\leq \big(2^\ell\ell!\big)^{\,n_\ell+d_\ell}$ such that 
    \begin{equation*}
        \sum_{i=1}^t\Delta_i(z)V_i(z)=B D(z). 
    \end{equation*}
      First, since $D(z) = \gcd(\Delta_1(z),\ldots,\Delta_t(z))$ in $\mathbb{Q}[z]$, by B\'ezout's lemma there are polynomials $U_1(z),\ldots,U_t(z)\in \mathbb{Q}[z]$ such that $D(z)=\sum_i U_i(z)\Delta_i(z)$. We can choose these polynomials so that $\deg (U_i(z))<\deg(\Delta_1(z)) = d_\ell$ for every $i\geq2$. To see this, for each $2 \leq i \leq t$, polynomial division gives $Q_i(z), R_i(z) \in \mathbb{Q}[z]$ such that $ U_i=Q_i\Delta_1+R_i$ and $\deg R_i<\deg\Delta_1=d_\ell$. 
    Substituting into the B\'ezout identity and collecting the multiples of $\Delta_1$, we obtain
    \begin{align*}
        D=\left(U_1+\sum_{i=2}^t Q_i\Delta_i\right)\Delta_1
          +\sum_{i=2}^t R_i\Delta_i.
    \end{align*}
    Thus, replacing $U_1$ by $U_1+\sum_{i=2}^t Q_i\Delta_i$ and $U_i$ by $R_i$ for $i\geq2$ preserves the identity and gives the desired degree bounds $\deg (U_i(z))<d_\ell$ for $2 \leq i \leq t$. With these choices, we have
    \begin{align} \label{bezEq}
        U_1\Delta_1=D-\sum_{i=2}^t U_i\Delta_i,
    \end{align}
Since $\deg(D(z)) \leq \deg(\Delta_1(z)) = d_\ell$, and for all $2\leq i \leq t$ we have $\deg(U_i(z)) < d_\ell$, and $\deg(\Delta_i(z)) \leq n_\ell$, the right-hand side of \eqref{bezEq} has degree less than $n_\ell+d_\ell$. Since $\deg\Delta_1=d_\ell$, it follows that $\deg(U_1(z))<n_\ell$. 

Let $u_{1,0}, u_{1,1},\ldots,u_{1,n_\ell-1} \in \mathbb{Q}$ be the coefficients of $U_1(z)$, i.e.,
  \[ U_1(z) = \sum_{j=0}^{n_\ell-1} u_{1,j}z^j, \]
  and for $2 \leq i \leq t$ let $u_{i,0},u_{i,1},\ldots,u_{i,d_\ell-1} \in \mathbb{Q}$ be the coefficients for $U_i(z)$, i.e.,
    \[ U_i(z) = \sum_{j=0}^{d_\ell-1} u_{i,j}z^j. \]
Since $D(z) = U_1(z) \Delta_1(z) + \cdots + U_t(z) \Delta_t(z)$, it follows that
  \begin{equation} \label{uCoeffProp} D(z) = \sum_{j=0}^{n_\ell -1} u_{1,j} \cdot (z^j \Delta_1(z)) + \sum_{i=2}^t\sum_{j=0}^{d_\ell -1} u_{i,j} \cdot (z^j \Delta_i(z)) . \end{equation}

Let $C = (C_{r,{(i,j)}})$ be a matrix with rows indexed by $0 \leq r < n_\ell + d_\ell$ and columns indexed by pairs $(i,j)$ with $i = 1$ and $0 \leq j < n_\ell$, or with $2 \leq i \leq t$ and $0 \leq j < d_\ell$. Define $C$ so that the entries in its $(i,j)$-column are the coefficients of the polynomial $z^j \Delta_i(z)$. To be precise, let the entries of $C$ be defined to satisfy the following: for $0 \leq j < n_\ell$, 
  \[  z^j \Delta_1(z) = \sum_{r=0}^{n_\ell + d_\ell-1} C_{r, (1,j)} z^r, \]
  and for $2 \leq i \leq t$, for $0 \leq j < d_\ell$,
    \[z^j \Delta_i(z) = \sum_{r=0}^{n_\ell + d_\ell-1} C_{r, (i,j)} z^r. \]

     Every column has Euclidean norm at most $2^\ell\ell!$, since shifting a polynomial does not change the sum of the absolute values of its coefficients.

Let us write $D(z) = \sum_{j=0}^{d_\ell} b_j z^j$ where $b_0,\ldots,b_{d_\ell} \in \mathbb{Z}$. By formula \eqref{uCoeffProp} there exists some vector $x$ whose entries are all of the form $u_{i,j}$ and another vector $\mathbf{b}$ such that $Cx = \mathbf{b}$, where $\mathbf{b} = (b_0,b_1,\ldots,b_{d_\ell}, 0, \ldots, 0)$, where $b_0,b_1,\ldots,b_{d_\ell} \in \mathbb{Z}$ are the coefficients of $D(z)$. 

Let $\rho$ be the rank of $C$. Note that, since $C$ has $n_\ell + d_\ell$ rows, we must have $\rho \leq n_\ell + d_\ell$. Define a new matrix $A$ by choosing $\rho$ linearly independent columns from $C$. It follows that there exists a unique vector $v$ such that $Av = \mathbf{b}$. Define $A_0$ by choosing $\rho $ rows of $A$ so that $A_0$ is invertible. Let $\mathbf{b}_0$ be the vector consisting of the corresponding entries of $\mathbf{b}$, so that $\mathbf{b}_0=A_0v$ and $\mathbf{b}_0$ has integer coordinates. Since $A_0$ has rational entries, its inverse $A_0^{-1}$ must also have rational entries, so, since $v = A_0^{-1} \mathbf{b}_0$ and $\mathbf{b}_0$ has integer coordinates, it follows that the coordinates of $v$ are all rational numbers. By Cramer's rule, the $i$th coordinate of $v$ is given by
  \[ v_i= \frac{\det(A_0^{(i)})}{\det(A_0)}, \]
where the matrix $A_0^{(i)}$ is obtained by replacing the $i$th column of $A_0$ with the vector $\mathbf{b}_0$. Let $B = |\det(A_0)|$. By Hadamard's inequality, $|\det(A_0)|$ is bounded above by the product of lengths of the columns of $A_0$. Since $A_0$ was constructed from the larger matrix $C$, it follows that each column of $A_0$ has length at most $2^\ell \ell !$, thus
  \[ B = |\det(A_0)| \leq (2^\ell \ell!)^\rho \leq (2^\ell \ell!)^{n_\ell + d_\ell}. \]

Then the coordinates of $Bv$ are all integers, and we have $A(Bv) = B\mathbf{b}$. For each column $(i,j)$ of $C$ retained in $A$, let $v_{i,j}$ be the corresponding coordinate of $Bv$, and set $v_{i,j}=0$ for every column not retained. These are integers. Since each column of $C$ consists of the coefficients of $z^j\Delta_i(z)$, we obtain
    \[ B  D(z) = \sum_{j=0}^{n_\ell -1} v_{1,j} \cdot (z^j \Delta_1(z)) + \sum_{i=2}^t\sum_{j=0}^{d_\ell -1} v_{i,j} \cdot (z^j \Delta_i(z)) .\]
  Thus, letting $V_i(z) = \sum_j v_{i,j}z^j$ for $1 \leq i \leq t$, it follows that $V_1(z),\ldots,V_t(z) \in \Z[z]$ and
    \begin{equation} \label{eq:quantitativeBezout} \sum_{i=1}^t\Delta_i(z)V_i(z)=B D(z), \end{equation}
as desired. 

The identity \eqref{eq:quantitativeBezout} now lets us treat the common roots of the reduced minors in two cases, just as in the qualitative argument. We call the primes $q\nmid B$ the \emph{good characteristics} and the primes $q\mid B$ the \emph{exceptional characteristics}. Throughout the following two lemmas, a bar denotes reduction modulo the prime $q$ under consideration.

In a good characteristic, evaluating \eqref{eq:quantitativeBezout} at a common root of the reduced minors forces it to be a root of $\overline D$. The cyclotomic factorization of $D$ then gives a bound on its prime order in terms of $d_\ell$.

\begin{lemma}\label{lem:quantitativeGoodCharacteristics}
    Suppose $q\nmid B$ is prime and $\lambda\in\overline{\mathbb{F}}_q^\times$ is a common root of $\overline{\Delta_1},\ldots,\overline{\Delta_t}$. If $\lambda$ has prime multiplicative order $p$, then $p\leq d_\ell+1$.
\end{lemma}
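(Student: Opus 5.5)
Reduce the identity \eqref{eq:quantitativeBezout} modulo $q$ and evaluate it at $\lambda$. Since the coefficients of $V_i$, $\Delta_i$ and $D$ are integers, reduction modulo $q$ is a ring homomorphism $\mathbb{Z}[z]\to\mathbb{F}_q[z]$. Hence $\sum_i \overline{\Delta_i}(\lambda)\overline{V_i}(\lambda) = \overline{B}\,\overline{D}(\lambda)$. The left side vanishes because $\lambda$ is a common root of the $\overline{\Delta_i}$. Moreover $\overline{B}\neq 0$ in $\mathbb{F}_q$ because $q\nmid B$. Therefore $\overline{D}(\lambda)=0$.

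Next use the cyclotomic factorization $D(z)=z^a\prod_{m\in\mathcal E}\Phi_m(z)^{e_m}$, which holds in $\mathbb{Z}[z]$ and therefore reduces modulo $q$. Since $\lambda\neq 0$ and $\overline{\mathbb{F}}_q$ is a field, $\lambda$ is a root of $\overline{\Phi_m}$ for some $m\in\mathcal E$. Because $\Phi_m\mid z^m-1$ in $\mathbb{Z}[z]$, this gives $\lambda^m=1$. As $\lambda$ has prime order $p$, it follows that $p\mid m$.

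It remains to bound the relevant $m$ in terms of $d_\ell$. The factor $\Phi_m$ divides $D$, so $\varphi(m)=\deg\Phi_m\le \deg D\le d_\ell$. Since $p\mid m$, we have $\varphi(p)=p-1\mid\varphi(m)$ by multiplicativity of $\varphi$ (write $m=p^s m'$ with $p\nmid m'$, so $\varphi(m)=p^{s-1}(p-1)\varphi(m')$). Hence $p-1\le\varphi(m)\le d_\ell$, that is, $p\le d_\ell+1$.

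The only point needing care is the claim $\deg D\le d_\ell$. This was already noted in the text, since $D$ divides $\Delta_1$, which has degree $d_\ell$ by Lemma \ref{lem:minorsMonic}. So there is no real obstacle: the divisibility $(p-1)\mid\varphi(m)$ is the key step, and it turns the degree bound on $D$ into a bound on the order $p$.
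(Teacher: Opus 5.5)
Your proposal is correct and matches the paper's proof step for step. You reduce the B\'ezout identity \eqref{eq:quantitativeBezout} modulo $q$ to get $\overline{D}(\lambda)=0$. The cyclotomic factorization then gives $\lambda^m=1$ with $p\mid m$, and writing $m=p^{s}m'$ yields $p-1\le\varphi(m)\le\deg D\le d_\ell$.
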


\begin{proof}
    Reducing \eqref{eq:quantitativeBezout} modulo $q$ and evaluating at $\lambda$ gives $0=\overline B\,\overline D(\lambda)$. Since $q\nmid B$, we have $\overline D(\lambda)=0$. Since $\lambda\neq0$, the factorization of $D$ shows that $\lambda$ is a root of $\overline{\Phi_m}$ for some $m\in\mathcal{E}$. The divisibility $\Phi_m(z)\mid z^m-1$ persists modulo $q$, so $\lambda^m=1$. Thus $p\mid m$.

    Since $\Phi_m$ divides $D$, we have $\varphi(m)=\deg\Phi_m\leq\deg D\leq d_\ell$. Write $m=p^k u$ with $k\geq1$ and $p\nmid u$. Then
    \begin{align*}
        p-1\leq p^{k-1}(p-1)\varphi(u)=\varphi(m)\leq d_\ell,
    \end{align*}
    which gives $p\leq d_\ell+1$.
\end{proof}

In an exceptional characteristic, the same identity gives no information about $\overline D(\lambda)$. Instead, we use the bound on $B$ to bound $q$, and the degree of the monic minor $\Delta_1$ to bound the extension of $\mathbb{F}_q$ containing $\lambda$. Together these bound its multiplicative order.

\begin{lemma}\label{lem:quantitativeExceptionalCharacteristics}
    Suppose $q\mid B$ is prime and $\lambda\in\overline{\mathbb{F}}_q^\times$ is a common root of $\overline{\Delta_1},\ldots,\overline{\Delta_t}$. Then the multiplicative order of $\lambda$ is at most
    \begin{align*}
        \big(2^\ell\ell!\big)^{d_\ell(n_\ell+d_\ell)}-1.
    \end{align*}
\end{lemma}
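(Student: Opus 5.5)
The plan combines two separate bounds: one on the size of the characteristic $q$ and one on the degree of the field extension containing $\lambda$.

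First, $q$ is bounded. Since $q\mid B$ and $B$ is a positive integer, we have $q\leq B\leq (2^\ell\ell!)^{n_\ell+d_\ell}$, using the bound on $B$ established just before \eqref{eq:quantitativeBezout}.

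Second, the degree of $\lambda$ over $\mathbb{F}_q$ is bounded. By Lemma \ref{lem:minorsMonic}, $\Delta_1$ is monic of degree $d_\ell$ with integer coefficients. Its reduction $\overline{\Delta_1}\in\mathbb{F}_q[z]$ is therefore a nonzero polynomial of degree exactly $d_\ell$. Since $\lambda$ is a root of $\overline{\Delta_1}$, the minimal polynomial of $\lambda$ over $\mathbb{F}_q$ divides $\overline{\Delta_1}$. Its degree $r$ thus satisfies $1\leq r\leq d_\ell$, and $\lambda\in\mathbb{F}_{q^r}$.

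Now we combine the two. Because $\lambda\neq 0$, it lies in the cyclic group $\mathbb{F}_{q^r}^\times$ of order $q^r-1$. Its multiplicative order divides $q^r-1$, so
\begin{align*}
\operatorname{ord}(\lambda)\leq q^r-1\leq B^{d_\ell}-1\leq \big(2^\ell\ell!\big)^{d_\ell(n_\ell+d_\ell)}-1.
\end{align*}
This uses $q\leq B$, $r\leq d_\ell$, and the monotonicity of $x\mapsto x^r$ for $x\geq 1$.

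No step is a real obstacle. The only point needing care is that $\overline{\Delta_1}$ is not the zero polynomial modulo $q$. If it were, every $\lambda$ would be a common root and we would get no degree bound. Monicity from Lemma \ref{lem:minorsMonic} rules this out. It also keeps the degree at exactly $d_\ell$, which is what the exponent bound needs.
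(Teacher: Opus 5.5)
Your proposal is correct and matches the paper's proof essentially step for step. You bound $q\le B\le (2^\ell\ell!)^{n_\ell+d_\ell}$, use the monicity of $\Delta_1$ from Lemma \ref{lem:minorsMonic} to place $\lambda$ in $\mathbb{F}_{q^r}$ with $r\le d_\ell$, and conclude that the order of $\lambda$ divides $q^r-1\le B^{d_\ell}-1$.
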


\begin{proof}
    Since $q\mid B$, we have $q\leq B\leq(2^\ell\ell!)^{n_\ell+d_\ell}$. By Lemma \ref{lem:minorsMonic}, the reduction $\overline{\Delta_1}$ is monic of degree $d_\ell$. The minimal polynomial of $\lambda$ over $\mathbb{F}_q$ divides $\overline{\Delta_1}$, so its degree $r$ satisfies $1\leq r\leq d_\ell$. Hence $\lambda\in\mathbb{F}_{q^r}^\times$, and its multiplicative order divides $q^r-1$. In particular, that order is at most
    \begin{align*}
        q^r-1\leq B^{d_\ell}-1
        \leq\big(2^\ell\ell!\big)^{d_\ell(n_\ell+d_\ell)}-1,
    \end{align*}
    as required.
\end{proof}

We now conclude the quantitative proof of Proposition \ref{prop:uniformTorsionRank} by choosing $P(\ell)$ to cover both cases and checking the asserted growth rates.

\begin{proof}[Quantitative proof of Proposition \ref{prop:uniformTorsionRank}]
    For $\ell\geq2$, take $M=M(\ell)$ to be an integer satisfying $e^{\kappa\ell\log\ell}\leq M\leq2e^{\kappa\ell\log\ell}$, with $\kappa$ as provided by Lemma \ref{thm:quantitativeRankDeficiency}. With $d_\ell$ and $n_\ell$ as defined above, set
    \begin{equation}\label{eq:quantitativeRankParameters}
        P(\ell)\coloneqq\big(2^\ell\ell!\big)^{d_\ell(n_\ell+d_\ell)}.
    \end{equation}
    For $\ell=1$, set $M(1)=P(1)=2$.  These choices have the growth rates asserted in the proposition. Indeed, $M(\ell)=\exp(O(\ell\log\ell))$, and $d_\ell\leq\ell(\ell+1)^\ell=\exp(O(\ell\log\ell))$,  $n_\ell\leq\ell M(\ell)^\ell=\exp(O(\ell^2\log\ell)).$
    Since $\log(2^\ell\ell!)\leq\ell\log(\ell+1)$, it follows that $ \log P(\ell) =d_\ell(n_\ell+d_\ell)\log(2^\ell\ell!) =\exp(O(\ell^2\log\ell))$, 
    and hence $P(\ell)=\exp\!\big(\exp(O(\ell^2\log\ell))\big)$.

    It remains to show that these choices give the full-rank conclusion. Let $p>P(\ell)$ and $q\neq p$ be primes, and let $\lambda\in\overline{\mathbb{F}}_q^\times$ have multiplicative order $p$. If $\ell=1$, the matrix $A_{1, M}(\lambda)$ has the single entry $\lambda^2-1$, which is nonzero because $p>2$. Suppose henceforth that $\ell\geq2$ and, for a contradiction, that $A_{\ell,M}(\lambda)$ has rank less than $\ell$. Then all of its $\ell\times\ell$ minors vanish, so $\lambda$ is a common root of $\overline{\Delta_1},\ldots,\overline{\Delta_t}$.

    If $q\nmid B$, Lemma \ref{lem:quantitativeGoodCharacteristics} gives $p\leq d_\ell+1\leq P(\ell)$, a contradiction. If $q\mid B$, Lemma \ref{lem:quantitativeExceptionalCharacteristics} gives $p\leq P(\ell)-1$, again a contradiction. Therefore $A_{\ell,M}(\lambda)$ has rank $\ell$, proving the proposition.
\end{proof}

\section{Proof of Theorem \ref{main thm}}\label{sec:proofMain}


Let $\mu $ be a distribution on \( \mathbb{F}_p^{k } \) whose support is the set of $k$-tuples $(x,x+y,x+y^2, \ldots, x+y^{k-1})$ with $x \in \mathbb{F}_p$ and $y \in \{0,2,3, \ldots, M_k\}$, with $M_k$ and the prime $p \geq P_k$ as in \eqref{e;MkPk}. The specific weights are fixed in Lemma \ref{l;large exp}. A point of $(\F_p^n)^k$ is written $(x_0,\ldots,x_{k-1})$ with $x_j\in\F_p^n$, and $x_{j,i}\in\F_p$ denotes the $i$-th coordinate of $x_j$; subscripts are used for coordinates because superscripts denote powers. To estimate the number of restricted polynomial progressions in a set $A \subseteq \F^n_p$, we consider the expectation
\begin{equation}
    \Expectation_{(x_0, \ldots, x_{k-1}) \sim \mu^{\otimes n}} 
    \prod_{j=0}^{k-1} 1_A(x_j),
\end{equation}
where $1_A\colon \F^n_p \rightarrow \{0,1\}$ denotes the indicator function of a set $A \subseteq \F^n_p$. 

We call $(x_0,x_1, \ldots, x_{k-1})\in(\F^n_p)^k$ a \textit{valid $k$-tuple} if the $k$ vectors $x_0, \ldots, x_{k-1}$ are not all equal and, for every coordinate $i \in [n]$, the $k$-tuple $(x_{0,i},x_{1,i}, \ldots, x_{k-1,i})$ is in the support of $\mu$. Equivalently, it is a nontrivial restricted $k$-term polynomial progression, with base point $x = x_0$ and step $y \in Y^n\setminus\{\vec 0\}$. Let $\ap = \frac{|A|}{p^n}$ and $f_A (x) = 1_A(x) - \ap$. 

We use the following two lemmas to convert stability bounds for the balanced function $f_A$ into density increment bounds for the associated set $A \subseteq \F^n_p$. 
\begin{lemma}
\label{l;stability}  Let \( \Sigma \) be a finite alphabet with measure \( \pi \). 
For every \( 0< \delta < \frac{1}{2} \), \( n \in \N \), and \(  f \colon \Sigma^n\to  \{0,1\}\),  if \( \operatorname{Stab}_{1- \delta}(f - \mathbb{E}f) > \delta \), then 
\( W_{\leq d} (f- \E f) \geq \frac{\de}{2} \) for every integer \( d \geq \frac{1}{\de} \log \frac{2}{\de} \).
\end{lemma}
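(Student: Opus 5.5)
Set $g \coloneqq f - \E f$ and expand everything in the Efron--Stein decomposition. Since $g^{=\emptyset}=0$ and $N_\rho g = \sum_S \rho^{|S|} g^{=S}$ with the pieces pairwise orthogonal, we have
\begin{align*}
    \operatorname{Stab}_{1-\de}(g) = \sum_{S\neq\emptyset} (1-\de)^{|S|}\,\|g^{=S}\|_2^2 .
\end{align*}
We split this sum at degree $d$. The low-degree part is at most $\sum_{1\le |S|\le d}\|g^{=S}\|_2^2 = W_{\le d}(g)$, since each factor $(1-\de)^{|S|}\le 1$. The high-degree part is at most $(1-\de)^{d+1}\sum_{|S|>d}\|g^{=S}\|_2^2 \le (1-\de)^{d+1}\|g\|_2^2$.

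Next we bound the total mass. Since $f$ takes values in $\{0,1\}$, we get $\|g\|_2^2 = \E f - (\E f)^2 \le \tfrac14 \le 1$. Combining this with the hypothesis gives
\begin{align*}
    \de < \operatorname{Stab}_{1-\de}(g) \le W_{\le d}(g) + (1-\de)^{d+1}\|g\|_2^2 \le W_{\le d}(g) + e^{-\de d}.
\end{align*}

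For $d\ge \frac1\de\log\frac2\de$ we have $e^{-\de d}\le \de/2$. Hence $W_{\le d}(g) > \de - \de/2 = \de/2$, which is the claim.

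There is no real obstacle. The only point to check is the spectral formula $N_\rho g=\sum_S\rho^{|S|}g^{=S}$, which the paper records in Section \ref{sec:notation} for the tensor noise operator with respect to $\pi^{\otimes n}$. One should also confirm that the hypothesis $\de<\tfrac12$ is not actually needed beyond guaranteeing $\log\frac2\de>0$; the bound $\|g\|_2^2\le \tfrac14$ even gives some slack.
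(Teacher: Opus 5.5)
Your proof is correct and is essentially the paper's argument. Both split $\operatorname{Stab}_{1-\delta}(f-\E f)$ at degree $d$ and bound the high-degree tail by $(1-\delta)^{d+1}\|f-\E f\|_2^2\le e^{-\delta d}\le \delta/2$. The paper writes this in the Fourier basis and argues by contrapositive, whereas you use the Efron--Stein pieces directly, which even yields the strict inequality $W_{\le d}(f-\E f)>\delta/2$.
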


\begin{proof}
    To prove the contrapositive, assume 
    \begin{align*}
         W_{\leq d} (f - \E f)  & = \sum_{\substack{\# \xi \leq d \\ \xi \neq 0}} |\wh{f}(\xi)|^2 < \frac{\de}{2}. 
     \end{align*}
     To show that the balanced function is sufficiently stable, consider
     \begin{align*}
         \Stab_{1- \de} (f - \E f)  & = \sum_{\xi \in \{0,\ldots,|\Sigma|-1\}^n} (1- \de)^{\# \xi} \big|\wh{(f- \E f)} (\xi)\big|^2 \\
         &= \sum_{\xi \neq 0 } (1- \de)^{\# \xi} |\wh{f} (\xi)|^2 \\
         &\leq \sum_{\substack{\xi \neq 0; \\ \# \xi \leq d }} (1- \de)^{\# \xi} |\wh{f} (\xi)|^2  + (1- \de)^{d+1 }\sum_{\substack{\xi \neq 0; \\ \# \xi > d }}  |\wh{f} (\xi)|^2. 
     \end{align*}
     By assumption, the function $f- \E f$ does not carry much of its norm on low degree terms. Hence, the right-hand side of the inequality above is 
     \begin{align*}
         &\leq \frac{\de}{2} + (1- \de)^{d+1 } \sum_{\substack{\xi \neq 0; \\ n \geq  \# \xi > d }}  |\wh{f} (\xi)|^2 .
     \end{align*}
    By Plancherel, $\sum_{\xi\neq 0}|\wh{f}(\xi)|^2 = \norm{f-\E f}{2}^2 = \operatorname{Var}(f) \leq \E f^2 \leq 1$, because $f$ takes values in $\{0,1\}$. Together with $1 - \de \leq e^{-\de}$, this shows that the right-hand side of the inequality above is 
    \begin{align*}
         & \leq \frac{\de}{2} + e^{-\de d} \norm{f-\E f}{2}^2 \leq \frac{\de}{2} + e^{-\de d}. 
     \end{align*}
     Since $d \geq \de^{-1} \log (2/\de)$, we have $e^{-\de d} \leq \frac{\de}{2}$, and we conclude that 
     \begin{align*}
         \Stab_{1- \de} (f - \E f) & \leq \frac{\de}{2} + \frac{\de}{2}  \leq \de. 
     \end{align*}
\end{proof}

Given a function $f: \F^{n}_p \rightarrow \C$, a \textit{restriction} of it amounts to choosing an index set $I \subseteq [n]$ and setting the associated coordinates all equal to a point $u \in \F^{I}_p$ while keeping all other coordinates unchanged. We will denote such a restriction of $f$ as follows 
\begin{align*}
    f_{I \rightarrow u}(y)= f(x_I = u, x_{\Ib} = y). 
\end{align*}
In Lemmas \ref{p;restriction}, \ref{l;fiber} and \ref{l;increment}, and in the proof of Theorem \ref{main thm}, the set $S\subseteq[n]$ (or $S_j$) denotes the \emph{free} coordinates and $\Sb=[n]\setminus S$ the complementary set of \emph{frozen} coordinates; thus, for a frozen point $u\in\Sigma^{\Sb}$, the restriction $f_{\Sb\rightarrow u}$ is a function on $\Sigma^{S}$.

\begin{lemma}[cf. {\cite[Lemma 2.4]{BKM-EffectiveRestricted}}] \label{p;restriction}
    Let $\Sigma$ be a finite set with a probability measure $\pi$. Let $f: \Sigma^n \rightarrow \{0, 1\}$, let $d \geq 2$ be an integer, and let $\beta > 0$ be such that $W_{\leq d} (f- \E f) \geq \beta$ and
    \begin{align*}
        \beta \geq 16 \exp \lp - \frac{n}{2d^2} \rp.
    \end{align*}
    Then there is $S \subseteq [n]$ of size $|S| \geq \frac{n}{2d}$ and $u \in \Sigma^{\Sb}$ such that
    \begin{align*}
        \Expectation_{\pi^{\otimes S}} [f_{\Sb \rightarrow u}] \geq \Expectation [f] + \frac{\beta}{16}.
    \end{align*}
\end{lemma}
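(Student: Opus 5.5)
The plan is the standard random-restriction argument of \cite[Lemma 2.4]{BKM-EffectiveRestricted}. Write $g \coloneqq f - \E f$ and choose a random set $S\subseteq[n]$ by including each coordinate independently with probability $\eta \coloneqq 1/d$. Then draw the frozen point $u\sim \pi^{\otimes \Sb}$. For fixed $S$ and $u$, the function $g_{\Sb\to u}$ on $\Sigma^S$ has Efron--Stein components indexed by $T\subseteq S$:
\begin{align*}
    (g_{\Sb\to u})^{=T}(x_T) = \sum_{T'\subseteq \Sb} g^{=T\cup T'}(x_T,u_{T'}).
\end{align*}
Averaging over $u$ and using orthogonality gives
\begin{align*}
    \E_u\norm{(g_{\Sb\to u})^{=T}}{2}^2 = \sum_{T'\subseteq \Sb}\norm{g^{=T\cup T'}}{2}^2 .
\end{align*}
Summing over nonempty $T\subseteq S$ and averaging over $S$ shows that $\E_{S,u}\operatorname{Var}(f_{\Sb\to u})$ is at least $\sum_{U}\Pr[U\cap S\neq\emptyset]\,\norm{g^{=U}}{2}^2$. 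For $1\le |U|\le d$, the probability $\Pr[U\cap S\neq \emptyset]=1-(1-1/d)^{|U|}$ is at least $1-(1-1/d)\ge 1/d$. Actually I want a constant, not $1/d$. So I would instead use the quantity $\Pr[|U\cap S|=1]$ together with the first-order (degree-one) part, which is the quantity that produces a mean shift.

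The cleanest route to a \emph{mean} increment, rather than a variance bound, is the following. For the restricted function, the mean over the free coordinates is $h(u)\coloneqq\E_{\pi^{\otimes S}}f_{\Sb\to u} = \E f + \sum_{\emptyset\neq T\subseteq \Sb} g^{=T}(u_T)$. This $h$ is a function of $u$ with mean $\E f$. Its variance is
\begin{align*}
    \operatorname{Var}_u h = \sum_{\emptyset\neq T\subseteq \Sb}\norm{g^{=T}}{2}^2 .
\end{align*}
Taking $\Pr[i\in\Sb]=1-1/d$ gives $\E_S \operatorname{Var}_u h \ge (1-1/d)^d\, W_{\le d}(g)\ge \beta/4$, since $d\ge 2$. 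Here, as usual, the weight at degree $0$ of $g$ vanishes.

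Since $0\le h\le 1$ and $\E_u h=\E f$, a variance of at least $\beta/8$ forces an upward deviation. Indeed, $\E_u (h-\E f)_+ = \tfrac12 \E_u|h-\E f| \ge \tfrac12\operatorname{Var}_u h$, because $|h-\E f|\le 1$. So some $u$ has $h(u)\ge \E f + \beta/16$.

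It remains to ensure $|S|\ge n/(2d)$. Here $|S|$ is binomial with mean $n/d$. By Chernoff, $\Pr[|S|<n/(2d)]\le \exp(-n/(8d))$, which is at most $\exp(-n/(2d^2))$ once $d\ge 4$. For small $d$ I would adjust the constant, or use Hoeffding in the form $\exp(-2n(1/(2d))^2)=\exp(-n/(2d^2))$, which holds for every $d$. On the bad event the variance is still at most $1$. Hence
\begin{align*}
    \E_S\big[\operatorname{Var}_u h\cdot 1_{|S|\ge n/(2d)}\big]\ge \beta/4 - \exp(-n/(2d^2)) \ge \beta/4-\beta/16 \ge \beta/8,
\end{align*}
using the hypothesis $\beta\ge 16\exp(-n/(2d^2))$. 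So some $S$ with $|S|\ge n/(2d)$ has $\operatorname{Var}_u h\ge\beta/8$, and the deviation step finishes the proof.

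The main obstacle is the bookkeeping in the middle step: checking that the mean of the restriction is exactly the part of the Efron--Stein expansion supported in $\Sb$, and that each set $T$ with $|T|\le d$ survives with probability at least $(1-1/d)^{d}\ge 1/4$. I would verify both directly from the projection formula for $f^{=S}$.
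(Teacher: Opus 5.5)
Your proof is correct and follows the paper's argument. It uses the same random restriction with rate $1/d$, the same Efron--Stein identity for the conditional mean (your bound $\E_S\operatorname{Var}_u h\ge\beta/4$ is the paper's $\E Z_{S,u}^2\ge\ap^2+\beta/4$), and the same Hoeffding bound $\exp(-n/(2d^2))$. The only difference is the last step: you fix a good $S$ and use $\E_u(h-\E f)_+=\tfrac12\E_u|h-\E f|\ge\tfrac12\operatorname{Var}_u h$, whereas the paper thresholds $Z_{S,u}^2$ jointly over $(S,u)$ and then uses $\sqrt{\ap^2+t}-\ap\ge t/2$; both routes give the increment $\beta/16$, and your exploratory first paragraph about $\Pr[U\cap S\neq\emptyset]$ can simply be dropped.
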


\begin{proof}
    Let $\ap = \E [f]$. We choose a random restriction $(S, u)$ as follows: the set $S \subseteq [n]$ contains each $i \in [n]$ independently with probability $\frac{1}{d}$, and, given $S$, the point $u \sim \pi^{\otimes \Sb}$.

    \emph{The size of $S$.} The random variable $|S|$ is a sum of $n$ independent Bernoulli random variables with parameter $\frac{1}{d}$, so $\E|S| = \frac{n}{d}$. By Hoeffding's inequality,
    \begin{align}\label{e;hoeffding}
        \Prob \lp |S| < \frac{n}{2d} \rp \leq \Prob \lp |S| - \E|S| \leq -\frac{n}{2d} \rp \leq \exp \lp - \frac{2 (\frac{n}{2d})^2}{n} \rp = \exp \lp - \frac{n}{2d^2} \rp.
    \end{align}

    \emph{The second moment.} Let $Z_{S, u} \coloneqq \E_{\pi^{\otimes S }} f_{\Sb \rightarrow u} = \E_{x_S \sim \pi^{\otimes S}} f (x_S, u)$. For fixed $S$, the function $u \mapsto Z_{S,u}$ is the conditional expectation of $f$ given the coordinates in $\Sb$, and its Efron--Stein decomposition is $\sum_{S' \subseteq \Sb} f^{=S'}$. By orthogonality,
    \begin{align*}
        \E_{u \sim \pi^{\otimes \Sb}} Z_{S,u}^2 = \sum_{S' \subseteq \Sb} \norm{f^{=S'}}{2}^2 .
    \end{align*}
    Averaging over $S$, and using $f^{=\emptyset} = \ap$, we obtain
    \begin{align*}
        \E Z_{S,u}^2 = \sum_{S' \subseteq [n]} \Prob(S \cap S' = \emptyset) \norm{f^{=S'}}{2}^2
        \geq \ap^2 + \sum_{0 < |S'| \leq d} \lp 1 - \frac{1}{d} \rp^{|S'|} \norm{f^{=S'}}{2}^2 .
    \end{align*}
    Since $(1 - \frac{1}{d})^{|S'|} \geq (1-\frac{1}{d})^{d} \geq \frac14$ for $|S'| \leq d$ and $d \geq 2$, and since $\sum_{0<|S'| \leq d} \norm{f^{=S'}}{2}^2 = W_{\leq d}(f - \E f) \geq \beta$, we conclude that
    \begin{align*}
        \E Z_{S,u}^2 \geq \ap^2 + \frac{\beta}{4}.
    \end{align*}
    
    Since $Z_{S,u}^2$ is bounded below (it is non-negative) and bounded above by $1$, to obtain this lower bound $\ap^2 + \frac{\beta}{4}$, a significant enough amount of probability is allocated to $\ap^2 + \frac{\beta}{ 8}$. That is, 
    \begin{align*}
        \Prob \lp u, S; \hspace{.2in} Z^2_{S,u} \geq \ap^2 + \frac{\beta}{8} \rp \geq \frac{\beta}{8}. 
    \end{align*}
    Indeed, consider the event $\mathcal{G} := \{S, u ; \ Z_{S,u}^2 \geq \ap^2 + \frac{\beta}{8}\}$. We know that $0 \leq Z^2_{S,u} \leq 1$. Hence, 
    \begin{align*}
        \E[Z^2_{S,u}] &= \E[Z^2_{S,u} 1_{\mathcal{G}}] + \E[Z^2_{S,u} 1_{\mathcal{G}^c}] \\
        & \leq 1 \cdot \Prob(\mathcal{G}) + \lp \ap^2 + \frac{\beta}{8} \rp \Prob(\mathcal{G}^c) .
    \end{align*}
    Using the lower bound computed earlier, we get
    \begin{align*}
        \ap^2 + \frac{\beta}{4} & \leq  \Prob(\mathcal{G}) + \lp \ap^2 + \frac{\beta}{8} \rp (1- \Prob(\mathcal{G})) \\
        \implies \frac{\beta}{8} & \leq \lp 1- \lp  \ap^2 + \frac{\beta}{8} \rp\rp \Prob(\mathcal{G})
    \end{align*}
    where $\ap^2 + \frac{\beta}{8} \leq 1$. We thus obtain 
    \begin{align*}
        \frac{\beta}{8} & \leq \Prob(\mathcal{G}) 
    \end{align*}
    as claimed. 
    By \eqref{e;hoeffding}, $|S| \geq \frac{n}{2d}$ with probability at least $1- \exp\lp - \frac{n}{2d^2} \rp$. Since $\Prob(E \cap F) \geq \Prob(E) + \Prob(F) - 1$ for any two events, the hypothesis $\exp\lp - \frac{n}{2d^2} \rp \leq \frac{\beta}{16}$ gives
    \begin{align*}
        \Prob \lp \mathcal{G} \cap \left\{ |S| \geq \frac{n}{2d} \right\} \rp \geq \frac{\beta}{8} - \exp\lp - \frac{n}{2d^2} \rp \geq \frac{\beta}{16} > 0 .
    \end{align*}
    We may therefore fix $(S,u)$ for which both events hold. Since $Z_{S,u} \geq 0$, on the event $\mathcal{G}$ we have $Z_{S,u} \geq \sqrt{\ap^2 + \frac{\beta}{8}}$. Note that $\ap^2 + \frac{\beta}{8} \leq \E[Z^2_{S,u}] \leq 1$. For $0 \leq \ap \leq 1$ and $t \geq 0$ with $\ap^2 + t \leq 1$,
    \begin{align*}
        \sqrt{\ap^2 + t} - \ap = \frac{t}{\sqrt{\ap^2 + t} + \ap} \geq \frac{t}{2},
    \end{align*}
    because both terms in the denominator are at most $1$. Taking $t = \frac{\beta}{8}$ gives
    \begin{align*}
        \Expectation_{\pi^{\otimes S}} [f_{\Sb \rightarrow u}] = Z_{S,u} \geq \ap + \frac{\beta}{16},
    \end{align*}
    with $|S| \geq \frac{n}{2d}$, which concludes the proof.
\end{proof}

\begin{lemma}
\label{l;large exp}
    Let $\mu$ be the distribution over $\F_p^k $ with mass $ \frac{1 }{2p} $ on $(a,\ldots, a)$ for every $a \in \F_p$, and $\frac{ 1}{2p (M_k -1)} $ on each element $\vec{a}$, where $\vec{a} \in \supp \mu \setminus \{(a,\ldots ,a) : a \in \F_p\}$. That is, $\mu = \frac{1}{2} \mu_{\diag } + \frac{1}{2} \mu_{\off}$, where $\mu_{\diag }$ and $\mu_{\off }$ are uniformly distributed conditional on being on the diagonal terms and on the off diagonal terms respectively. If $A$ has no valid $k$-tuple, then there are an index $0 \leq j \leq k-1$ and $1$-bounded functions $f_i: \F_p^n \to \C$, $i \neq j$, such that
\begin{align} 
    \left|\Expectation_{(x_0, \ldots, x_{k-1}) \sim \mu^{\otimes n}}\Big[f_A(x_j)\prod_{\substack{i\neq j \\ 0 \leq i \leq k-1}} f_i(x_i)\Big]\right|  \geq \frac{\ap^k}{2^k-1}  - \frac{\ap}{2^n (2^k -1)}. 
\end{align}
In addition, for each $0\leq j\leq k-1$ the marginal $\mu_j$ of $\mu$ on the $j$-th coordinate is the uniform measure on $\F_p$. Consequently the marginal of $\mu^{\otimes n}$ on the $j$-th block $x_j\in\F_p^n$ is the product measure $\mu_j^{\otimes n}$, which is the uniform measure on $\F_p^n$; in particular, $\E_{x \sim \mu_j^{\otimes n}}[1_A(x)] = \ap$.
\end{lemma}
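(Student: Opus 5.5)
First I will compute the marginals, since that justifies the expectation $\E 1_A(x_j)=\ap$ used in the main estimate. Fix $j\ge 1$ and $b\in\F_p$. The diagonal part contributes mass $\frac{1}{2p}$ to $x_j=b$, namely from the atom $(b,\ldots,b)$. The off-diagonal part contributes one atom for each $y\in\{2,\ldots,M_k\}$: the atom with $x=b-y^j$. By Remark \ref{r;conditions}(b) these atoms are pairwise distinct, so each carries mass $\frac{1}{2p(M_k-1)}$. Summing over the $M_k-1$ values of $y$ gives $\frac{1}{2p}$. Hence $\mu_j(b)=\frac1p$, and the same count works for $j=0$. Since $\mu^{\otimes n}$ is a product measure, the marginal on the $j$-th block is $\mu_j^{\otimes n}$, which is uniform on $\F_p^n$. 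It follows that $\E 1_A(x_j)=\ap$.

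Next I will bound the count from above. Every sample from $\mu^{\otimes n}$ is a tuple whose coordinates all lie in $\supp\mu$. If $A$ has no valid $k$-tuple, then $\prod_j 1_A(x_j)\neq 0$ forces $x_0=\cdots=x_{k-1}$. The event that all coordinates are diagonal has probability $2^{-n}$, and conditional on it the common point $x_0$ is uniform on $\F_p^n$. Therefore
\begin{align*}
\E_{\mu^{\otimes n}}\prod_{j=0}^{k-1}1_A(x_j)=2^{-n}\ap .
\end{align*}

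Now I expand. Write $1_A=\ap+f_A$ in each slot and expand the product into $2^k$ terms indexed by the set $J\subseteq\{0,\ldots,k-1\}$ of slots that receive $f_A$. The term $J=\emptyset$ equals $\ap^k$. For $J\neq\emptyset$, pick $j\in J$; the term is then $\ap^{k-|J|}\E[f_A(x_j)\prod_{i\in J\setminus\{j\}}f_A(x_i)]$. Putting these together,
\begin{align*}
\sum_{\emptyset\ne J}\ap^{k-|J|}\,\E\Big[\prod_{i\in J}f_A(x_i)\Big]=2^{-n}\ap-\ap^k .
\end{align*}
The right side has absolute value at least $\ap^k-2^{-n}\ap$. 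There are $2^k-1$ nonempty $J$, so by pigeonhole some $J$ satisfies
\begin{align*}
\ap^{k-|J|}\Big|\E\prod_{i\in J}f_A(x_i)\Big|\ge \frac{\ap^k-2^{-n}\ap}{2^k-1}.
\end{align*}

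Finally I convert this into the stated form. Since $\ap^{k-|J|}\le1$, dropping that factor only increases the left side, so $|\E\prod_{i\in J}f_A(x_i)|\ge\frac{\ap^k-2^{-n}\ap}{2^k-1}$. Choose $j\in J$. Set $f_i=f_A$ for $i\in J\setminus\{j\}$ and $f_i\equiv1$ for $i\notin J$. All of these are $1$-bounded, because $|f_A|\le\max(\ap,1-\ap)\le1$. This gives the claimed inequality. The main point requiring care is the marginal computation, specifically the distinctness of the atoms, which is exactly what condition (b) supplies; the rest is bookkeeping.
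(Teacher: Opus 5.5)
Your proposal is correct and follows the paper's proof. The all-diagonal event has probability $2^{-n}$, which gives $\E\prod_j 1_A(x_j)=\ap 2^{-n}$. Expanding $1_A=\ap+f_A$ and pigeonholing over the $2^k-1$ nonempty slot sets yields the bound. The marginals are obtained by the same atom count.

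The differences are cosmetic. You drop the factor $\ap^{k-|J|}\le 1$, whereas the paper keeps the constant $\ap$ as a $1$-bounded function. You also compute the marginals first, although the inequality does not use them; they matter only later, when stability is taken with respect to $\mu_j$.
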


\begin{proof}
    Suppose $A$ has no valid $k$-tuple, so that the only tuples contributing to the expectation below are those with $x_0 = \cdots = x_{k-1}$. A point of $\supp\mu$ has all of its entries equal only when $y=0$, so such a tuple occurs exactly when every one of the $n$ coordinates is diagonal, an event of $\mu^{\otimes n}$-probability $2^{-n}$; conditionally on it, $x_0$ is uniform on $\F_p^n$. Hence 
    \begin{equation}\label{eq exp A}
        \begin{split}
            \Expectation_{x \sim \mu^{\otimes n}}  \prod_{j=0}^{k-1} 1_A(x_j ) 
            = \frac{\ap}{2^n}. 
        \end{split}
    \end{equation}
    Writing $1_A = \ap + f_A$ and distributing the resulting factors, we have
    \begin{align*}
        \Expectation_{\mu^{\otimes n}}\prod_{j=0}^{k-1} 1_A(x_j) & = \Expectation_{\mu^{\otimes n}}\prod_{j=0}^{k-1} (\ap + f_A)(x_j)  = \ap^k + \sum_{l=1}^{2^k -1} \Theta_l,
    \end{align*}
    where $\Theta_l$ is the expectation of a product of $k$ functions with the balanced function $f_A$ appearing at least once. By the triangle inequality and \eqref{eq exp A}, 
    \begin{align*}
        \ap^k &\leq \lf\Expectation_{\mu^{\otimes n}}\prod_{j=0}^{k-1} 1_A(x_j) \rf  + \sum_{l=1}^{2^k -1} |\Theta_l|  \leq \frac{\ap}{2^n}  + \sum_{l=1}^{2^k -1} |\Theta_l|.
    \end{align*}
    By the pigeonhole principle, the desired estimate follows. 
    Namely, for some \( 1 \leq l \leq 2^k -1 \), we have 
    \begin{align*}
    |\Theta_l| \geq \frac{\ap^k}{2^k -1} - \frac{\ap}{2^n (2^{k} - 1) }. 
    \end{align*}
    And \( \Theta_l  \) is an expectation of the desired form: one of the \( k \) 
    functions in the expectation equals \( f_A  \), while the remaining functions are all bounded by \( 1 \). 
    Finally, we proceed and compute the marginal distribution for the distribution 
    \begin{align*}
        \mu(x_0, \ldots, x_{k-1}) &= \frac{1}{2 p} 1_{\textrm{diagonal}}(x_0, \ldots, x_{k-1}) + \frac{1 }{2 p (M_k -1)} 1_{\textrm{off diagonal}}(x_0, \ldots, x_{k-1}). 
    \end{align*}
    The marginal on the $0$-th coordinate, evaluated at a point $a$ in $\F_p$, is given by 
    \begin{align*}
        \mu_{0} (a)&= \sum_{\substack{(x_1, \ldots, x_{k-1}) \in \F^{k-1}_p \\ (x_0, \ldots, x_{k-1}) \in \supp \mu }} \frac{1}{2 p}  1_{\textrm{diagonal}} (a,x_1, \ldots, x_{k-1}) + \frac{1 }{2 p(M_k -1)}  1_{\textrm{off diagonal}}(a,x_1, \ldots, x_{k-1}) \\
        &= \frac{1}{2p}  + \frac{ 1 }{2 p (M_k -1)} (M_k -1) = \frac{1}{p} . 
    \end{align*}
    The same computation applies to the $j$-th coordinate for $1\leq j\leq k-1$. The diagonal tuple through $a$ contributes $\frac{1}{2p}$. For each $y\in\{2,\ldots,M_k\}$ there is exactly one off-diagonal tuple with $x_j=a$, namely the one with $x=a-y^j$, and these contribute $\frac{1}{2p}$ in total. Hence $\mu_j(a)=\frac1p$ for all $j$. Since the coordinates of a sample from $\mu^{\otimes n}$ are independent, the block $x_j$ has law $\mu_j^{\otimes n}$, the uniform measure on $\F_p^n$, and so $\E_{x \sim \mu_j^{\otimes n}} 1_A(x) = \ap$. 
\end{proof}

\begin{lemma}[Restrictions preserve the hypothesis]\label{l;fiber}
    Let $A \subseteq \F_p^n$ contain no valid $k$-tuple. Then for every nonempty $S \subseteq [n]$ and every $u \in \F_p^{\Sb}$, the set
    \begin{align*}
        A_{\Sb \rightarrow u} \coloneqq \{ v \in \F_p^S : (x_S = v, \ x_{\Sb} = u) \in A \} \subseteq \F_p^S
    \end{align*}
    contains no valid $k$-tuple.
\end{lemma}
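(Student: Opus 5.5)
The plan is a direct contrapositive argument: a valid $k$-tuple in the fiber lifts to a valid $k$-tuple in $A$ by freezing the coordinates in $\Sb$ with step $0$. Concretely, suppose $(v_0,\ldots,v_{k-1})\in(\F_p^S)^k$ is a valid $k$-tuple contained in $A_{\Sb\rightarrow u}$. For each $j$, define $x_j\in\F_p^n$ by setting $x_{j,i}=v_{j,i}$ for $i\in S$ and $x_{j,i}=u_i$ for $i\in\Sb$. By definition of the fiber, each $x_j$ lies in $A$.

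We then check that $(x_0,\ldots,x_{k-1})$ is a valid $k$-tuple in $\F_p^n$.

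\begin{enumerate}
\item For a free coordinate $i\in S$, the tuple $(x_{0,i},\ldots,x_{k-1,i})=(v_{0,i},\ldots,v_{k-1,i})$ lies in $\supp\mu$, because $(v_0,\ldots,v_{k-1})$ is valid.
\item For a frozen coordinate $i\in\Sb$, the tuple is the diagonal $(u_i,\ldots,u_i)$. This is the point $(x,x+y,\ldots,x+y^{k-1})$ with $x=u_i$ and $y=0$. Since $0\in Y$, it lies in $\supp\mu$.
\item The vectors $x_0,\ldots,x_{k-1}$ are not all equal, because their restrictions to $S$ are the $v_j$, which are not all equal.
\end{enumerate}

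Hence $A$ contains a valid $k$-tuple, which contradicts the hypothesis. In terms of the progression, the lifted step is $y=(y_S,\vec 0_{\Sb})\in Y^n\setminus\{\vec 0\}$, where $y_S$ is the step of the fiber progression.

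There is no real obstacle here. The only point that needs care is noting that $0\in Y$, so that the frozen coordinates give diagonal tuples that lie in $\supp\mu$. This is exactly the design choice highlighted in the introduction.
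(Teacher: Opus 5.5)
Your proposal is correct and follows the paper's proof essentially verbatim: lift the fiber tuple by filling the frozen coordinates with $u$, check the free coordinates via validity, check the frozen coordinates via the diagonal tuple with $y=0\in Y$, and note that non-equality is inherited from the $v_j$. Your closing remark about the lifted step $(y_S,\vec 0)$ matches the remark the paper places right after the lemma.
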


\begin{proof}
    Suppose that $(v_0, \ldots, v_{k-1})$ is a valid $k$-tuple in $A_{\Sb \rightarrow u}$, and define $x_j \in \F_p^n$ by $(x_j)_S = v_j$ and $(x_j)_{\Sb} = u$ for $0 \leq j \leq k-1$. Each $x_j$ lies in $A$. For a coordinate $i \in S$, the $i$-th coordinates of $x_0, \ldots, x_{k-1}$ form a tuple in $\supp \mu$ by assumption. For $i \in \Sb$ they form the diagonal tuple $(u_i, \ldots, u_i)$, which lies in $\supp \mu$ because $0 \in Y$ (take $x = u_i$ and $y = 0$). Finally, the $x_j$ are not all equal, since the $v_j$ are not. Hence $(x_0, \ldots, x_{k-1})$ is a valid $k$-tuple in $A$, a contradiction.
\end{proof}

\begin{remark}
    Lemma \ref{l;fiber} is the reason $0$ is included in $Y$: coordinates frozen by a restriction are filled in with the step $y_i = 0$. In terms of progressions, a nontrivial restricted progression $v, v + P_1(y'), \ldots, v+ P_{k-1}(y')$ in $A_{\Sb \rightarrow u}$, with $y' \in Y^S \setminus \{\vec{0}\}$, lifts to the nontrivial restricted progression in $A$ with base point $(v,u)$ and step $(y', \vec{0}) \in Y^n \setminus \{\vec{0}\}$.
\end{remark}

Theorem \ref{t;inverse} requires the dimension to be large: $n\geq n_{\ast}(\gamma,\ep)$. We first remove that requirement, so that the density increment below is available in every dimension. Throughout, $N_\rho$ denotes the noise operator of Section \ref{sec:notation} in whichever dimension is under discussion.

\begin{proposition} \label{p;allDimensions}
    Let $\mu$ be a distribution over $\Sigma_1 \times \cdots \times \Sigma_k$ that admits no nontrivial abelian embedding and in which every atom has mass at least $\gamma$. Let $\ep>0$, and let $\de = \de(\gamma,\ep)>0$ be as in Theorem \ref{t;inverse}. Then for \emph{every} dimension $n \geq 1$ the following holds: if $1$-bounded functions $f_i \colon \Sigma_i^n \to \C$, $1\leq i\leq k$, satisfy \eqref{B;bigger}, then $\Stab_{1-\de}(f_i) > \de$ for every $i = 1,\ldots,k$.
\end{proposition}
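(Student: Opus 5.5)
The plan is a tensorization (padding) argument. Given a dimension $n < n_\ast = n_\ast(\gamma,\ep)$ and $1$-bounded $f_i\colon \Sigma_i^n\to\C$ satisfying \eqref{B;bigger}, I will choose $m \ge 1$ with $N \coloneqq n+m \geq n_\ast$. Then I define lifted functions $F_i\colon \Sigma_i^{N}\to\C$ by $F_i(x,z) \coloneqq f_i(x)$, where $x\in\Sigma_i^n$ and $z\in\Sigma_i^m$, so each $F_i$ ignores the last $m$ coordinates. Since $\mu^{\otimes N}=\mu^{\otimes n}\otimes\mu^{\otimes m}$, the joint expectation factorizes:
\begin{align*}
\Expectation_{\mu^{\otimes N}}\prod_{i=1}^k F_i(x_i,z_i)=\Expectation_{\mu^{\otimes n}}\prod_{i=1}^k f_i(x_i).
\end{align*}
Thus the $F_i$ are $1$-bounded and satisfy \eqref{B;bigger} in dimension $N\geq n_\ast$. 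Theorem \ref{t;inverse} therefore gives $\Stab_{1-\de}(F_i)>\de$ for every $i$, with the same $\de=\de(\gamma,\ep)$. The point is that $\de$ depends only on $\gamma$ and $\ep$, not on the dimension.

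It then remains to show $\Stab_{1-\de}(F_i)=\Stab_{1-\de}(f_i)$, where stability is taken with respect to $\mu_i^{\otimes N}$ and $\mu_i^{\otimes n}$ respectively. The noise operator $N_\rho$ on $\Sigma_i^N$ is the tensor product of the one-coordinate noise operators. Each one-coordinate operator fixes constants, since $N_\rho 1 = \rho\cdot 1+(1-\rho)\E_\pi 1 = 1$. Hence $N_\rho F_i(x,z) = (N_\rho f_i)(x)$.

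Alternatively, I can use the Efron--Stein description. Because $F_i$ depends only on the first $n$ coordinates, its components are $F_i^{=S}=f_i^{=S}$ for $S\subseteq[n]$ and vanish for all other $S$. The identity $N_\rho F=\sum_S\rho^{|S|}F^{=S}$ then gives the same conclusion. Taking inner products, Fubini over the independent $z$-block yields
\begin{align*}
\langle F_i, N_{1-\de}F_i\rangle_{L^2(\mu_i^{\otimes N})}=\langle f_i,N_{1-\de}f_i\rangle_{L^2(\mu_i^{\otimes n})}.
\end{align*}
This completes the argument for $n<n_\ast$. For $n\ge n_\ast$ the statement is exactly Theorem \ref{t;inverse}.

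There is no real obstacle here. The only point needing care is bookkeeping. The stability in Theorem \ref{t;inverse} must be taken with respect to the marginal $\mu_i$, matching the convention in the definition of $\Stab$. The padded coordinates must be drawn from the same product distribution $\mu^{\otimes m}$, so that the expectation factorizes and the marginals of the padding block are $\mu_i^{\otimes m}$, over which $F_i$ is constant.
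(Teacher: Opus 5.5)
Your proposal is correct and follows essentially the same padding argument as the paper: you lift the $f_i$ to dimension at least $n_\ast$ by adding coordinates they ignore, note that the correlation is unchanged, apply Theorem \ref{t;inverse}, and check that stability is preserved. For that last step the paper uses the Efron--Stein decomposition; your alternative observation that $N_\rho$ is a tensor product fixing constants gives the same conclusion slightly more directly.
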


\begin{proof}
    This is a padding argument. Let $n_{\ast} = n_{\ast}(\gamma, \ep)$ be the dimension threshold of Theorem \ref{t;inverse}. For $n \geq n_{\ast}$ the assertion is Theorem \ref{t;inverse} itself, so we may assume $1 \leq n < n_{\ast}$. Let $f_i \colon \Sigma_i^n \to \C$, $1\leq i \leq k$, be $1$-bounded functions satisfying the hypothesis \eqref{B;bigger}. Pad them with $n_{\ast} - n$ dummy coordinates: define $F_i \colon \Sigma_i^{n_{\ast}} \to \C$ by
    \begin{align*}
        F_i(u_1,\ldots, u_{n_{\ast}}) \coloneqq f_i(u_1,\ldots, u_n), \qquad 1\leq i \leq k,
    \end{align*}
    so that $F_i$ depends only on the first $n$ coordinates, and is again $1$-bounded.

    The functions $F_i$ satisfy \eqref{B;bigger} as well. Indeed, under $\mu^{\otimes n_{\ast}}$ the first $n$ coordinates have law $\mu^{\otimes n}$, and the remaining $n_{\ast}-n$ coordinates, on which no $F_i$ depends, are simply integrated out. Hence
    \begin{align*}
        \Expectation_{(x_1,\ldots,x_k)\sim \mu^{\otimes n_{\ast}}} \prod_{i=1}^k F_i(x_i) 
        = \Expectation_{(x_1,\ldots,x_k)\sim \mu^{\otimes n}} \prod_{i=1}^k f_i(x_i),
    \end{align*}
    and the absolute value of the left-hand side is at least $\ep$. Since the dimension is now $n_{\ast}$, Theorem \ref{t;inverse} applies and gives $\Stab_{1-\de}(F_i) > \de$ for every $i$.

    Finally, stability is unchanged by the padding. Since $F_i$ does not depend on the coordinates outside $[n]$, its Efron--Stein decomposition with respect to $\mu_i^{\otimes n_{\ast}}$ satisfies
    \begin{align*}
        F_i^{=S} = f_i^{=S} \quad \text{for } S \subseteq [n], \qquad F_i^{=S} = 0 \quad \text{otherwise},
    \end{align*}
    and $\norm{F_i^{=S}}{2} = \norm{f_i^{=S}}{2}$ for $S\subseteq[n]$, because the padded coordinates are averaged against a probability measure. Indeed, for $S\not\subseteq [n]$ pick $j \in S$ with $j>n$; the projection formula for $F_i^{=S}$ contains the difference of two conditional expectations that differ only in whether the $j$-th coordinate is fixed, and these agree since $F_i$ is constant in that coordinate. By the formula $\Stab_\rho(g) = \sum_S \rho^{|S|}\norm{g^{=S}}{2}^2$,
    \begin{align*}
        \Stab_{1-\de}(F_i) = \sum_{S \subseteq [n]} (1-\de)^{|S|} \norm{f_i^{=S}}{2}^2 = \Stab_{1-\de}(f_i).
    \end{align*}
    Hence $\Stab_{1-\de}(f_i) > \de$ for every $i$, as claimed.
\end{proof}

We now fix the parameters of the density increment. Given $0 < \ap_0 \leq 1$, set
\begin{align}\label{e;eps0}
    \ep_0 \coloneqq \frac{\ap_0^k}{2(2^k-1)}.
\end{align}
The distribution $\mu$ of Lemma \ref{l;large exp} admits no nontrivial abelian embedding (Proposition \ref{prop:noAbelianEmbeddings} with $\ell = k-1$; see Remark \ref{r;conditions}), and each of its atoms has mass at least $\frac{1}{2p(M_k-1)}$. Let $\de_0 = \de(\ap_0) > 0$ be the parameter that Theorem \ref{t;inverse} provides for this $\mu$ and $\ep = \ep_0$, so that
\begin{align}\label{e;de0}
    \de_0 = \exp \lp - \exp^{(K)} \lp \lp \frac{\ap_0}{C_k }  \rp^{-b} \rp  \rp, \qquad K = k^{O(k)}.
\end{align}
Here $C_k \geq 1$ and $b = b(k,p)\geq 1$ come from Theorem \ref{t;inverse}: that theorem provides $\de = \exp(-\exp^{(K)}(\ep^{-O_\gamma(1)}))$, and $\ep_0 = \ap_0^k/(2(2^k-1))$ by \eqref{e;eps0}, so the exponent of $\ap_0$ is $O_\gamma(k)$. Both $C_k$ and $b$ depend only on $k$ and $p$, since $\gamma = \frac{1}{2p(M_k-1)}$ does, and enlarging either one only decreases $\de_0$, which is harmless. We may therefore assume $C_k \geq k+2$ and $b\geq 1$.
By the formula $N_\rho f = \sum_{S} \rho^{|S|} f^{=S}$, we have $\Stab_\rho(f) = \sum_S \rho^{|S|} \|f^{=S}\|_2^2$, which is nondecreasing in $\rho$. Hence the conclusion $\Stab_{1-\de_0}(f) > \de_0$ of Theorem \ref{t;inverse} persists if $\de_0$ is replaced by any smaller positive number, and we may assume $\de_0 < \frac{1}{2}$. Finally, set
\begin{align}\label{e;d0N}
    d \coloneqq \max \left\{ 2, \ \left\lceil \frac{1}{\de_0} \log \frac{2}{\de_0} \right\rceil \right\}, \qquad
    N(\ap_0) \coloneqq \max \left\{\ \left\lceil 2d^2 \log \frac{32}{\de_0} \right\rceil, \ \left\lceil \log_2 \frac{1}{\ap_0^{k-1}} \right\rceil + 1 \right\},
\end{align}
where $\log$ denotes the natural logarithm. All of these parameters depend only on $\ap_0$, $k$ and $p$; in particular, they do not change along the iteration below.

\begin{lemma}[Density increment]\label{l;increment}
    Let $0 < \ap_0 \leq 1$, and let $\de_0$, $d$ and $N(\ap_0)$ be as in \eqref{e;de0} and \eqref{e;d0N}. Suppose that $A \subseteq \F_p^n$ contains no valid $k$-tuple, has density $\ap = \frac{|A|}{p^n} \geq \ap_0$, and that $n \geq N(\ap_0)$. Then there exist $S \subseteq [n]$ with $|S| \geq \frac{n}{2d}$ and $u \in \F_p^{\Sb}$ such that
    \begin{align*}
        \frac{|A_{\Sb \rightarrow u}|}{p^{|S|}} \geq \ap + \frac{\de_0}{32}.
    \end{align*}
\end{lemma}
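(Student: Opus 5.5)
The plan is to chain the four preceding results: Lemma \ref{l;large exp}, then Proposition \ref{p;allDimensions}, then Lemma \ref{l;stability}, then Lemma \ref{p;restriction}.

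First, since $A$ contains no valid $k$-tuple, Lemma \ref{l;large exp} gives an index $j$ and $1$-bounded $f_i$ ($i\neq j$) with
\begin{align*}
\left|\Expectation_{\mu^{\otimes n}} f_A(x_j)\prod_{i\neq j} f_i(x_i)\right| \geq \frac{\ap^k}{2^k-1} - \frac{\ap}{2^n(2^k-1)}.
\end{align*}
I will show that this is at least $\ep_0$ from \eqref{e;eps0}. The condition $n \geq \lceil \log_2 \ap_0^{-(k-1)}\rceil + 1$ in \eqref{e;d0N} gives $2^{-n} \leq \ap_0^{k-1}/2 \leq \ap^{k-1}/2$. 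Hence $\ap 2^{-n} \leq \ap^k/2$, and the right-hand side is at least $\ap^k/(2(2^k-1)) \geq \ep_0$, using $\ap\geq\ap_0$.

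The function $f_A = 1_A-\ap$ takes values in $[-\ap,1-\ap]$, so it is $1$-bounded. The distribution $\mu$ admits no nontrivial abelian embedding, and its atoms have mass at least $\gamma$. Proposition \ref{p;allDimensions} therefore applies in dimension $n$ with $\ep=\ep_0$. Its stability conclusion for the $j$-th function gives $\Stab_{1-\de_0}(f_A)>\de_0$, with stability taken with respect to $\mu_j$, which is uniform on $\F_p$ by Lemma \ref{l;large exp}. Here I use the earlier observation that shrinking $\de_0$ preserves this conclusion, so $\de_0<1/2$ is allowed. Since $f_A = 1_A - \E 1_A$ under the uniform measure, Lemma \ref{l;stability} with $d$ as in \eqref{e;d0N} gives $W_{\leq d}(1_A-\E 1_A)\geq \de_0/2$.

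Finally, I apply Lemma \ref{p;restriction} with $\beta=\de_0/2$ and $d\geq 2$. Its hypothesis $\beta\geq 16\exp(-n/(2d^2))$ is equivalent to $n\geq 2d^2\log(32/\de_0)$, which is exactly the first term in $N(\ap_0)$. The lemma yields $S$ with $|S|\geq n/(2d)$ and $u$ such that the restriction has density at least $\ap+\beta/16=\ap+\de_0/32$. This density equals $|A_{\Sb\to u}|/p^{|S|}$, since $\pi$ is uniform.

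The only real care point is the first step: the loss term $\ap 2^{-n}$ has to be absorbed using the second term of $N(\ap_0)$, and one must check that the index $j$ plays no role. The latter holds because the conclusion of Proposition \ref{p;allDimensions} covers every $i$, including the coordinate that carries $f_A$.
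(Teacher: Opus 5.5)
Your proposal is correct and matches the paper's proof step for step. You absorb the loss $\ap 2^{-n}$ through the second term of $N(\ap_0)$ to get correlation at least $\ep_0$, then apply Proposition \ref{p;allDimensions}, Lemma \ref{l;stability}, and Lemma \ref{p;restriction} with $\beta=\de_0/2$ on $\F_p$ with the uniform marginal; your remarks that the index $j$ is irrelevant and that shrinking $\de_0$ is harmless are exactly the points the paper relies on.
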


\begin{proof}
    Since $n \geq \left\lceil \log_2 \ap_0^{1-k} \right\rceil + 1$ and $\ap \geq \ap_0$, we have $2^{n-1} \geq \ap^{1-k}$, that is, $\frac{\ap}{2^n} \leq \frac{\ap^k}{2}$. By Lemma \ref{l;large exp}, there exist an index $0 \leq j \leq k-1$ and $1$-bounded functions $f_i$, $i \neq j$, such that
    \begin{align*}
        \left| \Expectation_{(x_0, \ldots, x_{k-1}) \sim \mu^{\otimes n}} \Big[ f_A(x_j) \prod_{\substack{i\neq j \\ 0 \leq i \leq k-1}} f_i(x_i) \Big] \right|
        \geq \frac{\ap^k}{2^k-1} - \frac{\ap}{2^n(2^k-1)} \geq \frac{\ap^k}{2(2^k-1)} \geq \ep_0 .
    \end{align*}
    The balanced function $f_A = 1_A - \ap$ is $1$-bounded, so Theorem \ref{t;inverse}, in the form of Proposition \ref{p;allDimensions}, gives
    \begin{align*}
        \Stab_{1-\de_0}(f_A) > \de_0 .
    \end{align*}
    By Lemma \ref{l;stability}, applied with $\de = \de_0$ and our choice of $d$, the balanced function satisfies \(  W_{\leq d}(f_A) \geq \frac{\de_0}{2} \).  
    A substantial part of its \( L^2 \) norm is carried on low degree terms. 
    
    We now apply Lemma \ref{p;restriction} on $\Sigma = \F_p$ equipped with the uniform measure, which is the common marginal of $\mu$ (Lemma \ref{l;large exp}), with $\beta = \frac{\de_0}{2}$. Its hypothesis $\beta \geq 16 \exp \lp -\frac{n}{2d^2} \rp$ is equivalent to $n \geq 2d^2 \log \frac{32}{\de_0}$, which holds since $n \geq N(\ap_0)$. The lemma yields $S \subseteq [n]$ with $|S| \geq \frac{n}{2d}$ and $u \in \F_p^{\Sb}$ such that the density of $A_{\Sb \rightarrow u}$ in $\F_p^S$ is at least $\ap + \frac{\beta}{16} = \ap + \frac{\de_0}{32}$.
\end{proof}

\begin{proof}[Proof of Theorem \ref{main thm}]
    Let $A \subseteq \F_p^n$ contain no nontrivial restricted $k$-term polynomial progression, that is, no valid $k$-tuple, and let $\ap_0 = \frac{|A|}{p^n} > 0$. Let $\de_0$, $d$ and $N(\ap_0)$ be as in \eqref{e;de0} and \eqref{e;d0N}, and let $J \coloneqq \lfloor 32/\de_0 \rfloor$.

    \emph{The iteration.} Set $A_0 = A$ and $n_0 = n$. Suppose that $A_j \subseteq \F_p^{n_j}$ has been constructed, contains no valid $k$-tuple, and has density $\ap_j \geq \ap_0 + j \frac{\de_0}{32}$. If $n_j \geq N(\ap_0)$, Lemma \ref{l;increment} (which applies since $\ap_j \geq \ap_0$) gives $S_j \subseteq [n_j]$ with $|S_j| \geq \frac{n_j}{2d}$ and a point $u_j \in \F_p^{S_j^{c}}$. We set $A_{j+1} \coloneqq (A_j)_{S_j^{c} \rightarrow u_j} \subseteq \F_p^{S_j}$ and $n_{j+1} \coloneqq |S_j|$. By Lemma \ref{l;fiber}, $A_{j+1}$ contains no valid $k$-tuple, and its density satisfies $\ap_{j+1} \geq \ap_j + \frac{\de_0}{32} \geq \ap_0 + (j+1)\frac{\de_0}{32}$.

    \emph{Termination.} Densities never exceed $1$, while $\ap_0 + (J+1)\frac{\de_0}{32} > 1$. Hence the procedure cannot complete $J+1$ steps, so there is some $j \leq J$ with $n_j < N(\ap_0)$. Since $n_{i+1} \geq \frac{n_i}{2d}$ for every $i < j$,
    \begin{align}\label{e;nbound}
        n \leq (2d)^j n_j < (2d)^{\frac{32}{\de_0}} N(\ap_0).
    \end{align}

    \emph{Quantitative bound.} Since $\de_0 < \frac12$, we have $d \leq \de_0^{-2}$. Recall from \eqref{e;de0} that $C_k \geq k+2$ and $b \geq 1$. Then \eqref{e;de0} gives $\de_0^{-1}\geq \exp\big((\ap_0/C_k)^{-1}\big) \geq \frac{C_k}{\ap_0}$, while $\left\lceil \log_2 \ap_0^{1-k} \right\rceil + 1 \leq k\log_2\frac{1}{\ap_0}+2 \leq \frac{k+2}{\ap_0}$. Hence $\left\lceil \log_2 \ap_0^{1-k} \right\rceil + 1 \leq \de_0^{-1}$. Moreover, $2d^2 \log \frac{32}{\de_0} \leq C \de_0^{-5}$. 
     Hence $N(\ap_0) \leq C \de_0^{-5}$, and \eqref{e;nbound} gives
    \begin{align*}
        n < C \de_0^{-5} \lp 2 \de_0^{-2} \rp^{\frac{32}{\de_0}} \leq \exp \lp C \de_0^{-2} \rp .
    \end{align*}
    Inserting \eqref{e;de0}, we have $\de_0^{-2} = \exp \lp 2 \exp^{(K)} \lp (\ap_0/C_k)^{-b} \rp \rp$, and therefore, after absorbing constants depending only on $k$ and $p$,
    \begin{align*}
        n < \exp^{(K+2)} \lp \lp \frac{\ap_0}{C_k} \rp^{-b} \rp .
    \end{align*}
    Taking $K+2$ logarithms, which is possible once $n$ is large enough that $\log^{(K+2)} n$ is defined (for smaller $n$ the theorem holds trivially after enlarging $C$), we obtain
    \begin{align*}
        \ap_0 \leq C_k \lp \log^{(K+2)} n \rp^{- 1/b } .
    \end{align*}
    Since $K + 2 = k^{O(k)}$, and $C_k$ and $b$ depend only on $k$ and $p$, this is the bound claimed in Theorem \ref{main thm}, with $C = C_k$ and $c = 1/b$.
\end{proof}

\printbibliography[title={References}]

\end{document}